\numberwithin{equation}{section}
\theoremstyle{plain}
\newtheorem{thm}{Theorem}[section]
\newtheorem{lemma}[thm]{Lemma}
\newtheorem{prop}[thm]{Proposition}
\newtheorem{coroll}[thm]{Corollary}
\newtheorem{claim}[thm]{Claim}
\theoremstyle{definition}
\newtheorem{defn}[thm]{Definition}
\newtheorem{conj}[thm]{Conjecture}
\newtheorem{remark}[thm]{Remark}
\newtheorem{ex}[thm]{Example}
\newtheorem{prob}[thm]{Problem}
\newenvironment{customthm}[1]
{\innercustomthm}
{\endinnercustomthm}
\newcommand{\R}{\mathbb{R}}
\newcommand{\Z}{\mathbb{Z}}
\newcommand{\cG}{\mathcal G}
\DeclareMathOperator{\conv}{Conv}
\newcommand{\ehr}{\mathrm{Ehr}}
\def\rank{{\rm rank}}
\def\minfas{{\rm minfas}}
\def\mindj{\nu}
\def\Arb{{\rm Arb}}
\def\Park{{\rm Park}}
\def\park{{\rm park}}
\def\tcQ{{\tilde{\mathcal{Q}}}}
\title{Degrees of interior polynomials and parking function enumerators}
\begin{document}

\author{Tam\'as K\'alm\'an}
\address{Department of Mathematics,
Tokyo Institute of Technology
and International Institute for Sustainability with Knotted Chiral Meta Matter (WPI-SKCM$^2$), Hiroshima University, Japan}
\email{kalman@math.titech.ac.jp}

\author{Lilla T\'othm\'er\'esz}
\address{MTA-ELTE Egerv\'ary Research Group and ELTE Eötvös Loránd University, P\'azm\'any P\'eter s\'et\'any 1/C, Budapest, Hungary}
\email{tmlilla@caesar.elte.hu}



\date{}

\begin{abstract}
    The interior polynomial of a directed graph is defined as the $h^*$-polynomial of the graph's (extended) root polytope, and it displays several attractive properties.
    Here we express its degree 
    in terms of the minimum cardinality of a directed join, and give a formula for the leading coefficient. We present natural generalizations of these results 
    to oriented regular matroids; in the process we also give a facet description for the extended root polytope of an oriented regular matroid.
    
    By duality, our expression for the degree of the interior polynomial implies a formula for the degree of the parking function enumerator 
    of an Eulerian directed graph (which is equivalent to the greedoid polynomial of the corresponding branching greedoid).
    We extend that result 
    to obtain the degree of the parking function enumerator of an arbitrary rooted directed graph in terms of the minimum cardinality of a certain type of feedback arc set. 
\end{abstract}

\maketitle

\section{Introduction}

In this paper we compute the degrees of two interrelated (in fact, in an appropriate sense, dual) graph and matroid polynomials. In particular, we show that they can be expressed using common graph/matroid theoretic concepts.

\subsection{Interior polynomials}

The first type of polynomial we deal with is the interior polynomial of a directed graph. 
This 
notion generalizes some well-studied graph invariants, most notably the specialization $T(x,1)$ of the Tutte polynomial, 
as well as
its extension to hypergraphs \cite{hiperTutte}.  
It is defined as the $h^*$-vector of the (extended) root polytope \cite{smooth_Fano}
\[\tcQ_G=\conv(\{\mathbf{0}\}\cup \{\, \mathbf1_h-\mathbf1_t\mid\overrightarrow{th}\in E\,\})\subset\mathbb R^V\]
associated to the digraph $G=(V,E)$. We denote the interior polynomial of $G$ by $I_G$.
When $G$ is a bidirected graph, $\tcQ_G$ is known as the \emph{symmetric edge polytope} of the underlying undirected graph.
Please see Section \ref{sec:interior} for detailed definitions. 
In this paper we show that the degree and leading coefficient
of the interior polynomial have meaningful connections to the graph structure. 

In a directed graph, we call a cut \emph{directed} if all of its edges point toward the same shore. An edge set in a digraph is called a \emph{directed join}, or \emph{dijoin} for short, if it intersects every directed cut. Our first main result is the following.

\begin{thm} \label{thm:degree_of_interior_poly}
	Let $G=(V,E)$ be a connected digraph.
	Then the degree of the interior polynomial of $G$ is equal to $|V|-1-\mindj(G)$, where $\mindj(G)=\min \{|K| \mid K \subseteq E \text{ is a dijoin of }G\}$.
\end{thm}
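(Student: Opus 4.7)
The plan is to invoke Stanley's reciprocity theorem for $h^*$-polynomials and then to compute the codegree of $\tilde{\mathcal{Q}}_G$ combinatorially, using as a black box the facet description of extended root polytopes promised by the abstract. Recall that for a $d$-dimensional lattice polytope $P$,
\[
\deg h^*(P;t)=d+1-s(P),\qquad s(P):=\min\{k\ge 1:kP^\circ\cap\mathbb{Z}^V\neq\emptyset\}.
\]
Since $G$ is connected, $\dim\tilde{\mathcal{Q}}_G=|V|-1$, so it will suffice to prove $s(\tilde{\mathcal{Q}}_G)=\mindj(G)+1$.

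For the upper bound $s\le\mindj(G)+1$, I will take a minimum dijoin $K$ (with $|K|=\mindj(G)$) and set $x_K:=\sum_{e\in K}(\mathbf{1}_{h(e)}-\mathbf{1}_{t(e)})$. For every directed cut $[\bar S,S]$ (edges from $\bar S$ into $S$), a direct computation gives
\[
\varphi_S(x_K):=\sum_{v\in S}(x_K)_v=|K\cap E(\bar S,S)|,
\]
which lies in $\{1,\dots,\mindj(G)\}$: at least $1$ because $K$ is a dijoin, and at most $\mindj(G)$ because $|K|=\mindj(G)$. According to the facet description, every facet of $(\mindj+1)\tilde{\mathcal{Q}}_G$ is either of the form $\varphi_S\ge 0$ or $\varphi_S\le\mindj+1$ for some directed cut, or (when $\mathbf{0}$ is a vertex) the opposite-origin constraint $\sum_ec_e\le\mindj+1$ on the coefficients of any decomposition. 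Each is strict at $x_K$ by the computation above together with $\sum c_e=\mindj<\mindj+1$ for the natural choice $c=\mathbf{1}_K$, so $x_K\in(\mindj+1)\tilde{\mathcal{Q}}_G^\circ$.

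For the lower bound $s\ge\mindj(G)+1$, I will take $x\in k\tilde{\mathcal{Q}}_G^\circ\cap\mathbb{Z}^V$ and exhibit a dijoin of size $\le k-1$. The standard characterization of the relative interior of a convex hull as the set of points admitting a strict convex combination of all generators produces a real decomposition $x=\sum_ec_e(\mathbf{1}_{h(e)}-\mathbf{1}_{t(e)})$ with $\sum_ec_e<k$, and the total unimodularity of the vertex-edge incidence matrix of $G$ then furnishes such a decomposition with $c$ integer-valued and $\sum_ec_e\le k-1$. For every directed cut $[\bar S,S]$ the interior condition gives the strict inequality $\varphi_S(x)>0$, hence by integrality $\sum_{e\in E(\bar S,S)}c_e\ge 1$; thus $K:=\supp(c)$ meets every directed cut and is itself a dijoin. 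Combined with $|K|\le\sum_ec_e\le k-1$, this forces $\mindj(G)\le|K|\le k-1$, that is, $k\ge\mindj(G)+1$.

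The main obstacle will be correctly deploying the facet description of $\tilde{\mathcal{Q}}_G$ in both directions: the $\varphi_S\ge 0$ side drives the lower bound, forcing the support of an integer interior decomposition to intersect every directed cut, while the $\varphi_S\le 1$ side together with the opposite-origin facet keep $x_K$ inside the relative interior in the upper bound. This is exactly what the earlier matroid-theoretic sections of the paper are designed to supply; a subsidiary delicate point is the integrality upgrade in the lower bound, which hinges on the total unimodularity of the incidence matrix of $G$.
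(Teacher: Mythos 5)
Your proposal is correct in substance, and its lower bound takes a genuinely different route from the paper's. Both arguments start from the same Ehrhart-theoretic reduction (Theorem \ref{thm:degree_of_h*_BR}) and both need the key structural fact that the facets of $\tcQ_G$ through the origin are cut out by directed-cut functionals (Proposition \ref{prop:supporting_hyperplanes_of_Q_G}, part \eqref{partone} of Corollary \ref{cor:facets}); your upper bound is then essentially the paper's. The difference lies in the lower bound: the paper first proves a full characterization of relative interior points of $\tcQ_G$ (Proposition \ref{prop:interior_point_iff_dijoin}: $\mathbf p$ is interior iff $\mathbf p=\sum_{e\in K}\lambda_e\mathbf x_e$ with $K$ a cycle-free dijoin, $\lambda_e>0$, $\sum\lambda_e<1$), whose ``only if'' direction requires the long-arc dijoin analysis and an explicit construction of admissible layerings, and then invokes unimodularity of forest simplices (Lemma \ref{lemma:interior_point_integer_coordinates}). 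You bypass all of that by working directly with the integer point $\mathbf x\in k\tcQ_G^\circ$: strict positivity of every directed-cut functional at $\mathbf x$ forces the support of \emph{any} nonnegative decomposition of $\mathbf x$ to meet every directed cut, and the integrality upgrade yields an integer decomposition with coefficient sum at most $k-1$, whose support is therefore a dijoin of size at most $k-1$. This is a leaner proof of the degree formula; what the paper's heavier Proposition \ref{prop:interior_point_iff_dijoin} additionally buys is the precise description of the interior lattice points of the critical dilate (Proposition \ref{prop:lattice_point_char_in_dilate}), which is what gives the leading-coefficient statement of Theorem \ref{thm:leading_coeff_graph}; your route would need a small extra argument (minimality forces all coefficients equal to $1$ and $K$ a minimum dijoin) to recover that.

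Two points need repair or expansion, neither fatal. First, the facet description you quote is not correct as stated: the facets of $\tcQ_G$ not containing $\mathbf 0$ correspond to admissible layerings $\ell$ (integer potentials with $\ell(h)-\ell(t)\le 1$ on every edge), not to upper bounds $\varphi_S\le 1$ for directed cuts, and ``$\sum_e c_e\le 1$'' is not a linear condition on the point (decompositions are not unique), hence not a facet. Your verification nonetheless establishes exactly what is needed: for any facet not containing $\mathbf 0$, normalizing its hyperplane as $\{\ell\cdot\mathbf x=1\}$ one has $\ell\cdot\mathbf x_e\le 1$ for all $e$ and $\ell\cdot\mathbf 0=0$, so $\ell\cdot x_K\le\nu(G)<\nu(G)+1$; the only nontrivial structural input you truly use is that every facet through $\mathbf 0$ comes from a directed cut, which is part \eqref{partone} of Corollary \ref{cor:facets}. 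Second, the integrality upgrade deserves a sentence rather than an appeal to total unimodularity alone: the polyhedron $\{c\ge 0\mid Ac=\mathbf x\}$ (with $A$ the incidence matrix) is nonempty and pointed, the objective $\mathbf 1\cdot c\ge 0$ attains its minimum at a vertex, and vertices are basic solutions supported on forests, hence integral by Cramer's rule and total unimodularity (the same fact as Lemma \ref{lemma:interior_point_integer_coordinates}); since this minimum is an integer strictly less than $k$, it is at most $k-1$. With these two clarifications your argument is complete.
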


Recall that by a theorem of Lucchesi and Younger \cite{LucchesiYounger}, the quantity $\mindj(G)$ above is also the maximal number of edge-disjoint directed cuts in $G$. Furthermore, if the underlying undirected graph of $G$ is 2-edge connected, then $\mindj(G)$ is the minimum number of edges whose reversal yields a strongly connected orientation of $G$ \cite[Proposition 9.7.1]{Frank_book}.

We also express the leading coefficient of the interior polynomial $I_G$ of $G$.

\begin{defn}
	Let $G$ be a directed graph, and $F$ a subset of its edges. We call the vector $\mathbf z=\sum_{v\in V}z_v\mathbf1_v\in \mathbb{Z}^V$ with
	$$ z_v = |\{e \in F \mid v \text{ is the head of } e\}|- |\{e \in F \mid v \text{ is the tail of } e\}|$$
	the \emph{net degree vector} of $F$. 
\end{defn}

\begin{thm}\label{thm:leading_coeff_graph}
	The leading coefficient of $I_G$ equals the number of vectors that can be obtained as the net degree vector of a minimum cardinality dijoin in $G$.
\end{thm}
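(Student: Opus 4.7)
The plan is to reinterpret the leading coefficient of $I_G$ Ehrhart-theoretically and then to biject interior lattice points of a suitable dilate of $\tcQ_G$ with net degree vectors of minimum dijoins.

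By a classical reciprocity of Stanley, the leading coefficient of the $h^*$-polynomial of a $d$-dimensional lattice polytope $P$ whose $h^*$-polynomial has degree $s$ equals the number of lattice points in the relative interior of $(d+1-s)P$. Applied to $P=\tcQ_G$, which has dimension $d=|V|-1$ for connected $G$, together with Theorem~\ref{thm:degree_of_interior_poly}, which gives $s=|V|-1-\mindj(G)$ and hence $d+1-s=\mindj(G)+1$, this reduces the problem to showing that the number of lattice points in $\mathrm{int}((\mindj(G)+1)\,\tcQ_G)$ equals the number of distinct net degree vectors of minimum dijoins.

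Next I would invoke the facet description of $\tcQ_G$ established earlier in the paper: I expect the facet inequalities to read $\sum_{v\in S}z_v\le 1$, one for each directed cut with head-side $S$, together with the hyperplane $\sum_v z_v=0$ and the symmetric lower bounds. Consequently an integer vector $z$ lies in the relative interior of $(\mindj(G)+1)\tcQ_G$ if and only if $\sum_v z_v=0$ and $|\sum_{v\in S}z_v|\le\mindj(G)$ for every directed cut $S$. The forward half of the bijection is then immediate: for a minimum dijoin $K$, the net degree vector $z_K$ has coordinate sum zero, and for any directed cut with head-side $S$, the quantity $\sum_{v\in S}z_{K,v}$ counts the $K$-edges crossing the cut, which lies in $[1,\mindj(G)]$ since $K$ is a dijoin of size $\mindj(G)$.

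The main obstacle is the backward half: realising every interior lattice point $z$ as the net degree vector of some minimum dijoin. The plan is to first establish the integer decomposition property for $\tcQ_G$ (expected to follow from the existence of a unimodular triangulation of the root polytope), writing $z$ as a $\mathbb Z_{\ge 0}$-combination $\sum_e m_e(\mathbf{1}_h-\mathbf{1}_t)$ supported on at most $\mindj(G)+1$ edge vectors; one then uses that antiparallel edge-pairs and directed cycles contribute $\mathbf{0}$ to the net degree in order to rearrange into a $\{0,1\}$-valued decomposition $z=\sum_{e\in K}(\mathbf{1}_h-\mathbf{1}_t)$. The strict cut inequalities force $K$ to meet every directed cut, so $K$ is a dijoin, and its cardinality equals $\mindj(G)$ by minimality of the latter. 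The delicate step will be converting an arbitrary integer decomposition into a multiplicity-free one of the correct size; this I would attack via a network-flow or uncrossing argument, or via the total dual integrality of the Lucchesi--Younger dijoin system.
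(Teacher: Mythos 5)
Your overall strategy is the paper's: identify the leading coefficient with the number of lattice points in the relative interior of $(\mindj(G)+1)\,\tcQ_G$ (Theorem \ref{thm:leading_coeff_of_h^*}) and then match those points with net degree vectors of minimum dijoins. The problem is the middle step. The facet description you ``expect'' is wrong: by Proposition \ref{prop:supporting_hyperplanes_of_Q_G} and Corollary \ref{cor:facets}, the facets of $\tcQ_G$ through the origin do correspond to (elementary) directed cuts, via $f_{C^*}(\mathbf x)\ge 0$, but the facets \emph{not} through the origin are cut out by admissible layerings $\ell$ (integer functions with $\ell(h)-\ell(t)\le 1$ on every edge, tight edges connected), via $\ell\cdot\mathbf x\le 1$ --- not by upper bounds $\sum_{v\in S}z_v\le 1$ attached to directed cuts. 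Consequently your characterization ``$z$ is an interior lattice point of $(\mindj(G)+1)\tcQ_G$ iff $\sum_v z_v=0$ and the cut sums lie in $[1,\mindj(G)]$'' is too weak, and the count it produces is wrong in general. A concrete failure: let $G$ be the directed path $v_1\to v_2\to v_3$, so $\mindj(G)=2$, the unique minimum dijoin is $E$, and the unique net degree vector is $(-1,0,1)$ (matching leading coefficient $1$, since $\tcQ_G$ is a unimodular simplex). The lattice point $z=(-2,1,1)$ satisfies all your conditions ($z_2+z_3=2$, $z_3=1$, coordinate sum $0$), yet it lies on the facet $\{z_2+2z_3=3\}$ of $3\tcQ_G$ coming from the admissible layering $\ell=(0,1,2)$, so it is not interior and is not a net degree vector. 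Your proposed bijection would therefore overcount. (Also, as literally stated with absolute values, your condition even admits $z=\mathbf 0$, which is a vertex of $\tcQ_G$ here.)

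The forward half is easily repaired --- for a minimum dijoin $K$ one checks $\ell\cdot\bigl(\sum_{e\in K}\mathbf x_e\bigr)\le \mindj(G)<\mindj(G)+1$ directly from $\ell\cdot\mathbf x_e\le 1$ --- but the backward half, which you yourself flag as the delicate step, is exactly where the layering facets are needed and where your IDP/flow plan is left open. The paper's proof closes both halves at once without any integer decomposition or uncrossing: Proposition \ref{prop:interior_point_iff_dijoin} shows that every relative interior point of $\tcQ_G$ can be written as $\sum_{e\in K}\lambda_e\mathbf x_e$ with $K$ a \emph{cycle-free} dijoin, $\lambda_e>0$ and $\sum_e\lambda_e<1$ (the long-arc-dijoin argument is what handles the layering facets), and then Lemma \ref{lemma:interior_point_integer_coordinates} --- unimodularity of $\tcQ_F$ for forests $F$ --- forces all $\lambda_e$ to be integers once the scaled point is a lattice point, hence $\lambda_e=1$ and $|K|=\mindj(G)$; this is Proposition \ref{prop:lattice_point_char_in_dilate}. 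If you want to salvage your route, you must replace your cut-only inequalities with the admissible-layering facets and then supply the conversion to a multiplicity-free dijoin decomposition; the unimodular-forest lemma is the tool that makes that conversion automatic.
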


Let us remark that the constant and linear terms of interior polynomials can also be expressed by using basic Ehrhart theory.
First of all the constant term of any $h^*$-polynomial is $1$.
Furthermore, in all cases when $G$ is connected, the dimension of the extended root polytope is $|V|-1$. This is because, on the one hand, $\dim\tcQ_G\leq|V|-1$ is obvious from the definition; on the other hand, any spanning tree $T$ of $G$ gives rise to a simplex $\tcQ_T$ of dimension $|V|-1$ inside $\tcQ_G$. In fact, the extended root polytope can be triangulated by such simplices, which also happen to be unimodular. Hence $I_G$ agrees with the $h$-vector of the triangulation, and by the relation between $f$- and $h$-vectors,
the coefficient of 
the linear term
in $I_G$ is\footnote{Here $|E|+1$ is the number of generators of $\tcQ_G$, all of whom become vertices in the unimodular triangulation, and $|V|-1$ is the dimension.} 
\(|E|+1-(|V|-1)-1=|E|-|V|+1\), that is the genus (also known as the nullity, cyclomatic number, or simply the first Betti number) of $G$. 

\begin{remark}
\label{rem:higashitani}
In \cite{smooth_Fano}, Higashitani proved that the (extended) root polytope of a strongly connected digraph is reflexive, whence its interior polynomial has degree $|V|-1$ and it is palindromic. 
Thus for a strongly connected digraph, the leading coefficient is also $1$. We can recover these results from Theorems \ref{thm:degree_of_interior_poly} and \ref{thm:leading_coeff_graph}. Indeed, for a strongly connected digraph $G$ we have $\nu(G)=0$, and the unique net degree vector of minimal cardinality dijoins is the zero vector.    
\end{remark}

\begin{figure}
	\begin{tikzpicture}[scale=.60]
	\node [circle,fill,scale=.8,draw] (1) at (1,0) {};
	\node [circle,fill,scale=.8,draw] (2) at (2,1) {};
	\node [circle,fill,scale=.8,draw] (3) at (1,2) {};
	\node [circle,fill,scale=.8,draw] (4) at (0,1) {};
	\node [circle,fill,scale=.8,draw] (5) at (5,0) {};	
	\node [circle,fill,scale=.8,draw] (6) at (6,1) {};	
	\node [circle,fill,scale=.8,draw] (7) at (5,2) {};		
	\node [circle,fill,scale=.8,draw] (8) at (4,1) {};		
	\path [thick,<-,>=stealth] (1) edge [left] node {} (2);
	\path [thick,<-,>=stealth] (1) edge [above] node {} (4);
	\path [thick,<-,>=stealth] (3) edge [below] node {} (2);
	\path [thick,<-,>=stealth] (3) edge [left] node {} (4);
	\path [thick,->,>=stealth] (5) edge [above] node {} (1);
	\path [thick,->,>=stealth] (5) edge [below] node {} (8);
	\path [thick,->,>=stealth] (5) edge [above] node {} (6);
	\path [thick,->,>=stealth] (7) edge [below] node {} (6);
	\path [thick,->,>=stealth] (7) edge [left] node {} (8);
	\path [thick,->,>=stealth] (7) edge [left] node {} (3);
 \draw [gray] (3,-.5) -- (3,2.5);
 \draw [gray] (2,1) circle (.6);
  \draw [gray] (4,1) circle (.6);
  \draw [gray] (6,1.7) arc (90:270:.7);
  \begin{scope}[yscale=1,xscale=-1]
  \draw [gray] (0,1.7) arc (90:270:.7);
  \end{scope}
	\end{tikzpicture}
	\caption{A directed graph with five disjoint directed cuts.}
 \label{fig:bipartite_graph}
\end{figure}

\begin{ex}
\label{ex:kispelda}
    Figure \ref{fig:bipartite_graph} shows a directed graph $G$ whose interior polynomial is $I_G(x)=4x^2+3x+1$. This can be checked directly in various ways,  one of which is mentioned in the next Remark.
    Note that the 
    coefficient of $x$ 
    indeed agrees with $|E|-|V|+1=3$.
    As to the leading coefficient, first notice that the edge set of $G$ 
    is the disjoint union of five directed cuts, whence $\nu(G)\ge5$. In fact $\nu(G)=5$ since it is easy to find
    dijoins of cardinality $5$ (actually, there are $18$ of them). Thus Theorem \ref{thm:degree_of_interior_poly} confirms that $\deg I_G=8-1-5=2$ and by Theorem \ref{thm:leading_coeff_graph}, the leading coefficient should indeed be $4$ because the minimum cardinality dijoins induce the following four net degree vectors: 
    
\noindent
\parbox[c]{.23\linewidth}{
\begin{tikzpicture}[scale=.46]
\tikzstyle{o}=[circle,fill,scale=1.1,draw]
\begin{scope}[opacity=.1]	
 \node [o] (1) at (1,0) {};
	\node [o] (2) at (2,1) {};
	\node [o] (3) at (1,2) {};
	\node [o] (4) at (0,1) {};
	\node [o] (5) at (4.5,0) {};	
	\node [o] (6) at (5.5,1) {};	
	\node [o] (7) at (4.5,2) {};		
	\node [o] (8) at (3.5,1) {};
\end{scope}
\begin{scope}[opacity=.5]
	\path [thick,<-,>=stealth] (1) edge [left] node {} (2);
	\path [thick,<-,>=stealth] (1) edge [above] node {} (4);
	\path [thick,<-,>=stealth] (3) edge [below] node {} (2);
	\path [thick,<-,>=stealth] (3) edge [left] node {} (4);
	\path [thick,->,>=stealth] (5) edge [above] node {} (1);
	\path [thick,->,>=stealth] (5) edge [below] node {} (8);
	\path [thick,->,>=stealth] (5) edge [above] node {} (6);
	\path [thick,->,>=stealth] (7) edge [below] node {} (6);
	\path [thick,->,>=stealth] (7) edge [left] node {} (8);
	\path [thick,->,>=stealth] (7) edge [left] node {} (3);
 \end{scope}
 \node (11) at (1,0) {\tiny $1$};
 \node (12) at (2,1) {\tiny $-1$};
	\node (13) at (1,2) {\tiny $2$};
	\node (14) at (0,1) {\tiny $-1$};
	\node (15) at (4.5,0) {\tiny $-1$};	
	\node (16) at (5.5,1) {\tiny $1$};	
	\node (17) at (4.5,2) {\tiny $-2$};
	\node (18) at (3.5,1) {\tiny $1$};
 \end{tikzpicture}} 
 \parbox[c]{.23\linewidth}{
 \begin{tikzpicture}[scale=.46]
\tikzstyle{o}=[circle,fill,scale=1.1,draw]
\begin{scope}[opacity=.1]	
 \node [o] (1) at (1,0) {};
	\node [o] (2) at (2,1) {};
	\node [o] (3) at (1,2) {};
	\node [o] (4) at (0,1) {};
	\node [o] (5) at (4.5,0) {};	
	\node [o] (6) at (5.5,1) {};	
	\node [o] (7) at (4.5,2) {};		
	\node [o] (8) at (3.5,1) {};
\end{scope}
\begin{scope}[opacity=.5]
 \path [thick,<-,>=stealth] (1) edge [left] node {} (2);
	\path [thick,<-,>=stealth] (1) edge [above] node {} (4);
	\path [thick,<-,>=stealth] (3) edge [below] node {} (2);
	\path [thick,<-,>=stealth] (3) edge [left] node {} (4);
	\path [thick,->,>=stealth] (5) edge [above] node {} (1);
	\path [thick,->,>=stealth] (5) edge [below] node {} (8);
	\path [thick,->,>=stealth] (5) edge [above] node {} (6);
	\path [thick,->,>=stealth] (7) edge [below] node {} (6);
	\path [thick,->,>=stealth] (7) edge [left] node {} (8);
	\path [thick,->,>=stealth] (7) edge [left] node {} (3);
 \end{scope}
 \node (11) at (1,0) {\tiny $2$};
 \node (12) at (2,1) {\tiny $-1$};
	\node (13) at (1,2) {\tiny $1$};
	\node (14) at (0,1) {\tiny $-1$};
	\node (15) at (4.5,0) {\tiny $-1$};	
	\node (16) at (5.5,1) {\tiny $1$};	
	\node (17) at (4.5,2) {\tiny $-2$};		
	\node (18) at (3.5,1) {\tiny $1$};		
 \end{tikzpicture}} 
 \parbox[c]{.23\linewidth}{
  \begin{tikzpicture}[scale=.46]
\tikzstyle{o}=[circle,fill,scale=1.1,draw]
\begin{scope}[opacity=.1]	
 \node [o] (1) at (1,0) {};
	\node [o] (2) at (2,1) {};
	\node [o] (3) at (1,2) {};
	\node [o] (4) at (0,1) {};
	\node [o] (5) at (4.5,0) {};	
	\node [o] (6) at (5.5,1) {};	
	\node [o] (7) at (4.5,2) {};		
	\node [o] (8) at (3.5,1) {};
\end{scope}
\begin{scope}[opacity=.5]
 \path [thick,<-,>=stealth] (1) edge [left] node {} (2);
	\path [thick,<-,>=stealth] (1) edge [above] node {} (4);
	\path [thick,<-,>=stealth] (3) edge [below] node {} (2);
	\path [thick,<-,>=stealth] (3) edge [left] node {} (4);
	\path [thick,->,>=stealth] (5) edge [above] node {} (1);
	\path [thick,->,>=stealth] (5) edge [below] node {} (8);
	\path [thick,->,>=stealth] (5) edge [above] node {} (6);
	\path [thick,->,>=stealth] (7) edge [below] node {} (6);
	\path [thick,->,>=stealth] (7) edge [left] node {} (8);
	\path [thick,->,>=stealth] (7) edge [left] node {} (3);
 \end{scope}
 \node (11) at (1,0) {\tiny $1$};
 \node (12) at (2,1) {\tiny $-1$};
	\node (13) at (1,2) {\tiny $2$};
	\node (14) at (0,1) {\tiny $-1$};
	\node (15) at (4.5,0) {\tiny $-2$};	
	\node (16) at (5.5,1) {\tiny $1$};	
	\node (17) at (4.5,2) {\tiny $-1$};		
	\node (18) at (3.5,1) {\tiny $1$};		
 \end{tikzpicture}} 
 \parbox[c]{.23\linewidth}{
  \begin{tikzpicture}[scale=.46]
\tikzstyle{o}=[circle,fill,scale=1.1,draw]
\begin{scope}[opacity=.1]	
 \node [o] (1) at (1,0) {};
	\node [o] (2) at (2,1) {};
	\node [o] (3) at (1,2) {};
	\node [o] (4) at (0,1) {};
        \node [o] (5) at (4.5,0) {};	
	\node [o] (6) at (5.5,1) {};	
	\node [o] (7) at (4.5,2) {};		
	\node [o] (8) at (3.5,1) {};
\end{scope}
\begin{scope}[opacity=.5]
 \path [thick,<-,>=stealth] (1) edge [left] node {} (2);
	\path [thick,<-,>=stealth] (1) edge [above] node {} (4);
	\path [thick,<-,>=stealth] (3) edge [below] node {} (2);
	\path [thick,<-,>=stealth] (3) edge [left] node {} (4);
	\path [thick,->,>=stealth] (5) edge [above] node {} (1);
	\path [thick,->,>=stealth] (5) edge [below] node {} (8);
	\path [thick,->,>=stealth] (5) edge [above] node {} (6);
	\path [thick,->,>=stealth] (7) edge [below] node {} (6);
	\path [thick,->,>=stealth] (7) edge [left] node {} (8);
	\path [thick,->,>=stealth] (7) edge [left] node {} (3);
 \end{scope}
 \node (11) at (1,0) {\tiny $2$};
 \node (12) at (2,1) {\tiny $-1$};
	\node (13) at (1,2) {\tiny $1$};
	\node (14) at (0,1) {\tiny $-1$};
	\node (15) at (4.5,0) {\tiny $-2$};	
	\node (16) at (5.5,1) {\tiny $1$};	
	\node (17) at (4.5,2) {\tiny $-1$};		
	\node (18) at (3.5,1) {\tiny $1$};		
 \end{tikzpicture}} .
\end{ex}

\begin{remark}
\label{rem:fst}
Let us explain a simple case that inspired Theorem \ref{thm:degree_of_interior_poly}.
The interior polynomial was first introduced by Kálmán \cite{hiperTutte}, for hypergraphs. By the main result of \cite{KP_Ehrhart}, that definition corresponds to the situation when $G=(U,W;E)$ is a bipartite graph with classes $U$ and $W$, and each edge is oriented toward $W$. 
(The graph $G$ of Example \ref{ex:kispelda} is of this type with $|U|=|W|=4$, $\nu(G)=5$, and $\deg I_G=2$. In particular, its interior polynomial may be evaluated by using \cite[Definition 5.3]{hiperTutte}.)

The hypergraphical approach to the interior polynomial $I_G$
easily yields
(see \cite[Proposition 6.1]{hiperTutte})
that (for graphs $G$ as above)
\begin{equation}
\label{eq:hyp}
\deg I_G\le\min\{|U|,|W|\}-1.
\end{equation}
By considering star-cuts, it is also obvious that 
\begin{equation}
    \label{eq:fst}
    \nu(G)\ge\max\{|U|,|W|\}.
\end{equation}
In this special case, Theorem \ref{thm:degree_of_interior_poly} is the statement that the 
defects in 
the inequalities \eqref{eq:hyp} and \eqref{eq:fst} coincide. This realization, made jointly with Andr\'as Frank, predates Theorem \ref{thm:degree_of_interior_poly} and was subsumed by it later. Regrettably, neither \eqref{eq:hyp} nor \eqref{eq:fst} seems to have a generalization to 
wider classes of 
digraphs.
\end{remark}

We note that interior polynomials of hypergraphs have another, rather different generalization, this time to polymatroids instead of directed graphs (see \cite{hiperTutte}, as well as \cite{universal} 
for a two-variable version). As far as we know, the connection to $h^*$-polynomials does not extend to the polymatroid setting, and a degree formula for the interior polynomial of a polymatroid remains unknown.
We do not touch on polymatroids in this paper.
 
By analogy to directed graphs, one can introduce the (extended) root polytope of an oriented regular matroid and define the interior polynomial as its $h^*$-vector. 
(For details, see Section \ref{sec:interior_regular_matroids}.) 
It turns out that all arguments readily generalize to this case, and one is able to obtain analogous statements regarding the degree and leading coefficient.

\begin{thm} \label{thm:degree_of_interior_poly_matroids}
	Let $M$ be an oriented regular matroid of rank $r$.
	Then the degree of the interior polynomial $I_M$ of $M$ is 
	$r-\mindj(M)$, where $\mindj(M)=\min \{|K| \mid K  
	\text{ is a dijoin of }M\}$.
\end{thm}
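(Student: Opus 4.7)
My plan is to transcribe the proof of Theorem \ref{thm:degree_of_interior_poly} into the matroid setting, replacing edges by elements of $M$, spanning trees by bases of $M$, and directed cuts by directed cocircuits of the oriented matroid. Regularity of $M$ furnishes a totally unimodular representation, so each generator $v_e$ is a lattice vector and the basis simplices $\Delta_B = \conv(\{0\} \cup \{v_e : e \in B\})$ are unimodular. As in the graph case, these simplices assemble into a unimodular triangulation of $\tcQ_M$; in particular $\dim \tcQ_M = r$ and $I_M$ agrees with the $h$-polynomial of this triangulation.

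With the triangulation in hand, I would invoke the standard Ehrhart identity
\[
\deg h^*(P) = \dim P + 1 - \mathrm{codeg}(P), \qquad \mathrm{codeg}(P) := \min\{k \geq 1 : \mathrm{int}(kP) \cap \Z^V \neq \emptyset\},
\]
which reduces the theorem to the statement $\mathrm{codeg}(\tcQ_M) = \mindj(M) + 1$.

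For the upper bound $\mathrm{codeg}(\tcQ_M) \leq \mindj(M) + 1$, I would take a minimum dijoin $K$ and set $p_K = \sum_{e \in K} v_e$. By the facet description of $\tcQ_M$ established earlier in the paper, each facet not containing the origin corresponds to a directed cocircuit $D$ via a valid inequality $\langle a_D, x\rangle \leq 1$ with $\langle a_D, v_e\rangle = 1$ for $e \in D$ and $\leq 0$ otherwise. Hence $1 \leq \langle a_D, p_K\rangle \leq |K \cap D| \leq |K| = \mindj(M) < \mindj(M)+1$, so $p_K$ strictly satisfies each such inequality at scale $\mindj(M)+1$; strict satisfaction of the remaining facets through the origin is similar and uses the dijoin property of $K$ together with the matroid-theoretic content of the facet description (essentially, the ``vanishing edges'' of each origin-facet must miss some directed cocircuit, so $K$ cannot be contained in them). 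Consequently $p_K$ lies in $\mathrm{int}((\mindj(M)+1)\tcQ_M)$.

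The lower bound $\mathrm{codeg}(\tcQ_M) \geq \mindj(M)+1$ is where I expect the main difficulty. Given $p \in \mathrm{int}(k\tcQ_M) \cap \Z^V$ with $k \leq \mindj(M)$, the unimodular triangulation yields $p = \sum_e c_e v_e$ with $c_e \in \Z_{\geq 0}$ and $\sum c_e \leq k \leq \mindj(M)$, so $|\supp p| \leq \mindj(M)$. The interior condition, applied jointly to the directed-cocircuit facets and to the facets through the origin, should force $\supp p$ to be a dijoin of size exactly $\mindj(M)$ with each $c_e = 1$, and then force $p$ to lie on a directed-cocircuit facet --- a contradiction. The delicate point is that this all relies on a precise description of the origin-facets (which appear exactly when $M$ fails the oriented-matroid analogue of strong connectivity, cf.\ Remark \ref{rem:higashitani}); the full facet description derived earlier in the paper is precisely what makes this analysis tractable and carries the argument through.
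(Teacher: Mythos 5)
Your overall framework (reduce via Theorem \ref{thm:degree_of_h*_BR} to showing that $(\mindj(M)+1)\tcQ_A$ is the smallest dilate with an interior lattice point) is the same as the paper's, and your upper bound is essentially sound once the facets are correctly identified. But your facet description is backwards: in Proposition \ref{prop:facet_description_matroid} the facets \emph{containing} the origin are the ones cut out by directed cocircuits, via $h_{C^*}(\mathbf x)\ge 0$ with $h_{C^*}(\mathbf a_e)=1$ on $C^*$ and $=0$ (not $\le 0$) off it, while the facets \emph{not} containing the origin come from ``admissible vectors'' $\ell$ (those with $\ell\cdot\mathbf a_e\le 1$ for all $e$ and equality on a spanning set), via $\ell\cdot\mathbf x\le 1$. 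A hyperplane through the origin cannot be normalized to $\langle a_D,x\rangle\le 1$, so the hybrid inequality you write describes neither type.

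The genuine gap is in the lower bound, which is the heart of the theorem and which you leave at ``should force \dots a contradiction''. After your counting step one indeed gets $k=\mindj(M)$, all $c_e=1$, and $\supp$ a minimum dijoin with $\sum_e c_e = k$; but at that point the lattice point strictly satisfies every directed-cocircuit inequality \emph{precisely because} its support is a dijoin, so no contradiction can come from the origin-facets you point to. The contradiction must come from the other family: one has to produce an admissible vector $\ell$ with $\ell\cdot\mathbf a_e=1$ for every $e$ in the support and $\ell\cdot\mathbf a_{e'}\le 1$ elsewhere, which places the point on the boundary facet $\{\ell\cdot\mathbf x=k\}$ of $k\tcQ_A$. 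Constructing such an $\ell$ is exactly what the paper's Proposition \ref{prop:interior_point_iff_dijoin_matroid} does (its ``only if'' direction), and it is not a formal consequence of the facet description: it uses the Farkas lemma for totally unimodular matrices, decomposition of kernel vectors into signed circuits, and the notion of a long-arc dijoin --- including the separate facts (cf.\ Lemmas \ref{lem:min_dijoin_cyclefree} and \ref{lem:min_dijoin_long_arc_free} and Lemma \ref{lem:min_dijoin_circuitfree}) that a representation with coefficient sum exactly $1$ whose support is \emph{not} a long-arc dijoin forces boundary membership, and that when the support \emph{is} a long-arc dijoin the combination can be rewritten along a circuit to push weight onto $\mathbf 0$. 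Without this construction (or an equivalent), your argument cannot rule out interior lattice points in $k\tcQ_A$ for $k\le\mindj(M)$ whose triangulation representation has coefficient sum exactly $k$ (e.g., coming from a simplex of the triangulation not containing $\mathbf 0$), so the proof is incomplete. Relatedly, the paper avoids your triangulation-of-$\tcQ_A$ claim altogether: it gets integrality of the coefficients from Lemma \ref{lemma:interior_point_integer_coordinates_matroid} (Cramer's rule plus total unimodularity) applied to the circuit-free dijoin supplied by Proposition \ref{prop:interior_point_iff_dijoin_matroid}, rather than from a global unimodular triangulation.
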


\begin{thm}\label{thm:leading_coeff_matroid}
	Take an arbitrary totally unimodular representing matrix $A=[\mathbf a_i]_{i\in E}$ 
 for the oriented regular matroid $M$. The leading coefficient of $I_M$ equals the number of vectors that can be obtained as $\sum_{i\in K}\mathbf a_i$, where $K$ is a minimum cardinality dijoin of $M$.
\end{thm}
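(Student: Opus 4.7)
The plan is to reduce the leading coefficient of $I_M$ to a count of interior lattice points in a dilate of $\tcQ_M$ using Ehrhart theory, and then establish a bijective correspondence between those lattice points and the distinct net vectors of minimum dijoins, mirroring how one would prove Theorem~\ref{thm:leading_coeff_graph} in the graph case.

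First, I would invoke the standard Ehrhart identity: for any $n$-dimensional lattice polytope $P$ whose $h^*$-polynomial has degree $d$, the leading coefficient $h^*_d$ equals $|\operatorname{int}((n+1-d)P) \cap \mathbb Z^N|$. Combined with $\dim \tcQ_M = r$ and Theorem~\ref{thm:degree_of_interior_poly_matroids}, this identifies the leading coefficient of $I_M$ with the number of interior lattice points of $(\nu(M)+1)\tcQ_M$. The next ingredient is the standard fact that, since $\tcQ_M = \conv(\{\mathbf 0\} \cup \{\mathbf a_i\}_{i\in E})$, a lattice point $\mathbf z$ lies in the relative interior of $k\tcQ_M$ if and only if $\mathbf z = \sum_i \lambda_i \mathbf a_i$ for some $\lambda_i > 0$ with $\sum_i \lambda_i < k$.

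It then remains to show that the \emph{distinct} values of $\mathbf z_K := \sum_{i\in K}\mathbf a_i$ taken over minimum dijoins $K$ are precisely the interior lattice points of $(\nu(M)+1)\tcQ_M$. For the forward direction, given a minimum dijoin $K$, I would perturb the trivial representation $\lambda_i = \mathbf{1}_{i\in K}$ (of total weight $\nu(M) < \nu(M)+1$) by $\varepsilon \mathbf c$ for a vector $\mathbf c \in \ker A$ with $c_i > 0$ on $E\setminus K$. Such a $\mathbf c$ exists by a Gordan-type alternative applied to the totally unimodular matrix $A$: the absence of a directed cocircuit inside $E\setminus K$ — equivalent to $K$ being a dijoin — rules out the separating covector that would otherwise obstruct $\mathbf c$. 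For the reverse direction, any interior lattice point $\mathbf z$ admits an integer representation $\mathbf z = \sum_{i\in K}\mathbf a_i$ with $K \subseteq E$ and $|K| \le \nu(M)$, which I would obtain from the unimodular triangulation of $\tcQ_M$ by basis-simplices $\tcQ_B$ (the triangulation that already underlies the proof of Theorem~\ref{thm:degree_of_interior_poly_matroids}), followed by a circuit-exchange reduction that eliminates multiplicities. Subtracting this integer representation from a strictly positive interior representation produces a vector in $\ker A$ that is positive on $E\setminus K$, which by the same cocircuit alternative forces $K$ to be a dijoin; then $|K| = \nu(M)$ follows from minimality.

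The main obstacle I anticipate is producing the $\{0,1\}$-valued integer representation in the reverse direction: starting from the integer barycentric expansion furnished by one simplex of the basis-simplex triangulation — in which a given $\mathbf a_i$ can in principle appear with multiplicity greater than one — one must exchange excess weight along circuits of $M$ onto missing elements, while preserving both nonnegativity and the total weight $\nu(M)+1$. This is exactly where the total unimodularity of $A$ (that is, the regular-matroid hypothesis) is crucial, and it is the step that most faithfully parallels the use of directed cycles in the graph-case proof of Theorem~\ref{thm:leading_coeff_graph}.
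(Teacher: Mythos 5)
Your overall Ehrhart frame is the same as the paper's: by Theorem \ref{thm:leading_coeff_of_h^*} and Theorem \ref{thm:degree_of_interior_poly_matroids} the leading coefficient is the number of lattice points in the relative interior of $(\mindj(M)+1)\tcQ_A$, and the remaining task is to identify these with the vectors $\sum_{i\in K}\mathbf a_i$ over minimum dijoins $K$ (the paper's Proposition \ref{prop:lattice_point_char_in_dilate_matroid}). Your forward direction is sound (the Minty/Farkas-type alternative does produce a kernel vector strictly positive on $E-K$ when $K$ is a dijoin, so perturbing the $0$--$1$ representation gives an all-positive representation of total weight below $\mindj(M)+1$). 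The genuine gap is in the reverse direction, at exactly the step you flag as the ``main obstacle.'' From a basis simplex you only get nonnegative integers $\mu_i$, $i\in B$, with $\sum_i\mu_i\le\mindj(M)+1$, and your proposed circuit-exchange reduction is described as \emph{preserving the total weight} $\mindj(M)+1$; but your target is $|K|\le\mindj(M)$, i.e.\ total weight strictly below $\mindj(M)+1$. Nothing in your sketch forces this drop, and it cannot come for free: for a lattice point lying on a facet $\{\ell\cdot\mathbf x=\mindj(M)+1\}$ with $\ell$ admissible, \emph{every} representation has total weight at least $\mindj(M)+1$, so the weight reduction must use interiority --- which in your outline is invoked only afterwards, to deduce that $K$ is a dijoin. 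With weight $\mindj(M)+1$ you would only conclude that interior lattice points are $0$--$1$ sums over dijoins of size at most $\mindj(M)+1$, which does not give the theorem. Moreover, the multiplicity elimination itself interacts with the weight: adding a signed circuit vector changes the total by $|C^+|-|C^-|$ and can push other coefficients above $1$, so it is not a routine clean-up.

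The missing ingredient is precisely the content of the paper's Proposition \ref{prop:interior_point_iff_dijoin_matroid}: if an interior point has a representation of total weight exactly $1$ (after rescaling) whose support $S$ contains no signed circuit $C$ with $C^-\subseteq S$ and $|C^-|>|C^+|$, then the TU Farkas lemma produces an admissible vector $\ell$ with $\ell\cdot\mathbf a_i=1$ on $S$, contradicting interiority; otherwise one exchanges along such a ``long-arc'' circuit to give $\mathbf 0$ positive weight, and a further exchange makes the support circuit-free. Only then does the Cramer-rule/total-unimodularity argument (Lemma \ref{lemma:interior_point_integer_coordinates_matroid}) force the coefficients to be positive integers summing to at most $\mindj(M)$, whence $|K|=\mindj(M)$ and all coefficients equal $1$. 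Your proposal needs this interiority-driven weight reduction (or an equivalent substitute) spelled out; as written it is absent, so the reverse inclusion is not established. A minor point: the paper's proof of Theorem \ref{thm:degree_of_interior_poly_matroids} does not rest on a basis-simplex triangulation but on the same Proposition \ref{prop:interior_point_iff_dijoin_matroid}, so that triangulation is not already available ``for free'' in the way your sketch suggests.
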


Even though Theorems \ref{thm:degree_of_interior_poly_matroids} and \ref{thm:leading_coeff_matroid} imply Theorems \ref{thm:degree_of_interior_poly} and \ref{thm:leading_coeff_graph}, respectively, we include separate proofs for the benefit of those readers who are less interested in oriented matroids. 
 A facet description of the extended root polytope is provided in Proposition \ref{prop:facet_description_matroid}. We remark that D'Al\`\i, Juhnke-Kubitzke and Koch \cite{DAli} found a similar facet description in a special case, namely the one that corresponds to the symmetric edge polytope of a graph. 
 
It is interesting to compare interior polynomials for different orientations of a given undirected graph. In an earlier paper \cite{sym_ribbon} we 
highlighted
a special case of this problem. Namely, 
we conjectured that for a bipartite graph, among its so-called semi-balanced orientations (see Section \ref{sec:interior}), the one in which all edges point toward the same class has a 
coefficientwise minimal interior polynomial. (There are of course two such orientations but their extended root polytopes are isometric and thus their interior polynomials identical.) 
Semi-balanced orientations are relevant because 
their root polytopes are the facets of the symmetric edge polytope of the bipartite graph.
Here we prove a weakened version of this conjecture, addressing only the degree: We show that, in fact among all orientations of a bipartite graph, the one with each edge pointing toward one of the classes has an interior polynomial of minimum degree. 
It remains an open problem which orientations minimize the degree of the interior polynomial among non-bipartite graphs.
On the other hand, in section \ref{sec:deg_min_orientations}
we show an example that among non-bipartite graphs, there does not always exist an orientation coefficientwise minimizing the interior polynomial.

The analogous questions regarding maxima are settled as follows: the degree of the interior polynomial is maximized exactly by the strongly connected orientations (at least as long as the graph is connected and without bridge edges, see section \ref{sec:deg_min_orientations}); co\-effi\-ci\-ent\-wise maximizing orientations do not always exist.

\subsection{Greedoid polynomials and parking function enumerators}

The other 
main object of this paper is the greedoid
polynomial, introduced by Björner, Korte, and Lov\'asz \cite{greedoid}. A special case of this notion is equivalent, via a change of variable, to parking function enumerators.
Greedoid polynomials are set up in such a way that interesting information is contained in their lowest-degree term. 
(The degree of the leading term depends only weakly on the greedoid, namely it is the size of the ground set minus the rank. For exact definitions, see Section \ref{sec:greedoid_and_parking}.)

In the special case of branching greedoids of Eulerian digraphs, the greedoid polynomial is related to the interior polynomial via duality.  Let us 
sketch this connection. For rooted directed graphs, the greedoid polynomial of the induced branching greedoid is equivalent to the enumerator of graph parking functions \cite{SweeHong_parking} (also commonly called $G$-parking functions or generalized parking functions \cite{postnikov-shapiro}) via a 
reversal of the sequence 
of the coefficients. 
Now, for Eulerian digraphs, the parking function enumerator agrees with the interior polynomial of the cographic matroid of the digraph \cite{Eulerian_greedoid}. 
Hence in this case the degree of the interior polynomial corresponds to the degree of the lowest term in
the greedoid polynomial. 

An edge set of a digraph is called a \emph{feedback arc set} if it intersects each directed cycle. Using this dual notion of a dijoin,
Theorem \ref{thm:degree_of_interior_poly_matroids} implies the following. 

\begin{thm}\label{thm:Eulerian_degree}
	The degree of the parking function enumerator of a connected Eulerian digraph $G$ (with any root) is equal to $|E(G)| - |V(G)| +1 - \minfas(G)$, where $\minfas(G)$ denotes the minimum cardinality of a feedback arc set in $G$.
	
	Equivalently, for 
	the branching greedoid of $G$ (with any root),
	the coefficient of $x^i$ in the greedoid polynomial is zero for $i=0, \dots, \minfas(G)-1$, and nonzero for $i=\minfas(G)$.
\end{thm}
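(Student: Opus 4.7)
The plan is to reduce this theorem to Theorem \ref{thm:degree_of_interior_poly_matroids} applied to the oriented cographic matroid $M^*(G)$. The key inputs have already been assembled in the introduction: for an Eulerian digraph the parking function enumerator coincides (not merely up to reversal) with $I_{M^*(G)}$ by \cite{Eulerian_greedoid}, and in particular it does not depend on the choice of root. So the first step is simply to substitute this identification.

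Next, I would translate the right-hand side of Theorem \ref{thm:degree_of_interior_poly_matroids} into graph-theoretic language. The rank of the cographic matroid is $|E(G)|-|V(G)|+1$, which produces the leading terms of the advertised formula. For the dijoin quantity, I would observe that in the oriented cographic matroid the signed cocircuits are precisely the signed cycles of the underlying graph, with signs recording whether an edge agrees with the chosen traversal around the cycle, and so the positive cocircuits are exactly the directed cycles of $G$. A dijoin of $M^*(G)$, being a set that meets every positive cocircuit, therefore coincides with an edge set of $G$ hitting every directed cycle, that is, a feedback arc set; hence $\mindj(M^*(G))=\minfas(G)$. Combining these observations with Theorem \ref{thm:degree_of_interior_poly_matroids} yields $\deg I_{M^*(G)} = |E|-|V|+1-\minfas(G)$, which is the first assertion.

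For the second, equivalent assertion about the greedoid polynomial, I would invoke the coefficient-reversal relation between the greedoid polynomial of the branching greedoid and the parking function enumerator recalled in the introduction. Since the total degree of the greedoid polynomial is $|E|-r=|E|-|V|+1$, the reversal sends the top degree of the parking function enumerator, namely $|E|-|V|+1-\minfas(G)$, to the lowest nonzero degree $\minfas(G)$ of the greedoid polynomial, and vice versa.

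The only slightly non-routine step is the signed-cocircuit/directed-cycle identification, which is where the argument really bites and where the orientation on $M^*(G)$ inherited from $G$ needs to be tracked carefully; once this is in hand, together with the Eulerian identification from \cite{Eulerian_greedoid}, everything else is bookkeeping.
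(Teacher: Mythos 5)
Your proposal is correct and follows essentially the same route as the paper: identify $\park_{G,s}$ with the interior polynomial of the (oriented) cographic matroid via the result of \cite{Eulerian_greedoid} (Theorem \ref{thm:greedoid_poly_is_interior_poly_of_dual}), observe that directed cocircuits of that matroid are the directed cycles of $G$ so dijoins are exactly feedback arc sets, and then apply Theorem \ref{thm:degree_of_interior_poly_matroids} with rank $|E|-|V|+1$, reading off the greedoid-polynomial statement from the coefficient-reversal relation. No gaps; this is the paper's argument, spelled out in slightly more detail.
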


We remark that, in connected Eulerian digraphs, not just the degree but each coefficient of the parking function enumerator/greedoid polynomial is independent of the root \cite{PP16,SweeHong_parking}. None of this holds for general digraphs.

We generalize Theorem \ref{thm:Eulerian_degree} to branching greedoids of all rooted digraphs and, in a certain sense, to all greedoids. (These settings do not correspond to interior polynomials anymore.) To state our result
on general 
directed branching greedoids, 
we define a rooted variant of a feedback arc set.
For a graph $G=(V,E)$ and edge set $F\subseteq E$, we put $G[F]=(V,F)$. For a digraph $G$ and vertex $s$, we call the graph \emph{$s$-root-connected} if each vertex is reachable along a directed path from $s$.

\begin{defn}\label{def:minfas(G,s)}
	Let $G$ be an $s$-root-connected digraph with edge set $E$. We say that a set of edges $F\subseteq E$ is an \emph{$s$-connected feedback arc set} if $G[E-F]$ is an $s$-root-connected acyclic digraph. (Such a set $F$ always exists.)
	We denote by $\minfas(G,s)$ the minimum cardinality of an $s$-connected feedback arc set of $G$.
\end{defn}

With that, our formula is as follows. 

\begin{thm}\label{thm:parking_enum_deg_general_digraph}
	Let $G=(V,E)$ be an $r$-root-connected 
 digraph.
	Then the degree of the parking function enumerator of $G$, rooted at $r$, is equal to $|E| - |V| +1 - \minfas(G,r)$.
	
	Equivalently, in the greedoid polynomial of the branching greedoid of $G$ rooted at $r$, the coefficients of $x^0, \dots, x^{\minfas(G,r)-1}$ are zero, and the coefficient of $x^{\minfas(G,r)}$ is nonzero. 
\end{thm}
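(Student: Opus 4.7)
Since the equivalence between the parking function enumerator and the coefficient-reversed greedoid polynomial is recalled before Theorem~\ref{thm:Eulerian_degree}, it suffices to establish the degree formula for $\sum_{f \in \Park(G,r)} q^{|f|}$, where $|f| = \sum_{v \ne r} f(v)$. As the cographic matroid of $G$ forgets the root $r$, the matroid-duality route used for Theorem~\ref{thm:Eulerian_degree} is no longer available; I would instead give matching upper and lower bounds on $|f|$ directly, via Dhar's burning algorithm. Throughout I adopt the standard convention that $f \in \Park(G,r)$ means $f \colon V \setminus \{r\} \to \mathbb{Z}_{\ge 0}$ is such that every nonempty $S \subseteq V \setminus \{r\}$ contains a vertex $v$ with $f(v) < d^-_{V \setminus S}(v)$.

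For the upper bound, given $f \in \Park(G,r)$ I would run Dhar's burning algorithm: start with $r$ burned and iteratively burn any unburned $v$ satisfying $f(v) < d^-_{\mathrm{burned}}(v)$, obtaining a total order $v_0 = r, v_1, \ldots, v_{n-1}$. Classify edges as \emph{forward} (from a smaller to a larger index) or \emph{backward}, and let $B$ be the set of backward edges. Then $G[E \setminus B]$ consists of forward edges only, hence is acyclic, and since each $v_i$ with $i \ge 1$ receives at least $f(v_i) + 1 \ge 1$ forward edges from previously burned vertices, $G[E \setminus B]$ is also $r$-root-connected. Therefore $B$ is an $s$-connected feedback arc set, and $|B| \ge \minfas(G,r)$. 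Summing the burning inequalities $f(v_i) + 1 \le d^-_{\mathrm{burned}}(v_i)$ over $i \ge 1$ gives
\[ |f| + (|V|-1) \;\le\; |E \setminus B| \;\le\; |E| - \minfas(G,r), \]
whence $|f| \le |E| - |V| + 1 - \minfas(G,r)$.

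For the lower bound I would construct an $f$ achieving this value. Fix a minimum $s$-connected FAS $F$ and a topological order $v_0 = r, v_1, \ldots, v_{n-1}$ of $G[E \setminus F]$; this is possible because $r$ has no incoming edge in $E \setminus F$, since otherwise $r$-root-connectedness of $G[E \setminus F]$ would produce a directed cycle through $r$. A key observation is that every edge of $F$ must be backward in this order: if some $e \in F$ were forward, then adding $e$ back into $G[E \setminus F]$ would preserve both acyclicity and $r$-root-connectedness, so $F \setminus \{e\}$ would be a strictly smaller $s$-connected FAS, contradicting minimality. Now set $f(v_i) := d^-_{E \setminus F}(v_i) - 1$ for $i \ge 1$. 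For any nonempty $S \subseteq V \setminus \{r\}$, take the minimum-index $v_i \in S$; all $(E \setminus F)$-in-neighbors of $v_i$ lie in $\{v_0, \ldots, v_{i-1}\} \subseteq V \setminus S$, so $d^-_{V \setminus S}(v_i) \ge d^-_{E \setminus F}(v_i) > f(v_i)$, confirming that $f \in \Park(G,r)$. Finally, since $d^-_{E \setminus F}(r) = 0$, one computes $|f| = |E \setminus F| - (|V|-1) = |E| - |V| + 1 - \minfas(G,r)$.

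The main conceptual point, and the place where the ``$s$-connected'' half of Definition~\ref{def:minfas(G,s)} really earns its keep, is the interplay between acyclicity and $r$-root-connectedness of $G[E \setminus F]$: acyclicity lets one extract a topological order, while $r$-root-connectedness both forces $r$ to be the unique source of $G[E \setminus F]$ and, in the upper bound, ensures that the backward-edge set produced by Dhar's algorithm qualifies as an $s$-connected FAS rather than a mere FAS. Once this interplay is set up correctly, the rest is formal bookkeeping.
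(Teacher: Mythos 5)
Your argument is correct, but it takes a genuinely different route from the paper. The paper never touches parking functions directly: it first proves the general greedoid statement (Theorem \ref{thm:greedoid_nonzero_terms}), which locates the lowest nonzero coefficient of any greedoid polynomial via restrictions $X|_{E-S}$ with full rank and nonzero constant term, and then combines it with the Björner--Korte--Lov\'asz criterion (Lemma \ref{lemma:constant_term_directed_graphs}: for a root-connected branching greedoid the constant term is nonzero iff the digraph is acyclic); the parking-function formulation then comes out through Chan's identity $\lambda_{G,s}(x)=x^{|E|-|V|+1}\park_{G,s}(x^{-1})$. You instead prove the parking-function half directly by matching bounds: Dhar's burning order turns any $p\in\Park(G,r)$ into an $r$-root-connected acyclic forward subgraph whose complement (the backward edges) is an $s$-connected feedback arc set, giving $|p|\le|E|-|V|+1-\minfas(G,r)$, and a topological order of $G[E-F]$ for a minimum $F$ yields an explicit extremal parking function $p(v)=d^-_{E-F}(v)-1$. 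What each approach buys: the paper's reduction is what makes Theorem \ref{thm:greedoid_nonzero_terms} available in full generality (and reuses the activity machinery already needed elsewhere), whereas your proof is elementary, self-contained at the level of digraphs, and produces an explicit maximizer, but does not generalize beyond branching greedoids. Three small points to tighten: (i) you should state explicitly that the burning algorithm exhausts $V$ \emph{because} $p$ is a parking function (if it stalls with unburned set $S\neq\emptyset$, every $v\in S$ violates the defining condition for $S$); (ii) in the lower bound you should note $d^-_{E-F}(v_i)\ge1$ for $i\ge1$, i.e.\ $p(v_i)\ge0$, which follows from $r$-root-connectedness of $G[E-F]$; (iii) the ``key observation'' that every edge of $F$ is backward in your topological order is true but never used -- the verification that $p\in\Park(G,r)$ and the count $|p|=|E-F|-(|V|-1)$ only need that all edges of $E-F$ are forward and that $r$ is a source of $G[E-F]$.
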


\begin{remark}\label{rem:dir_graphs_constant_term}
    By \cite[Theorem 6.10]{greedoid},
    the constant term of the greedoid polynomial of a (root-connected) rooted digraph $G$ is zero if and only if $G$ contains a directed cycle.     Theorem \ref{thm:parking_enum_deg_general_digraph} strengthens this statement. Indeed, (for an $s$-root-connected digraph $G$) we have $\minfas(G,s) > 0$ if and only if $G$ contains a directed cycle.
\end{remark}

For general greedoids, we show that in order to determine the 
degree of the lowest term
of the greedoid polynomial, it is in fact enough to understand when a greedoid has nonzero constant term in its greedoid polynomial.

\begin{thm}\label{thm:greedoid_nonzero_terms}
	Let $X=\{E,\mathcal{F}\}$ be a greedoid of rank $r$, and for a subset $S\subseteq E$ let $X|_S$ denote the greedoid obtained by restricting $X$ to $S$. Let 
 \begin{multline*}
 k=\min \{|S|\mid S\subseteq E\text{ so that } \rank(X|_{E-S})=r\text{ and the constant term of the}\\ 
 \text{greedoid polynomial of $X|_{E-S}$ is nonzero}\}.
 \end{multline*}
	Then in the greedoid polynomial of $X$, the coefficient of $x^i$ is zero for $i=0, \dots, k-1$, and the coefficient of $x^k$ is nonzero.
\end{thm}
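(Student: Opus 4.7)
The plan is to establish the following identity, which strengthens the theorem and makes it immediate: writing $c(Y) := [x^0]\lambda_Y$ for the constant term of the greedoid polynomial of a greedoid $Y$,
\begin{equation*}
    [x^k]\lambda_X \;=\; \sum_{\substack{S\subseteq E,\;|S|=k,\\ \rank(X|_{E\setminus S})\,=\,r}} c(X|_{E\setminus S}).
\end{equation*}
Granting this identity, together with the non-negativity of each constant term $c(Y)$, the theorem follows at once. Indeed, by the minimality built into the definition of $k$, for every $i < k$ and every $S\subseteq E$ of size $i$ either $\rank(X|_{E\setminus S}) < r$ or $c(X|_{E\setminus S}) = 0$; so every summand on the right-hand side vanishes and $[x^i]\lambda_X = 0$. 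For $i = k$ at least one admissible $S$ appears and contributes strictly positively, which cannot be cancelled by other non-negative summands, giving $[x^k]\lambda_X > 0$.

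To prove the identity I would unfold the Björner--Korte--Lov\'asz definition of $\lambda_X$ as a sum over subsets of $E$, parametrize by $A := E\setminus S$, and regroup the result by $|S| = |E\setminus A|$. The key structural point should be that the contribution of each $A$ depends only on the restricted greedoid $X|_A$ and vanishes unless $\rank(X|_A) = r$; matching terms then identifies that contribution as precisely $c(X|_A)$. An alternative and perhaps cleaner route is induction on $|E|-r$: the base case $|E|=r$ is a tautology (then $k=0$ and $X|_E = X$), and for $|E| > r$ one uses a rank-preserving deletion $X \leadsto X\setminus e$ together with a short recursion for the greedoid polynomial, which encodes whether the element $e$ must be discarded in any minimum-sized admissible $S$.

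The principal obstacle is establishing the identity displayed above; once the greedoid polynomial is shown to assemble out of the constant terms of its rank-preserving restrictions, the rest is the purely formal minimality-and-positivity argument of the first paragraph. This is exactly what places Theorem~\ref{thm:parking_enum_deg_general_digraph} as a direct corollary, via the characterization of when the constant term of a branching greedoid polynomial of a rooted digraph is nonzero recorded in Remark~\ref{rem:dir_graphs_constant_term}.
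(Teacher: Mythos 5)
Your proposed identity is false as stated, and this is the crux of the gap. The quantity $c(X|_{E\setminus S})$ counts bases $B\subseteq E\setminus S$ whose externally active set (computed in $X$, which agrees with $X|_{E\setminus S}$ for such $B$) is \emph{contained in} $S$, not \emph{equal to} $S$; so a single basis of low activity is counted once for every admissible superset $S$ of its active set. Concretely, take $X=U_{1,2}$ viewed as a greedoid on $E=\{a,b\}$ with order $a<b$: then $\lambda_X(t)=t+1$, but for $i=1$ the right-hand side of your identity is $c(X|_{\{a\}})+c(X|_{\{b\}})=2\neq 1$. The correct relation is
\begin{equation*}
\sum_{\substack{S\subseteq E,\;|S|=i,\\ \rank(X|_{E\setminus S})=r}} c(X|_{E\setminus S})\;=\;\sum_{j\le i}\binom{|E|-r-j}{\,i-j\,}\,[x^j]\lambda_X ,
\end{equation*}
so the "regroup the subset expansion" plan cannot produce your equality; it conflates active set equal to $S$ with active set contained in $S$. (Your identity does hold at the minimal $k$ itself, but only \emph{because} the lower coefficients vanish, which is the very thing to be proved, so using it there is circular. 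The alternative deletion-recursion route is not developed and greedoid polynomials do not obey a recursion of the needed form without extra hypotheses.)

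What does survive of your argument is the one-sided inequality $[x^i]\lambda_X\le \sum_{|S|=i}c(X|_{E\setminus S})$, obtained by sending a basis $B$ with $e_X(B)=i$ to the pair $(S,B)$ with $S$ its active set; by minimality of $k$ this already gives the vanishing of the coefficients of $x^0,\dots,x^{k-1}$. For the nonvanishing at $x^k$ you need the converse construction, which is the paper's route: take a witness $S$ with $|S|=k$, $\rank(X|_{E\setminus S})=r$ and $c(X|_{E\setminus S})\neq 0$, pick a basis $B$ of $X|_{E\setminus S}$ with zero external activity there, and conclude $e_X(B)\le k$, hence $e_X(B)=k$ by the vanishing part. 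Both directions rest on the lemma that for a basis $B$ of both $X$ and $X|_{E\setminus S}$, an element of $E\setminus S\setminus B$ is externally active in $X|_{E\setminus S}$ if and only if it is externally active in $X$ (because activity only involves bases inside $B\cup e$); you use this implicitly ("the contribution of each $A$ depends only on $X|_A$") but neither isolate nor prove it. Replacing the false identity by these two inequalities, plus that restriction lemma, recovers the paper's proof.
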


In fact, Theorem \ref{thm:parking_enum_deg_general_digraph} follows directly from Theorem \ref{thm:greedoid_nonzero_terms} and the characterization of Björner, Korte, and Lov\'asz mentioned in Remark \ref{rem:dir_graphs_constant_term}.

The paper is structured as follows.
Section \ref{sec:interior} contains our results on the interior polynomial in the most `classical' case of directed graphs, including the proofs of Theorems \ref{thm:degree_of_interior_poly} and \ref{thm:leading_coeff_graph}.
Section \ref{sec:deg_min_orientations} discusses the comparison of interior polynomials for different orientations of the same graph.
In Section \ref{sec:interior_regular_matroids} we generalize Section \ref{sec:interior} to oriented regular matroids and prove Theorems \ref{thm:degree_of_interior_poly_matroids} and \ref{thm:leading_coeff_matroid}.
Finally, in Section \ref{sec:greedoid_and_parking} we turn to parking function enumerators and greedoid polynomials, and prove
Theorems \ref{thm:Eulerian_degree}, \ref{thm:greedoid_nonzero_terms}, and \ref{thm:parking_enum_deg_general_digraph}.

\subsection{Acknowledgements}

We are grateful to András Frank for stimulating conversations and for pointing out to us Theorem 9.6.12 of his book \cite{Frank_book}.

TK was supported by consecutive Japan Society for the Promotion of Science (JSPS) Grants-in-Aid for Scientific Research C (nos.\ 17K05244 and 23K03108).

LT was supported by the National Research, Development and Innovation Office of Hungary -- NKFIH, grant no.\ 132488, by the János Bolyai Research Scholarship of the Hungarian Academy of Sciences, and by the ÚNKP-22-5 New National Excellence Program of the Ministry for Innovation and Technology, Hungary. This work was also partially supported by the Counting in Sparse Graphs Lendület Research Group of the Alfr\'ed Rényi Institute of Mathematics.

\subsection{Graph notation}
Let $G=(V,E)$ be a directed graph (abbreviated to digraph throughout). We allow loops and multiple edges.
For 
an edge set $S\subseteq E$, we denote by $G[S]$ the digraph with vertex set $V$ and edge set $S$.

By a \emph{subgraph}, in this paper we will mean 
an object as above.
That is, all vertices are automatically included, even if they become isolated in the subgraph. 
We call a subgraph $G[S]$ \emph{spanning} if it has the same number of connected components as $G$. (We note that the term spanning subgraph is sometimes used in a different sense. Our usage correlates with spanning sets in matroid theory.)
In particular for a subgraph of a connected graph, being \emph{connected} and \emph{spanning} mean the same thing. We will mostly use the latter term for trees and the former otherwise. 
When we want to emphasize that connectedness is meant without regard to edge orientations, we will speak of \emph{weakly connected} (sub)graphs.

A non-empty set of edges $C^*\subseteq E(G)$ is called a \emph{cut} if there is a partition $V_0\sqcup V_1 =V(G)$ of the vertex set such that $C^*$ is the set of edges connecting a vertex of $V_0$ and a vertex of $V_1$. The sets $V_0$ and $V_1$ are called the \emph{shores} of the cut. Note that in a connected graph, a cut uniquely determines its two shores.
A cut $C^*$ is called \emph{elementary} if it is minimal among cuts with respect to containment; this is equivalent to the condition that
$E(G)-C^*$, as a subgraph, has exactly two (weakly) connected components.
A cut $C^*$ is \emph{directed} if either each edge of $C^*$ leads from $V_0$ to $V_1$, or each edge of $C^*$ leads from $V_1$ to $V_0$.

In a directed graph, we call a subgraph a \emph{spanning tree} if it is a spanning tree (i.e., connected and cycle-free) when forgetting the orientations. For a spanning tree $T$ and an edge $e\notin T$, the \emph{fundamental cycle} of $e$ with respect to $T$, denoted by $C(T,e)$, is the unique cycle in $T\cup e$. 
Similarly, for an edge $e\in T$, the \emph{fundamental cut} of $e$ with respect to $T$ is the unique (elementary) cut in the complement of $T-e$. We denote it by $C^*(T,e)$.

A set of edges $K\subseteq E(G)$ is called a \emph{directed join} or \emph{dijoin} if for any directed cut $C^*$ of $G$ we have $C^*\cap K \neq\emptyset$. We denote the minimum cardinality of a dijoin of $G$ by $\nu(G)$.
A set of edges $F\subseteq E(G)$ is called a \emph{feedback arc set} if for any directed cycle $C$ we have $C\cap F\neq\emptyset$. We denote the 
minimum cardinality of a feedback arc set of $G$ by $\minfas(G)$.

Let $G$ be a digraph with a fixed root vertex $s$. The graph $G$ is said to be \emph{$s$-root-connected} if each vertex is reachable along a directed path from $s$.
We call a subgraph 
an \emph{arborescence} rooted at $s$ if 
it consists of an $s$-root-connected tree and some isolated points.
A \emph{spanning arborescence} is an arborescence 
without isolated points.
We will denote the set of spanning arborescences of $G$, rooted at $s$, by $\Arb(G,s)$.
We note that a digraph $G$, rooted at $s$, has a spanning arborescence (rooted at $s$) if and only if it is $s$-root-connected. 

\section{The degree of the interior polynomial of a digraph}
\label{sec:interior}

Let $G=(V,E)$ be a directed graph. 
To an edge $e=\overrightarrow{th}\in E$, let us associate the vector $\mathbf{x}_e=\mathbf{1}_h-\mathbf{1}_t\in \mathbb{R}^V$. (Here $t,h\in V$ and $\mathbf{1}_t,\mathbf{1}_h\in\mathbb R^V$ are the corresponding generators.) The  
first of the following two notions
was introduced by Higashitani while studying smooth Fano polytopes \cite{smooth_Fano}.

\begin{defn}\label{def:root_polytope}
The \emph{root polytope} of a directed graph $G=(V,E)$ is the convex hull
\[\mathcal{Q}_G=\conv\{\,\mathbf{x}_e\mid e\in E\,\}\subset\mathbb R^V.\]
The \emph{extended root polytope} of $G$ is
\[\tilde{\mathcal{Q}}_G=\conv(\{\mathbf{0}\}\cup \{\, \mathbf{x}_e\mid e\in E\,\})\subset\mathbb R^V.\]
\end{defn}

Let us recall the following well known facts. 
See, for example, \cite[Lemma 3.5 and Corollary 3.6]{semibalanced} for details and proofs.

\begin{lemma}
    \label{lem:unimodular_simplex}
For $G=T$ a tree (or forest), both $\mathcal{Q}_T$ and $\tilde{\mathcal{Q}}_T$ are simplices, moreover, they are unimodular with respect to the lattice $\Z^V$. 
\end{lemma}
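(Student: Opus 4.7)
The plan is to reduce everything to a single inductive statement about forests, pivoting on a pendant edge. I will handle $\tilde{\mathcal{Q}}_T$ first, because $\mathbf{0}$ is one of its vertices and unimodularity with respect to $\Z^V$ then becomes the clean statement that $\{\mathbf{x}_e : e \in E(T)\}$ is a $\Z$-basis of the sublattice $\Lambda_T := \mathrm{span}_{\R}\{\mathbf{x}_e : e \in E(T)\} \cap \Z^V$. Once that is established, the statement for $\mathcal{Q}_T$ follows because $\mathcal{Q}_T$ is the facet of $\tilde{\mathcal{Q}}_T$ opposite $\mathbf{0}$, and any facet of a unimodular simplex is itself a unimodular simplex in its own affine hull: pick any edge $e_1 \in E(T)$ as the apex, and observe that the shifts $\{\mathbf{x}_e - \mathbf{x}_{e_1} : e \neq e_1\}$ form a $\Z$-basis of the affine lattice $\mathrm{aff}(\mathcal{Q}_T)\cap\Z^V$ translated to the origin, which one deduces from the $\Z$-basis property already in hand for $\tilde{\mathcal{Q}}_T$.

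First I would verify the simplex property. For a forest $T$ on $V$ with components of sizes $n_1,\dots,n_k$, the set $\{\mathbf{x}_e : e \in E(T)\}$ consists of $\sum_i(n_i-1)=|V|-k$ vectors that all lie in the subspace $H_T\subset\R^V$ of vectors whose coordinates sum to zero on each component. To see linear independence, induct on $|E(T)|$: if $E(T)\neq\emptyset$, choose a leaf $v$ of the underlying forest, incident to the unique edge $e_0\in E(T)$. Then $\mathbf{x}_{e_0}$ has $\pm 1$ in the $v$-coordinate while every other $\mathbf{x}_e$ has $0$ there, so any linear relation must have zero coefficient on $e_0$; delete $v$ and $e_0$ and apply induction. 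This immediately yields both affine independences required, so $\tilde{\mathcal{Q}}_T$ and $\mathcal{Q}_T$ are simplices of dimensions $|V|-k$ and $|V|-k-1$ respectively.

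Next I would upgrade linear independence to the $\Z$-basis statement, which is the unimodularity of $\tilde{\mathcal{Q}}_T$. Given $z\in\Lambda_T$, run the same leaf induction: with $v$ and $e_0$ as above, the integer $z_v$ forces the coefficient $c_{e_0}=\pm z_v\in\Z$; subtract $c_{e_0}\mathbf{x}_{e_0}$ and observe the result is an integer vector with $v$-coordinate $0$ that lies in the analogous subspace for the smaller forest $T-e_0$ (whose $\{v\}$-component contributes no constraint). By induction the residue is an integer combination of the remaining $\mathbf{x}_e$, and the claim follows.

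The main obstacle is essentially none: the entire argument is a routine pendant-edge induction, and the only mild bookkeeping is the passage from $\tilde{\mathcal{Q}}_T$ to $\mathcal{Q}_T$, handled by the facet remark above. As an alternative route, one could quote the fact that the signed incidence matrix of any digraph is totally unimodular, apply it to the submatrix whose columns are indexed by $E(T)$, and invoke the linear independence proved above to conclude that this submatrix has a maximal minor equal to $\pm 1$; but the direct leaf induction is self-contained and equally short.
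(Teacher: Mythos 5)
Your proof is correct. Note that the paper does not actually prove this lemma; it is quoted as a known fact with a pointer to \cite[Lemma 3.5 and Corollary 3.6]{semibalanced}, so there is no internal argument to compare against. Your self-contained route is sound: the leaf (pendant-edge) induction correctly gives linear independence of $\{\mathbf{x}_e \mid e\in E(T)\}$ and then, since $z_v=\pm c_{e_0}$ at a leaf $v$, upgrades this to the statement that these vectors form a $\Z$-basis of $\mathrm{span}_{\R}\{\mathbf{x}_e\}\cap\Z^V$, which is exactly unimodularity of $\tilde{\mathcal{Q}}_T$ with apex $\mathbf{0}$. The passage to $\mathcal{Q}_T$ also checks out: writing an integer point of the translated affine hull both as $\sum_e c_e\mathbf{x}_e$ with $c_e\in\Z$ and as $\sum_{e\neq e_1}a_e(\mathbf{x}_e-\mathbf{x}_{e_1})$ and comparing coordinates (using the linear independence already established) shows $a_e=c_e\in\Z$, so the facet opposite $\mathbf{0}$ is indeed unimodular in its own affine hull; this is the standard fact that faces of unimodular simplices are unimodular, and you supply the needed justification. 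The more common route in the literature, and presumably in the cited reference, is the one you mention as an alternative: total unimodularity of the signed incidence matrix, whose forest-indexed column submatrices have all maximal minors in $\{0,\pm1\}$, combined with nonsingularity. That argument is shorter if one is willing to quote TU of incidence matrices; your leaf induction buys self-containedness at essentially no extra cost.
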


Since we are about to define 
the interior polynomial of a digraph
as the $h^*$-po\-ly\-nomial of its extended root polytope,
let us recall that
for any $d$-dimensional polytope $Q\subset\R^n$ with vertices in $\Z^n$, its \emph{$h^*$-polynomial} $\sum_{i=0}^d h^*_i t^i$, also commonly called the \emph{$h^*$-vector} of $Q$, is defined by Ehr\-hart's identity 
\begin{equation}
\label{eq:h-csillag}
\sum_{i=0}^d h^*_i t^i = (1-t)^{d+1} \ehr_Q(t),
\quad\text{where}\quad 
\ehr_Q(t)=\sum_{k=0}^\infty|(k\cdot Q)\cap\Z^n|\,t^k
\end{equation}
is the so called \emph{Ehrhart series} of $Q$. We note that $h^*_0=1$ whenever $d\ge0$, i.e., whenever $Q$ is non-empty.

Intuitively, the $h^*$-polynomial can be thought of as a refinement of volume. Indeed, $h^*(1)$ (that is, the sum of the coefficients) is equal to the normalized volume of the polytope, where by normalized we mean that the volume of a  $d$-dimensional unimodular simplex is $1$.

Now we are in a position to introduce our object of study for this section. 

\begin{defn}[Interior polynomial] 
Let $G$ be a directed graph. We call the $h^*$-polynomial of the extended root polytope $\tilde{\mathcal{Q}}_G$ the \emph{interior polynomial} of $G$, and denote it with $I_G$.
\end{defn}

\begin{remark}
	In the earlier papers \cite{hiperTutte,KP_Ehrhart,semibalanced}, the interior polynomial was defined only for so-called semi-balanced digraphs, and as the $h^*$-polynomial of $\mathcal{Q}_G$ instead of $\tilde{\mathcal{Q}}_G$. As we will soon point out, for these graphs, the $h^*$-polynomials of $\mathcal{Q}_G$ and $\tilde{\mathcal{Q}}_G$ agree. Thus our current definition 
	contains the previous one.
\end{remark}

Let us repeat our main claim about the interior polynomial.

\begin{customthm}{\ref{thm:degree_of_interior_poly}}
	Let $G=(V,E)$ be a connected 
	digraph.
	The degree of the interior polynomial of $G$ is equal to $|V|-1-\mindj(G)$, where 
	\[\mindj(G)=\min \{|K| \mid K \subseteq E \text{ is a dijoin of }G\}.\]
\end{customthm}
Before proving Theorem \ref{thm:degree_of_interior_poly}, we examine root polytopes in more detail.
If $G$ is connected, then by \cite[Proposition 1.3]{smooth_Fano} and \cite[Corollary 3.2]{semibalanced} the dimension of $\mathcal{Q}_G$ is either $|V|-1$ or $|V|-2$. It is $|V|-2$ if and only if $G$ satisfies the following condition.

\begin{defn}\cite{semibalanced}\label{prop:semibalanced_has_layering}
	A directed graph $G$ is \emph{semi-balanced} if there is a function $\ell\colon V \to \Z$ such that we have $\ell(h)-\ell(t)=1$ for each edge $\overrightarrow{th}$ of $G$. We call such a function $\ell$ a \emph{layering} of $G$. 
\end{defn} 

We note that an alternative characterization of semi-balanced digraphs \cite[Theorem 2.6]{semibalanced} is that each cycle has the same number of edges going in the two directions around the cycle. This description was given as 
the definition in \cite{semibalanced}.

Now let us turn to the extended root polytope $\tilde{\mathcal{Q}}_G$. 
As an alternative way to think of it,
note that loop edges $f$ have $\mathbf{x}_f=\mathbf{0}$. Hence $\tcQ_G=\mathcal{Q}_{G\cup f}$, where $f$ is a loop edge attached to any vertex of $G$. As a digraph with a loop is never semibalanced, this implies the following.
\begin{prop}\label{prop:dimension}
    For a connected graph $G$, the dimension of $\tcQ_G$ is $|V|-1$.
\end{prop}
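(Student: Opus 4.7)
The plan is to prove Proposition \ref{prop:dimension} by reducing it to the already-quoted dimension formula for the (non-extended) root polytope and then ruling out the semi-balanced case via an elementary obstruction. Throughout, I shall use only what is stated above the proposition.

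First I would make the reduction rigorous. Pick any vertex $v\in V$ and let $f=\overrightarrow{vv}$ be a loop attached to $v$. By definition $\mathbf{x}_f=\mathbf{1}_v-\mathbf{1}_v=\mathbf{0}$, so adding $f$ to $G$ produces a digraph $G'=G\cup f$ whose edge-vector set is exactly $\{\mathbf{0}\}\cup\{\mathbf{x}_e\mid e\in E\}$. Hence
\[
\mathcal{Q}_{G'}=\conv\{\mathbf{x}_e\mid e\in E(G')\}=\conv(\{\mathbf{0}\}\cup\{\mathbf{x}_e\mid e\in E\})=\tcQ_G.
\]
Moreover $G'$ is connected because $G$ is. By the result of Higashitani and the companion statement from \cite{semibalanced} quoted in the text, $\dim\mathcal{Q}_{G'}\in\{|V|-1,|V|-2\}$, and equals $|V|-2$ precisely when $G'$ is semi-balanced.

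Next I would exhibit the obstruction: no digraph containing a loop can be semi-balanced. Indeed, by Definition \ref{prop:semibalanced_has_layering} a layering would be a function $\ell\colon V\to\Z$ satisfying $\ell(h)-\ell(t)=1$ on every edge; applied to the loop $f=\overrightarrow{vv}$ this forces $0=\ell(v)-\ell(v)=1$, a contradiction. Therefore $G'$ is not semi-balanced, so the dichotomy above forces $\dim\mathcal{Q}_{G'}=|V|-1$, and by the identification $\mathcal{Q}_{G'}=\tcQ_G$ we conclude $\dim\tcQ_G=|V|-1$.

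There is no genuine obstacle here; the only point worth checking carefully is that adjoining a loop does not change the polytope, which follows immediately from the definition of $\mathbf{x}_e$. I note that the bound $\dim\tcQ_G\le|V|-1$ is automatic (all generators lie in the coordinate-sum-zero hyperplane of $\R^V$), so the work is in the lower bound, and this is precisely what the loop trick combined with the semi-balanced dichotomy provides.
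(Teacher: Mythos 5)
Your argument is correct and is essentially the paper's own: the paper also identifies $\tcQ_G$ with $\mathcal{Q}_{G\cup f}$ for a loop $f$, notes that a digraph with a loop is never semi-balanced, and invokes the same dimension dichotomy for connected digraphs. Your write-up merely spells out the loop-contradiction ($0=\ell(v)-\ell(v)=1$) explicitly, which is fine.
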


Note that if $G$ has a directed cycle $C$ then $\frac{1}{|C|}\sum_{e\in C}\mathbf{x}_e=\mathbf0$, that is, $\mathbf{0}$ is a point of $\mathcal{Q}_G$. Hence if $G$ is not acyclic, then $\mathcal{Q}_G=\tilde{\mathcal{Q}}_G$. 

On the other hand, if $G$ is semi-balanced with layering $\ell$, then $\mathcal{Q}_G$ lies in the affine hyperplane $\{\mathbf x\in\R^V\mid \ell\cdot\mathbf x=1\}$.
(Here we view the layering $\ell$ as a vector in $\R^V$ and use the standard dot product.) Thus, $\tcQ_G$ and $\mathcal{Q}_G$
do not coincide but as
$\tilde{\mathcal Q}_G$ is just a cone (of the minimum possible height)
over $\mathcal Q_G$, 
their $h^*$-vectors still agree. 

Hence if we defined the interior polynomial using $\mathcal{Q}_G$ instead of the extended root polytope, that would only make a difference for acyclic, but not semi-balanced graphs. However, 
in that class we do find examples which show that
Theorem \ref{thm:degree_of_interior_poly} would not hold if the $h^*$-polynomial of $\mathcal{Q}_G$ replaced that of $\tcQ_G$:

\begin{ex}\label{ex:h^*_for_acyclic_triangle}
    Consider the triangle $G=(\{v_1,v_2,v_3\}, \{\overrightarrow{v_1v_2},\overrightarrow{v_2v_3},\overrightarrow{v_1v_3}\})$. It is easy to check that $h^*_{\mathcal{Q}_G}(x)=1$, while $h^*_{\tcQ_G}(x)=x+1$. Moreover we have $\nu(G)=1$, whence $|V(G)|-1-\nu(G)=1$. Thus indeed, $h^*_{\mathcal{Q}_G}$ does not satisfy the degree formula, only $h^*_{\tcQ_G}$ does. 
\end{ex}

We will need a description of $\tilde{\mathcal{Q}}_G$ by linear inequalities (half-spaces). 
That requires the following notions.

\begin{defn}
\label{def:functional_of_cut}
Let $C^*$ be a cut in the graph $G$ with shores $V_0$ and $V_1$. Let $f_{C^*}$ be the linear functional with $f_{C^*}(\mathbf{1}_v)=1$ when $v\in V_1$ and $f_{C^*}(\mathbf{1}_v)=0$ when $v\in V_0$. 
If $G$ is directed and $C^*$ is a directed cut, we will always suppose that $V_1$ is the shore containing the heads of the edges in the cut. 
We will refer to $f_{C^*}$ as the \emph{functional induced by the cut} $C^*$.
\end{defn}

\begin{defn}
\label{def:admissible_layering}
Let $G=(V,E)$ be a directed graph.
A function $\ell\colon V \to \mathbb{Z}$ is called an \emph{admissible layering} of $G$ if $\ell(h)-\ell(t)\leq 1$ holds for each $\overrightarrow{th}\in E$, and the edges with $\ell(h)-\ell(t)= 1$ form a (weakly) connected subgraph of $G$.
\end{defn}

It is not hard to show
that any admissible layering of a semi-balanced graph is actually a layering in the sense of Definition \ref{prop:semibalanced_has_layering}, so we have not introduced anything new into that context. 
We will not rely on this fact later.

Then, our description is as follows.

\begin{prop}\label{prop:supporting_hyperplanes_of_Q_G}\label{prop:extendedrootpoly}
The extended root polytope of any connected digraph $G$ satisfies
	\begin{equation}
 \label{eq:extendedrootpoly}
 \tilde{\mathcal{Q}}_G = \left\{\mathbf x \in \mathbb{R}^V \middle|
	\begin{array}{cl}
	f_{C^*}(\mathbf x) \geq 0 & \text{for all elementary
	directed cuts }C^*\text{ of }G\\
	\ell\cdot\mathbf x \leq 1& \text{for all admissible layerings $\ell$ of $G$}\\
	\mathbf{1}\cdot\mathbf x = 0& 
 \end{array}\right\}.
 \end{equation}
\end{prop}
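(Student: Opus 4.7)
The plan is to prove $\tcQ_G \subseteq P$ (writing $P$ for the right-hand side) directly from the definitions, and then $P \subseteq \tcQ_G$ by classifying the facets of $\tcQ_G$. The first direction is routine: $\mathbf{1} \cdot \mathbf{x}_e = 0$ and $\mathbf{1} \cdot \mathbf{0} = 0$; for any directed cut $C^*$, the functional $f_{C^*}$ takes values $0$ or $1$ on each generator; and $\ell \cdot \mathbf{x}_e = \ell(h) - \ell(t) \leq 1$ when $\ell$ is an admissible layering. For the reverse, Proposition \ref{prop:dimension} gives $\dim \tcQ_G = |V|-1$ inside $\{\mathbf{1} \cdot \mathbf{x} = 0\}$, so it is enough to show every facet-defining inequality appears in the list. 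Each facet carries a primitive integer normal $\ell$ (unique modulo adding an integer multiple of $\mathbf{1}$), yielding an inequality $\ell \cdot \mathbf{x} \leq c$ with $c \geq 0$ an integer; I split according to whether $\mathbf{0}$ lies on the facet.

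If $c = 0$, then $\mathbf{0}$ is on the facet and $\ell(h) \leq \ell(t)$ for every edge. Let $V_1, \ldots, V_k$ be the level sets of $\ell$. The affine span of the on-facet generators $\{\mathbf{0}\} \cup \{\mathbf{x}_e : \ell(h) = \ell(t)\}$ equals their linear span, which sits inside $\{\mathbf{x} : \sum_{v \in V_i} x_v = 0 \text{ for each } i\}$ of dimension $|V|-k$, with the inclusion being an equality exactly when the subgraph of $G$ induced on each $V_i$ is weakly connected. Matching the facet dimension $|V|-2$ forces $k = 2$ with both induced subgraphs connected; primitivity of $\ell$ then makes $\ell$ the indicator of the higher-level shore $A$. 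Since $G$ is connected, the edges between $A$ and $V \setminus A$ form an elementary directed cut $C^*$ with tails in $A$, and $\ell \cdot \mathbf{x} \leq 0$ rewrites, via $\mathbf{1} \cdot \mathbf{x} = 0$, as $f_{C^*}(\mathbf{x}) \geq 0$.

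If $c \geq 1$, the key tool is the unimodular triangulation of $\tcQ_G$ by the simplices $\tcQ_T$ ranging over spanning trees $T$; each such simplex has dimension $|V|-1$ and is unimodular by Lemma \ref{lem:unimodular_simplex}. The facet is tiled by codimension-$1$ faces of these simplices, and any such face not containing $\mathbf{0}$ must be $\mathcal{Q}_T$ for some $T$. Unimodularity then forces the lattice width between the opposite vertex $\mathbf{0}$ and $\mathcal{Q}_T$ in the direction $\ell$ to be exactly $1$, so $c = 1$. Hence $\ell$ is integer-valued with $\ell(h) - \ell(t) \leq 1$ everywhere and equality on the tight set $E_1$. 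Since $\ell$ provides a consistent height on $E_1$, every closed walk in $E_1$ has zero signed length, giving $\mathbf{0} \notin \operatorname{aff}\{\mathbf{x}_e : e \in E_1\}$; combined with the linear-span calculation, the affine dimension equals $|V|-k-1$, where $k$ is the number of weakly connected components of $(V, E_1)$. Matching $|V|-2$ forces $k=1$, so $(V,E_1)$ is weakly connected and spans $V$, and $\ell$ is an admissible layering. The principal obstacle is precisely this $c=1$ step: without it, a primitive facet normal could give $c > 1$, producing a valid inequality that is not of admissible-layering form in any slope-preserving way. The unimodular triangulation is exactly what converts the width-$1$ property of the simplices into the required constraint.
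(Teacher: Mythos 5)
Your proof is correct, and its skeleton---checking the listed constraints on the generators, then showing that every facet-defining inequality of $\tcQ_G$ appears in the list, split according to whether the facet contains $\mathbf{0}$---is the same as the paper's; the difference lies in how each type of facet is identified. For facets through $\mathbf{0}$ you argue via the level sets of the normal and a dimension count, where the paper takes $|V|-2$ affinely independent generators on the facet forming a two-component forest; these are equivalent. The more substantive divergence is for facets avoiding $\mathbf{0}$: the paper picks $|V|-1$ affinely independent vertices of the facet, observes that the corresponding edges form a spanning tree, defines $\ell$ by $\ell\cdot\mathbf{x}_e=1$ on those edges, and notes that any edge $f$ with $\ell\cdot\mathbf{x}_f\ge 2$ would be separated from $\mathbf{0}$ by the facet hyperplane---so no triangulation is needed. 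You instead normalize the facet normal to be primitive with respect to $\Z^V\cap\{\mathbf{1}\cdot\mathbf{x}=0\}$ and invoke the triangulation of $\tcQ_G$ by the simplices $\tcQ_T$ to force $c=1$; the lattice-width argument is sound (a primitive functional that is constant on a lattice basis must take the value $1$ there), but the existence of a triangulation all of whose maximal cells are of the form $\tcQ_T$ is only asserted in the paper's introduction, so for self-containedness you should either justify it (e.g., the pulling triangulation at $\mathbf{0}$, whose maximal cells are of this form by the same cycle-freeness argument you would need anyway) or argue directly as the paper does. In exchange, your version makes the integrality normalization of the facet normal and the precise role of unimodularity explicit, while the paper's route is the more elementary of the two.
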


Here $\mathbf 1=\sum_{v\in V}\mathbf 1_v\in \R^V$. 
It will be clear from the proof below that $f_{C^*}(\mathbf x) \geq 0$, whenever $\mathbf x\in \tilde{\mathcal{Q}}_G$, holds for non-elementary directed cuts $C^*$, too.

Note that if $\ell$ is an admissible layering then so is $\ell+m\cdot\mathbf 1$ for all $m\in\Z$. As is obvious from the proposition, members of such equivalence classes of admissible layerings describe the same constraint for $\tilde{\mathcal{Q}}_G$. If we chose a representative of each class, for example by assuming that $\ell(v_0)=0$ for some fixed vertex $v_0$, then the number of constraints would become formally finite. (Indeed, there are only finitely many ways to choose the connected subgraph whose edges $\overrightarrow{th}$ satisfy $\ell(h)-\ell(t)=1$, and with $\ell(v_0)=0$ that subgraph already determines $\ell$.)

\begin{coroll}
\label{cor:facets}
For any connected digraph $G$, the facets of $\tilde{\mathcal{Q}}_G$ are as follows: 
 \begin{enumerate}
     \item \label{partone}
     $C^*\mapsto\{\mathbf x\in\tilde{\mathcal{Q}}_G\mid f_{C^*}(\mathbf x)=0\}$ gives a one-to-one correspondence between the elementary directed cuts of $G$ and facets containing $\mathbf0$. 
     \item \label{parttwo}
     $\ell\mapsto\{\mathbf x\in\tilde{\mathcal{Q}}_G\mid\ell\cdot\mathbf x=1\}$ induces a bijection between the equivalence classes of admissible layerings described above and facets not containing $\mathbf0$.
 \end{enumerate}
\end{coroll}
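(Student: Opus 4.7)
The plan is to read the corollary off from the half-space description given in Proposition \ref{prop:supporting_hyperplanes_of_Q_G}. Since $\dim\tilde{\mathcal{Q}}_G=|V|-1$ by Proposition \ref{prop:dimension} and the polytope lies in the affine hyperplane $\mathbf{1}\cdot\mathbf{x}=0$, every facet is cut out from $\tilde{\mathcal{Q}}_G$ by tightening exactly one of the listed inequalities into an equality. Noting that $\mathbf 0$ is a vertex of $\tilde{\mathcal{Q}}_G$ with $f_{C^*}(\mathbf 0)=0$ but $\ell\cdot\mathbf 0=0\neq 1$, the facets through $\mathbf 0$ must come from inequalities of the first type, and those avoiding $\mathbf 0$ from the second. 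What remains is to decide which cuts, respectively which layerings, produce faces of the correct facet dimension $|V|-2$, and to verify the claimed bijections.

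For part (\ref{partone}), I would fix a directed cut $C^*$ with shores $V_0,V_1$. Since $f_{C^*}\ge 0$ on all generators of $\tilde{\mathcal{Q}}_G$, the face $\{\mathbf x\in\tilde{\mathcal{Q}}_G\mid f_{C^*}(\mathbf x)=0\}$ is the convex hull of $\mathbf 0$ together with $\{\mathbf x_e\mid e\in E\setminus C^*\}$. Invoking the standard rank formula for the signed incidence matrix of a digraph, the affine span of this face has dimension $|V|-c(E\setminus C^*)$, where $c(E\setminus C^*)$ is the number of weak components of $(V,E\setminus C^*)$. Writing $G[V_0]$ and $G[V_1]$ as disjoint unions of $a$ and $b$ weak components respectively gives $c(E\setminus C^*)=a+b$, so the face is a facet precisely when $a=b=1$, which is exactly the condition that $C^*$ be elementary. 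Different elementary directed cuts produce different facets, because the cut is recoverable from the facet as $C^*=\{e\in E\mid\mathbf x_e\text{ is not on the facet}\}$.

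For part (\ref{parttwo}), let $\ell$ be an admissible layering and set $F_\ell=\{\,e=\overrightarrow{th}\mid\ell(h)-\ell(t)=1\,\}$. The face $\{\mathbf x\in\tilde{\mathcal{Q}}_G\mid\ell\cdot\mathbf x=1\}$ is the convex hull of $\{\mathbf x_e\mid e\in F_\ell\}$, i.e.\ it coincides with the (non-extended) root polytope $\mathcal{Q}_{G[F_\ell]}$. By Definition \ref{def:admissible_layering}, $G[F_\ell]$ is weakly connected, and the restriction of $\ell$ to it is a layering in the sense of Definition \ref{prop:semibalanced_has_layering}, so $G[F_\ell]$ is semi-balanced. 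By the dimension discussion preceding Definition \ref{prop:semibalanced_has_layering}, this forces $\dim\mathcal{Q}_{G[F_\ell]}=|V|-2$, so the face is indeed a facet. For the bijection, $F_\ell$ is reconstructible from the facet as the set of edges $e$ whose vector $\mathbf x_e$ lies on it, and $\ell$ is determined by $F_\ell$ up to an additive constant (since weak connectedness of $G[F_\ell]$ propagates the values of $\ell$ from any fixed vertex), which is exactly the stated equivalence relation on admissible layerings.

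The main obstacle I expect is ensuring that the face in part (\ref{parttwo}) genuinely attains the full dimension $|V|-2$ and is not a lower-dimensional face. This relies on the combination of weak connectedness and semi-balancedness of $G[F_\ell]$—both precisely enforced by the wording of Definition \ref{def:admissible_layering}—together with the known fact that the root polytope of a connected semi-balanced digraph has codimension one inside the affine hull of $\tilde{\mathcal{Q}}_G$. With this in place, the remaining verifications are routine consequences of the half-space description.
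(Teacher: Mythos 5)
Your argument is correct and follows essentially the same route as the paper: both rest on the half-space description of Proposition \ref{prop:supporting_hyperplanes_of_Q_G}, classify facets according to whether they contain $\mathbf 0$ (using $f_{C^*}(\mathbf 0)=0$ versus $\ell\cdot\mathbf 0=0\neq 1$), verify full facet dimension for each elementary directed cut and each admissible layering (your incidence-matrix rank count and the semi-balanced dimension fact replace the paper's two-component forest and spanning-tree-in-$G[F_\ell]$ arguments, to the same effect), and recover $C^*$, respectively $F_\ell$ and hence $\ell$ up to an additive constant, from the facet. One small inaccuracy: $\mathbf 0$ need not be a vertex of $\tilde{\mathcal{Q}}_G$ (for instance it is an interior point when $G$ is strongly connected, as the paper itself remarks), but this does not affect your proof, since the dichotomy only uses $f_{C^*}(\mathbf 0)=0$ and $\ell\cdot\mathbf 0\neq 1$.
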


Proposition \ref{prop:supporting_hyperplanes_of_Q_G} contains some previously published special cases, such as \cite[Proposition 3.6]{KP_Ehrhart} and \cite[Theorem 3.1]{arithm_symedgepoly}.
A facet description of $\mathcal Q_G$, very similar to Corollary \ref{cor:facets}, was recently obtained by Numata et al.\ \cite{numata}.

\begin{proof}[Proof of Proposition \ref{prop:supporting_hyperplanes_of_Q_G}]
    First note that each vector $\mathbf x\in\tcQ_G$ satisfies the conditions of the right hand side of \eqref{eq:extendedrootpoly}.
    Clearly $\mathbf{1}\cdot \mathbf{x}_e = 0$ for each $e\in E(G)$ and $\mathbf{1}\cdot \mathbf{0} = 0$, whence $\mathbf{1}\cdot \mathbf{x} = 0$ for each $\mathbf{x}\in \tilde{\mathcal{Q}}_G$.
    Similarly for any admissible layering $\ell$, by definition we have $\ell \cdot \mathbf{x}_e \leq 1$ for each $e\in E$, which in addition to the obvious $\ell \cdot \mathbf{0}=0$ implies $\ell\cdot\mathbf x \leq 1$ for each $\mathbf x \in \tcQ_G$.
    Finally let $C^*$ be any directed cut. Then by definition, the induced functional $f_{C^*}$ is such that $f_{C^*}(\mathbf x_e)=0$ for $e\notin C^*$ and $f_{C^*}(\mathbf x_e)=1$ for $e\in C^*$; moreover $f_{C^*}(\mathbf{0})=0$. Hence each $\mathbf x\in \tcQ_G$ satisfies $f_{C^*}(\mathbf x)\geq 0$.

	Conversely, we show that 
	any element $\mathbf x$ of
	the right hand side belongs to $\tcQ_G$.
Proposition \ref{prop:dimension} claimed that $\dim\tcQ_G=|V|-1$. The condition $\mathbf{1}\cdot\mathbf x = 0$ says that $\mathbf x$ is in the same hyperplane of $\R^V$ as $\tcQ_G$; in the rest of the proof we work relative to this hyperplane.
   
   Consider an arbitrary facet $F$ of $\tcQ_G$.
   It suffices to show that $\mathbf x$ lies either in the hyperplane of $F$ or on the same side of the hyperplane as the interior of $\tcQ_G$.
   
   The facet $F$ needs to contain $|V|-1$ affine independent vertices of $\tcQ_G$. If $F$ does not contain $\mathbf{0}$, then those are $|V|-1$ affine independent vertices of type $\mathbf x_e$, which means that the corresponding edges $e$ form a spanning tree $T$. The tree $T$, along with the orientations of its edges, determines a layering $\ell\colon V\to\Z$ so that 
   $\ell\cdot\mathbf x_e=1$  for all $e\in T$. This layering is necessarily admissible, for otherwise there would be an edge $f$ with $\ell\cdot\mathbf x_f\ge2$, 
  meaning that the vertices $\mathbf 0$ and $\mathbf x_f$ lie on opposite sides of the hyperplane of $F$. 
  Therefore the hyperplane of the facet is $\{\mathbf x \mid \ell\cdot\mathbf x=1\}$ for the admissible layering $\ell$, and as $\ell\cdot\mathbf{0}=0$, the polytope $\tilde{\mathcal{Q}}_G$ lies in the half-space $\{\mathbf p\mid \ell\cdot\mathbf p\leq 1\}$, whence indeed the hyperplane of $F$ does not separate $\mathbf{x}$ from $\tcQ_G$. 
   
   If $F$ contains $\mathbf{0}$, then it must additionally contain $|V|-2$ affine independent vectors of the form $\mathbf x_e$. Here the corresponding edges $e$ form a two-component forest in $G$, whose components give rise to an elementary cut $C^*$. That in turn induces the functional $f_{C^*}$ that vanishes along $F$. This and $F$ being a facet imply that $C^*$ is a directed cut, for otherwise there would exist vectors $\mathbf x_e$ on both sides of the kernel of $f_{C^*}$. As the vectors $\mathbf{x}_e$ for $e\in C^*$ have $f_{C^*}(\mathbf{x}_e) = 1$, clearly $\tcQ_G$ lies in the half-space $\{\mathbf p \mid f_{C^*}(\mathbf p)\geq 0\}$. Thus again, the hyperplane of $F$ does not separate $\mathbf x$ from $\tcQ_G$.
   \end{proof}

   \begin{proof}[Proof of
   Corollary \ref{cor:facets}] This is a consequence of the following additions to the last two paragraphs of the previous proof. For any admissible layering $\ell$, the corresponding connected subgraph of $G$ contains a spanning tree and thus the supporting hyperplane $\{\mathbf x \mid \ell\cdot\mathbf x=1\}$ contains $|V|-1$ affine independent vertices of $\tcQ_G$. 
   Hence we have a surjection from admissible layerings to facets not containing $\mathbf0$. Since the facet corresponding to $\ell$ has dimension $|V|-2$, its orthogonal complement with respect to $\R^V$ is $2$-dimensional and clearly spanned by $\ell$ and $\mathbf1$.
   
   For an elementary directed cut $C^*$, the supporting hyperplane $\{\mathbf x\mid f_{C^*}(\mathbf x)=0\}$ contains a facet through $\mathbf0$ because $E-C^*$ contains a two-component forest. Therefore the map given in \eqref{partone} of the Corollary makes sense; it is surjective by the previous proof; finally it is also injective because $C^*$ can be recovered as the set of edges $e$ so that $\mathbf x_e$ is not in the facet.
   \end{proof}
   
Note that we can also conclude from the previous proof that (within the subspace $\{\mathbf x\mid \mathbf 1 \cdot \mathbf x = 0\}$), the hyperplanes $\{\mathbf x \mid f_{C^*}(\mathbf x)= 0\}$ for elementary directed cuts $C^*$ and the hyperplanes $\{\mathbf x\mid \ell\cdot\mathbf x =1\}$ for admissible layerings $\ell$ all contain facets of $\tcQ_G$, and collectively these are all the facets of $\tcQ_G$. 

A key ingredient in proving Theorem \ref{thm:degree_of_interior_poly} will be the following corollary of Ehrhart--Macdonald reciprocity.

\begin{thm}\cite[Theorem 4.5]{BeckRobbins}\label{thm:degree_of_h*_BR}
	Let $P\subset\R^n$ be a $d$-dimensional 
	($d\ge0$)
	lattice polytope with $h^*$-polynomial $h^*_d t^d + \dots + h^*_1 t + 1$. 
	Then $h^*_d=\dots=h^*_{k+1}=0$ and $h^*_k\neq 0$ if and only if $(d-k+1)P$ is the smallest integer dilate of $P$ that contains a lattice point in its relative interior.
\end{thm}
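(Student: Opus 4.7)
The plan is to derive the claimed equivalence from Ehrhart--Macdonald reciprocity. Recall that reciprocity, applied to a $d$-dimensional lattice polytope $P\subset\R^n$, gives the identity of rational functions
$$\sum_{m\ge 1}\bigl|(mP^\circ)\cap\Z^n\bigr|\,t^m \;=\; (-1)^{d+1}\,\ehr_P(1/t),$$
where $P^\circ$ denotes the relative interior of $P$. Substituting the $h^*$-form \eqref{eq:h-csillag} of $\ehr_P(t)$ on the right and using the simplification $(1-1/t)^{d+1}=(-1)^{d+1}(1-t)^{d+1}/t^{d+1}$ to clear denominators yields
$$\sum_{m\ge 1}\bigl|(mP^\circ)\cap\Z^n\bigr|\,t^m \;=\; \frac{t^{d+1}\,h^*(1/t)}{(1-t)^{d+1}} \;=\; \frac{h^*_d\,t + h^*_{d-1}\,t^2 + \cdots + h^*_1\,t^d + t^{d+1}}{(1-t)^{d+1}}.$$

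The next step is to read off the lowest-degree nonzero coefficient on both sides of this identity, viewed as power series in $t$. Since $(1-t)^{-(d+1)}=1+(d+1)t+\binom{d+2}{2}t^2+\cdots$ has constant term $1$ and no negative powers of $t$, multiplying a polynomial by it does not change the degree nor the value of the lowest-degree nonzero coefficient. Writing $k$ for the largest index with $h^*_k\ne 0$, so that $h^*_d=\cdots=h^*_{k+1}=0$, the lowest-degree nonzero monomial in the numerator above is $h^*_k\,t^{d-k+1}$. Consequently, the left side satisfies
$$\bigl|(mP^\circ)\cap\Z^n\bigr|=0 \text{ for } 1\le m\le d-k, \qquad \bigl|((d-k+1)P^\circ)\cap\Z^n\bigr|=h^*_k.$$

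Both directions of the equivalence then follow immediately. If $h^*_d=\cdots=h^*_{k+1}=0$ and $h^*_k\ne 0$, the display above shows that $(d-k+1)P$ is the smallest integer dilate of $P$ whose relative interior contains a lattice point, with $h^*_k$ counting those interior lattice points. Conversely, if $(d-k+1)P$ is that smallest dilate, matching lowest-degree nonzero terms on the two sides of the identity forces exactly $h^*_d=\cdots=h^*_{k+1}=0$ and $h^*_k\ne 0$. The only substantive input is Ehrhart--Macdonald reciprocity itself; once the identity above is in hand, the rest is bookkeeping of power series coefficients, and I do not anticipate any serious obstacle.
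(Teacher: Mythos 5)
Your derivation is correct. The paper does not prove this statement at all — it is quoted from Beck--Robins \cite[Theorem 4.5]{BeckRobbins} — and your argument is essentially the standard proof of that cited result: write Ehrhart--Macdonald reciprocity in generating-function form, substitute the $h^*$-expression \eqref{eq:h-csillag} to get $\sum_{m\ge1}|(mP^\circ)\cap\Z^n|\,t^m=\bigl(h^*_dt+\cdots+h^*_1t^d+t^{d+1}\bigr)/(1-t)^{d+1}$, and read off the lowest-order nonzero coefficient, which is unaffected by multiplication with $(1-t)^{-(d+1)}$ since that series has constant term $1$ and no negative powers. Both directions follow as you say (each side of the equivalence determines $k$ uniquely), and as a bonus your identity also yields $h^*_k=|((d-k+1)P^\circ)\cap\Z^n|$, which is exactly the companion fact the paper quotes later as Theorem \ref{thm:leading_coeff_of_h^*}.
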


Notice here that $(d+1)P$ certainly contains an interior lattice point. The degree of the $h^*$-polynomial of $P$ tells us exactly `how much sooner' such a point occurs.

In our cases, $\tilde{\mathcal{Q}}_G$ is a $(|V|-1)$-dimensional polytope by Proposition \ref{prop:dimension}. 
Thus if we show that \[(\mindj(G)+1)\cdot\tilde{\mathcal{Q}}_G\] is the smallest integer dilate of the extended root polytope that contains a lattice point in its interior, then by Theorem \ref{thm:degree_of_h*_BR}, it follows that the degree of $I_G$ is indeed $|V|-1-\mindj(G)$. For this, we need to clarify the connection between dijoins of $G$ and interior points of $\tcQ_G$.

First, let us make a simple observation about dijoins. 
Let $C$ be any cycle in $G$. We call $C$ a \emph{signed cycle} if we have picked one of its cyclic orientations as ``positive.'' Then we can partition $C$ to $C^-\sqcup C^+$ where $C^-$ is the set of edges pointing in the negative cyclic direction, and $C^+$ is the set of edges pointing in the positive cyclic direction. Signed cycles are not to be confused with directed cycles; the latter means a cycle whose edges all point in the same cyclic direction. We will need two simple lemmas on the relationship of cycles and minimal dijoins.

\begin{lemma}\label{lem:min_dijoin_cyclefree}
    If $C$ is a cycle and $K$ is a dijoin such that $C\subseteq K$, then $K$ is not minimal (with respect to inclusion).
\end{lemma}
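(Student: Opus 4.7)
My plan is to prove the contrapositive via a direct contradiction argument. Assume $K$ is a minimal dijoin containing the cycle $C$, and pick an arbitrary edge $e \in C$. By minimality, $K - e$ fails to be a dijoin, so there exists a directed cut $C^*$ disjoint from $K - e$. Since $K$ itself is a dijoin, $C^* \cap K \neq \emptyset$, and combining these we get $C^* \cap K = \{e\}$. In particular, because $C \subseteq K$, this forces $C^* \cap C = \{e\}$, i.e., $|C^* \cap C| = 1$.

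The contradiction will come from the standard fact that any cycle crosses any cut an even number of times: as one traverses $C$ starting at an endpoint of $e$, each use of a cut edge switches the current shore, and returning to the starting shore requires an even number of switches. Hence $|C^* \cap C|$ must be even, contradicting $|C^* \cap C| = 1$.

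There is essentially no obstacle here; the argument only uses the defining property of dijoins and the parity of cycle-cut intersections (which does not even require the cut to be directed). The only small care to take is to confirm that a cut $C^*$ with $C^* \cap (K-e) = \emptyset$ but $C^* \cap K \ne \emptyset$ indeed satisfies $C^* \cap K = \{e\}$ — immediate from $K = (K-e) \cup \{e\}$. Writing this out cleanly should take just a few lines.
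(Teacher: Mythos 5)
Your proof is correct, but it takes a different route from the paper. The paper argues constructively: it fixes a signed orientation $C=C^+\sqcup C^-$ with $C^-\neq\emptyset$ and shows that $K-C^-$ is still a dijoin, using the orthogonality-type fact that a \emph{directed} cut which meets a cycle must meet both $C^+$ and $C^-$ (so such a cut is still covered by $C^+\subseteq K-C^-$). You instead suppose $K$ minimal, pick a single edge $e\in C$, extract a directed cut $C^*$ with $C^*\cap K=\{e\}$, hence $C^*\cap C=\{e\}$, and contradict the elementary parity fact that any cut meets any cycle in an even number of edges. Both hinge on cognate facts about cycle--cut intersections, but yours is slightly more elementary (the parity step does not use directedness of the cut, nor any choice of signed orientation), and, once unwound, it actually shows the cleaner statement that $K-e$ is a dijoin for \emph{every} $e\in C$ — indeed, the contradiction framing is dispensable: a directed cut disjoint from $K-e$ would have to meet $C$ in exactly one edge, which is impossible, so no such cut exists. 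What the paper's version buys in exchange is the explicit exchange move (delete $C^-$, keep $C^+$), which is reused quantitatively in the very next lemma (Lemma \ref{lem:min_dijoin_long_arc_free}) and later in Proposition \ref{prop:interior_point_iff_dijoin}, whereas your argument removes only one edge at a time.
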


\begin{proof}
    Choose an orientation $C=C^+\sqcup C^-$ of $C$
    so that $C^- \neq \emptyset$. We show that $K-C^-$ is also a dijoin.
    Indeed if a directed cut does not intersect $C$, then it is covered by $K-C$ and hence by $K-C^-$.
    Else if a directed cut intersects $C$, then it intersects both $C^-$ and $C^+$.
    Thus it 
    is also covered by $K-C^-$.
\end{proof}

\begin{lemma}\label{lem:min_dijoin_long_arc_free}
Suppose that $C$ is a 
signed cycle with a choice of orientation such that $|C^-|>|C^+|$.
If a dijoin $K$ contains $C^-$, then $K$ is not a minimum cardinality dijoin.
\end{lemma}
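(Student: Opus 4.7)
The plan is to replace the edges of $C^-$ in $K$ with those of $C^+$ and show that the resulting set is a strictly smaller dijoin. Concretely, I would set $K'=(K\setminus C^-)\cup C^+$ and verify both that $|K'|<|K|$ and that $K'$ is still a dijoin.

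The size inequality is immediate: we have $|K'|\le |K|-|C^-|+|C^+|<|K|$, using $|C^-|>|C^+|$, so the only real content is showing that $K'$ intersects every directed cut. The key observation (which I would state as a mini-lemma, and which is the crux of the argument) is the following parity fact: if $C^*$ is a directed cut and $C$ is any signed cycle, then $|C^+\cap C^*|=|C^-\cap C^*|$. The reason is that, traversing $C$ once in its positive orientation, each edge of $C^+$ in $C^*$ contributes a crossing from one shore to the other in the direction of the cut, while each edge of $C^-$ in $C^*$ contributes a crossing in the opposite direction; since $C$ is a closed walk, these counts must be equal. In particular, $C^*\cap C^-\ne\emptyset$ if and only if $C^*\cap C^+\ne\emptyset$.

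Given this, checking that $K'$ is a dijoin is routine. Let $C^*$ be any directed cut. If $C^*\cap C=\emptyset$, then $C^*\cap K\subseteq K\setminus C^-\subseteq K'$, so $C^*$ meets $K'$ because it meets $K$. If on the other hand $C^*$ meets $C$, then by the parity observation it meets $C^+$, and $C^+\subseteq K'$, so again $C^*\cap K'\ne\emptyset$. Hence $K'$ is a dijoin of strictly smaller cardinality than $K$, contradicting the assumed minimality of $K$.

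I expect no serious obstacles beyond getting the cycle-cut parity statement correctly phrased; once that is in hand, the rest is a bookkeeping argument that closely mirrors the proof of Lemma \ref{lem:min_dijoin_cyclefree}. In fact, Lemma \ref{lem:min_dijoin_cyclefree} is recovered here as the degenerate case $C^+=\emptyset$, so the present lemma is a mild but useful strengthening.
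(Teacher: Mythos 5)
Your proof is correct and follows essentially the same route as the paper's: replace $C^-$ by $C^+$, note the strict drop in cardinality, and use the fact that a directed cut meeting $C$ must meet both $C^+$ and $C^-$ (your crossing-parity count $|C^+\cap C^*|=|C^-\cap C^*|$ is a slightly stronger form of the nonemptiness the paper invokes). Only your closing aside is imprecise: Lemma \ref{lem:min_dijoin_cyclefree} concerns arbitrary, not necessarily directed, cycles (so one cannot in general orient them with $C^+=\emptyset$) and asserts failure of inclusion-minimality rather than of minimum cardinality, so it is not literally the degenerate case of the present lemma.
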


\begin{proof}
This follows from $|K-C^-\cup C^+|<|K|$ and $K-C^-\cup C^+$ also being a dijoin. That is so because if a directed cut does not intersect $C$, then it is covered by $K-C^-\cup C^+$ the same way as by $K$; if a directed cut intersects $C$ then it intersects both $C^-$ and $C^+$; in particular it is covered by $K-C^-\cup C^+$.
\end{proof}

Let us call a dijoin $K$ a \emph{long arc dijoin} if it contains $C^-$ for some signed cycle $C$ with $|C^-|>|C^+|$. (Note that $C^-$ and $C^+$ need not be sequences of consecutive edges.) 
Lemma \ref{lem:min_dijoin_long_arc_free} implies that long arc dijoins never have minimum cardinality among dijoins.

We claim the following characterization of relative interior points of $\tcQ_G$. The distinction between $<1$ and $\leq1$ is a crucial subtlety in the statement. Note that since $\mathbf0$ is a generator (although not always a vertex) of $\tcQ_G$, our formulas can be thought of as convex linear combinations even when the sum of the coefficients is less than $1$.

\begin{prop}\label{prop:interior_point_iff_dijoin}
	A point $\mathbf p\in \tilde{\mathcal{Q}}_G$ is in the relative interior of $\tcQ_G$ 
 if and only if there exists a dijoin $K$ of $G$, whose underlying undirected graph is cycle-free, such that
 $\mathbf p = \sum_{e\in K} \lambda_e \mathbf{x}_e$
 with
 $\lambda_e > 0$ for each $e\in K$ and $\sum_{e\in K}\lambda_e < 1$.
\end{prop}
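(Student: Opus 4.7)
The approach is to use the facet description of $\tcQ_G$ from Proposition \ref{prop:supporting_hyperplanes_of_Q_G} and Corollary \ref{cor:facets}: a point $\mathbf{p} \in \tcQ_G$ is in the relative interior if and only if $f_{C^*}(\mathbf{p}) > 0$ for every elementary directed cut $C^*$ and $\ell \cdot \mathbf{p} < 1$ for every admissible layering $\ell$. The ``if'' direction is then immediate: the directed-cut functional $f_{C^*}$ takes value $1$ on $\mathbf{x}_e$ exactly for $e \in C^*$ (and $0$ on the remaining generators, including $\mathbf{0}$), hence $f_{C^*}(\mathbf{p}) = \sum_{e \in K \cap C^*} \lambda_e > 0$ by the dijoin property together with $\lambda_e > 0$; and $\ell \cdot \mathbf{p} \le \sum_{e \in K} \lambda_e < 1$ uses only the admissibility inequality $\ell \cdot \mathbf{x}_e \le 1$.

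For the ``only if'' direction, I would first upgrade any convex-combination representation of $\mathbf{p}$ to one with a strictly positive coefficient on the generator $\mathbf{0}$. Since $\mathbf{p}$ lies in the relative interior of $\tcQ_G$, for sufficiently small $\epsilon > 0$ the scaled point $(1+\epsilon)\mathbf{p}$ still belongs to $\tcQ_G$; rescaling its convex-combination decomposition by $1/(1+\epsilon)$ and depositing the residual mass $\epsilon/(1+\epsilon)$ onto $\mathbf{0}$ yields $\mathbf{p} = \lambda_0 \mathbf{0} + \sum_{e \in E} \lambda_e \mathbf{x}_e$ with $\lambda_0 > 0$. Next, I would iteratively destroy cycles in $K = \{e : \lambda_e > 0\}$ by standard exchange moves: for a directed cycle $C \subseteq K$, the identity $\sum_{e \in C} \mathbf{x}_e = \mathbf{0}$ lets one subtract $\mu = \min_{e \in C}\lambda_e$ from each $\lambda_e$ on $C$ and transfer the total $\mu|C|$ onto $\lambda_0$; for an undirected cycle $C = C^+ \sqcup C^-$ oriented with $|C^+| \ge |C^-|$, the identity $\sum_{C^+}\mathbf{x}_e = \sum_{C^-}\mathbf{x}_e$ lets one shift mass $\mu = \min_{e \in C^+}\lambda_e$ from $C^+$ to $C^-$, pushing the net loss $\mu(|C^+|-|C^-|) \ge 0$ onto $\lambda_0$. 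Each move preserves $\mathbf{p}$, strictly decreases $|K|$, and never decreases $\lambda_0$, so after finitely many steps $K$ is cycle-free and $\sum_{e \in K}\lambda_e = 1 - \lambda_0 < 1$.

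It remains to verify that $K$ is a dijoin, and this falls out of the interior condition: for any elementary directed cut $C^*$ we have $0 < f_{C^*}(\mathbf{p}) = \sum_{e \in K \cap C^*} \lambda_e$, forcing $K \cap C^* \neq \emptyset$; since every directed cut contains an elementary one, $K$ meets them all. The main obstacle, and the reason this needs more than a one-line argument, is producing the strict inequality $\sum_{e \in K} \lambda_e < 1$ rather than merely $\le 1$: this is precisely what the $(1+\epsilon)\mathbf{p}$ trick secures at the outset, and the delicate check is then that neither type of cycle-reduction move can consume the positive slack stored on $\lambda_0$.
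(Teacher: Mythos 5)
Your proof is correct, and the crux of the ``only if'' direction is genuinely different from the paper's. The paper starts from an arbitrary convex combination $\mathbf p=\sum_{e\in S}\lambda_e\mathbf x_e+\mu\cdot\mathbf 0$ and, in the case $\sum_{e\in S}\lambda_e=1$, either constructs an admissible layering $\ell$ with $\ell\cdot\mathbf x_e=1$ for $e\in S$ and $\ell\cdot\mathbf x_e\le1$ otherwise (contradicting interiority; this is the lengthy combinatorial construction that becomes a Farkas-lemma argument in Section \ref{sec:interior_regular_matroids}), or shows that $S$ is a long-arc dijoin and repairs it by a circuit exchange. Your dilation trick replaces all of that: since the affine hull of $\tcQ_G$ is the linear subspace $\{\mathbf x\mid\mathbf 1\cdot\mathbf x=0\}$ (it contains $\mathbf 0$), the point $(1+\epsilon)\mathbf p$ stays in this hull and tends to $\mathbf p$, so relative interiority puts it in $\tcQ_G$ for small $\epsilon>0$, and rescaling deposits mass at least $\epsilon/(1+\epsilon)$ on $\mathbf 0$; you should state this affine-hull point explicitly, but it is a one-liner. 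The subsequent cycle-exchange cleanup (orient so $|C^+|\ge|C^-|$, shift the minimal mass from $C^+$ to $C^-$) coincides with the paper's final step and, as you note, never decreases the slack on $\mathbf 0$. What your route buys is brevity and the fact that it transfers verbatim to oriented regular matroids (Proposition \ref{prop:interior_point_iff_dijoin_matroid}), where the paper resorts to the Farkas lemma; what the paper's longer route buys is the stronger dichotomy that \emph{every} representation with coefficient sum $1$ forces a long-arc dijoin, which, however, is not needed for the degree or leading-coefficient theorems. One step to spell out: you extract the dijoin property only from elementary directed cuts and then invoke that every directed cut contains an elementary one; the fact you actually need is that every directed cut of a connected digraph contains an elementary \emph{directed} cut as a subset, which is true but deserves a line (take $W$ a component of the induced subgraph on the head shore and then $Z$ a component of $G[V-W]$; the cut around $Z$ works). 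Alternatively, argue as the paper does for an arbitrary directed cut $C^*$: $f_{C^*}\ge0$ on $\tcQ_G$ while $f_{C^*}=1$ on some generator, so $\{f_{C^*}=0\}\cap\tcQ_G$ is a proper face, which cannot contain a relative interior point, forcing $K\cap C^*\ne\emptyset$ directly.
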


\begin{proof}
    By Proposition \ref{prop:supporting_hyperplanes_of_Q_G}, a point $\mathbf p\in \tcQ_G$ is in the interior of $\tilde{\mathcal{Q}}_G$ if and only if $f_{C^*}(\mathbf{p})> 0$ for each directed cut $C^*$ and $\ell\cdot \mathbf{p} < 1$ for each admissible layering $\ell$. 
    Recall that the functional induced by $C^*$ satisfies 
    $f_{C^*}(\mathbf{x}_e)=0$ whenever $e\notin C^*$, and $f_{C^*}(\mathbf{x}_e)=1$ for each $e\in C^*$. 
    
    We start with the `if' direction.
    Suppose that $K$ is a 
    dijoin and $\mathbf{p}=\sum_{e\in K} \lambda_e \mathbf{x}_e$, with $\sum_{e\in K}\lambda_e < 1$ and $\lambda_e>0$ for each $e\in K$. 
    (The cycle-freeness of $K$ is not needed for this direction.)
    Then $\mathbf{p}=\sum_{e\in K} \lambda_e \mathbf{x}_e + (1-\sum_{e\in K}\lambda_e)\cdot\mathbf{0}$, in particular $\mathbf{p}\in \tilde{\mathcal{Q}}_G$.
    For any directed cut $C^*$, we have $f_{C^*}(\mathbf{p})=\sum_{e\in C^*\cap K} \lambda_e > 0$, since the intersection is nonempty (by the definition of a dijoin), and the summands are all positive. 
    Similarly, for any admissible layering $\ell$ we have $\ell\cdot \mathbf{x}_e\leq 1$ for any $e\in E$, and $\ell\cdot \mathbf{0}=0$. Since the coefficient of $\mathbf{0}$ is positive, this implies $\ell\cdot\mathbf{p} < 1$.
    Thus $\mathbf{p}$ is in the interior of $\tcQ_G$.
	
    Now we turn to the `only if' direction.
    Let $\mathbf{p}$ be an arbitrary point in the relative interior of $\tcQ_G$, and take any convex combination $\mathbf{p}=\sum_{e\in S} \lambda_e \mathbf{x}_e + \mu \cdot\mathbf{0}$ with $\lambda_e>0$ for all $e\in S$, where $S\subseteq E$ is some subset. For a point of $\tcQ_G$, we can always find such a formula (usually more than one). We claim that if $\mathbf{p}$ is in the interior, then $S$ is necessarily a dijoin. Indeed, suppose that $S$ is disjoint from a directed cut $C^*$. Then $f_{C^*}(\mathbf{p})=\sum_{e\in S}\lambda_e \cdot f_{C^*}(\mathbf{x}_e)+\mu\cdot f_{C^*}(\mathbf{0})=0$, which contradicts $\mathbf{p}$ being an interior point of $\tcQ_G$.

Next we show that either $\sum_{e\in S} \lambda_e < 1$ or $S$ is a long-arc dijoin. 
After that, we will establish that if $S$ is a long-arc dijoin, then there exists another dijoin $S'$ such that $\mathbf{p}=\sum_{e\in S'} \lambda'_e\mathbf{x}_e$ 
and $\sum_{e\in S'} \lambda'_e < 1$.
Finally, we will show how to remove edges from the dijoin
to ascertain cycle-freeness.

So suppose that the dijoin $S$ we found for $\mathbf p$ is such that
$\sum_{e\in S} \lambda_e = 1$. 
Assume for a contradiction that $S$ is not a long-arc dijoin, that is, for all 
signed cycles $C$  so that $|C^-|>|C^+|$, it does not contain $C^-$. We will reach a contradiction by showing that $\mathbf{p}$ is on the boundary of $\tcQ_G$. 
More concretely, we will find a vector 
$\ell$ such that $\ell\cdot \mathbf{x}_e=1$ for each $e\in S$ and $\ell\cdot \mathbf{x}_e\leq 1$ for each $e\notin S$. The existence of such a vector follows easily from the Farkas lemma and we will rely on that argument in Section \ref{sec:interior_regular_matroids} in the regular matroid case. Here however, let us construct $\ell$ in a more elementary way.

Choose a spanning forest within $G[S]$ and use it to fix integer values $\ell(v)$ for each $v\in V$ in such a way that $\ell\cdot \mathbf{x}_e=1$ for each edge $e$ in the forest. This can be done uniquely, up to one additive constant for each connected component of $G[S]$. For other edges of $S$ within one component, $\ell\cdot \mathbf{x}_e=1$ is automatically satisfied because they make fundamental cycles $C\subseteq S$ with the forest and such cycles have to have $|C^-|=|C^+|$ by our assumption that $S$ is not a long-arc dijoin.

It remains to ensure that $\ell\cdot\mathbf{x}_e\leq 1$ for edges $e\in E-S$. For some choice of $\ell$ as above, let us consider the set of `bad' edges that violate this condition,  that is 
\[B_\ell=\{e\in E-S\mid \ell\cdot\mathbf x_e>1\},\text{ 
as well as the set }
A_\ell=\{e\in E-S\mid \ell\cdot\mathbf x_e=1\}\]
where a violation `almost' occurs.
We use $B_\ell\cup A_\ell$ as the set of edges in an auxiliary graph $G'$, in which the vertices are the connected components of 
$G[S]$.
(I.e., $G'$ is the graph obtained by contracting each connected component of  $G[S]$
to a point and only keeping elements of $B_\ell$ and $A_\ell$ as edges, some of which may become loops.)

Suppose that $C'$ is a directed cycle in $G'$. Then there exists a pullback cycle $C$ in $G$ that is obtained by adding edges of $S$ to $C'$. As $C'$ is directed, 
$C$ can be oriented in such a way
that $C'\subseteq C^+$, and hence $C^-\subseteq S$. Because $\ell$ jumps by at least $1$ along each edge of $C'$ and by exactly $1$ along edges of $S$, we have to have $|C^-|\geq|C^+|$. But since $C^-\subseteq S$ and $S$ is not a long-arc dijoin, only $|C^-|=|C^+|$ is possible and that implies $C'\subseteq A_\ell$. 

Now as elements of $B_\ell$ cannot be in a directed cycle in $G'$, they all must be in directed cuts of $G'$. For any such cut, we may lower the values of $\ell$ by $1$ over all connected components of 
$G[S]$
that form the head-shore of the cut. It is clear from the construction that this new vector $\ell'$ has $B_{\ell'}\subseteq B_\ell$, moreover
\[\sum_{e\in B_{\ell'}}\ell'\cdot\mathbf x_e<\sum_{e\in B_\ell}\ell\cdot\mathbf x_e.\]

We may repeat our modification step for $\ell'$ instead of $\ell$, and so on as long as $B$ is non-empty, but the last observation ensures that the process will be finite. At the end of it $B$ does become empty and we have found the desired vector $\ell$ such that $\ell \cdot \mathbf{x}_e \leq 1$ for $e\in E-S$ and $\ell \cdot \mathbf{x}_e = 1$ for $e\in S$. This contradicts the assumption that $\mathbf p$ is an interior point. Hence we have proved that $\sum_{e\in S} \lambda_e = 1$ implies that $S$ is a long-arc dijoin.

Next, we show that if $\mathbf{p}=\sum_{e\in S} \lambda_e \mathbf{x}_e$ with $\lambda_e>0$ for all $e\in S$ and $\sum_{e\in S} \lambda_e = 1$, where $S$ is a long-arc dijoin, then we can find a different dijoin $S'$ such that $\mathbf{p}=\sum_{e\in S'} \lambda'_e \mathbf{x}_e$ with $\lambda'_e>0$ for all $e\in S'$ and $\sum_{e\in K} \lambda'_e < 1$.

Let $C$ be a 
signed cycle that has $C^-\subseteq S$ and $|C^-|>|C^+|$. Such a cycle exists because of the definition of a long-arc dijoin.
Let $\delta = \min \{\lambda_e \mid e\in C^-\}$ and let $M=\{ e\in C^-\mid\lambda_e=\delta\}$. Then, by the obvious $\sum_{e\in C^-}\mathbf x_e=\sum_{e\in C^+}\mathbf x_e$, we have
$\mathbf p = \sum_{e\in S-M\cup C^+} \lambda'_e \mathbf{x}_e + \mu' \cdot \mathbf{0}$ where 
$$\lambda'_e=\left\{\begin{array}{cl} 
\lambda_e & \text{if $e\notin C$},  \\
\lambda_e-\delta & \text{if $e \in C^-$}, \\
\lambda_e+\delta & \text{if $e \in C^+$},
\end{array} \right.$$
and $\mu'=(|C^-|-|C^+|)\cdot\delta$. This is a new convex combination for $\mathbf p$, with positive coefficients.
Moreover, $S-M\cup C^+$ is also a dijoin because any directed cut that intersects $C$ necessarily intersects both $C^+$ and $C^-$. Hence we have found a linear combination for $\mathbf{p}$ using a dijoin and $\mathbf0$, such that 
the coefficient of $\mathbf{0}$ is positive. 

It is left to show that $S$ can be chosen cycle-free. 
If there is any cycle $C\subseteq S$, then choose its orientation so that $|C^-|\geq|C^+|$
and apply the same procedure as in the previous paragraph. That is, lower the coefficients on elements of $C^-$ and raise them on elements of $C^+$, all by the same amount, until some coefficients become $0$. Remove the corresponding edges from $S$. This does not increase the sum of the coefficients so it remains strictly less than $1$. The remaining edges still form a dijoin. The value of the linear combination does not change, i.e., it remains $\mathbf p$.
We may continue this until the dijoin becomes cycle-free.
\end{proof}

The following lemma is equivalent to the well-known
fact that $\tcQ_F$ is a unimodular simplex for any forest $F$, cf.\ Lemma \ref{lem:unimodular_simplex}.  

\begin{lemma}\label{lemma:interior_point_integer_coordinates}
	Let us consider a forest $F$, a positive integer $s$, and a point $\mathbf p\in s\cdot\tcQ_F$. 
    Then $\mathbf p$ is a lattice point if and only if $\mathbf p = \sum_{e\in F} \mu_e \mathbf{x}_e$, with each $\mu_e$ an integer.
\end{lemma}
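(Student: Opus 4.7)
The plan is to reduce the lemma to a simple fact about linear independence of edge vectors of a forest, using Lemma \ref{lem:unimodular_simplex} for the geometric set-up. Since $F$ is a forest, Lemma \ref{lem:unimodular_simplex} asserts that $\tcQ_F$ is a unimodular simplex with vertices $\mathbf 0$ and $\{\mathbf x_e\}_{e\in F}$. In particular these vertices are affinely independent, which means the vectors $\{\mathbf x_e\}_{e\in F}$ are themselves linearly independent in $\mathbb R^V$. Consequently, every $\mathbf p\in s\cdot\tcQ_F$ has a \emph{unique} expression of the form $\mathbf p=\sum_{e\in F}\mu_e\mathbf x_e$ with $\mu_e\ge 0$ and $\sum_{e\in F}\mu_e\le s$; the coefficient of $\mathbf 0$ in the barycentric expansion simply disappears from the formula.

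The ``if'' direction is immediate: each $\mathbf x_e$ lies in $\mathbb Z^V$, so any integer combination of them does as well. For the ``only if'' direction I would prove the following slightly stronger statement, which does not use the non-negativity of the $\mu_e$: \emph{if $F$ is any forest and $\sum_{e\in F}\mu_e\mathbf x_e\in\mathbb Z^V$ for some real scalars $\mu_e$, then every $\mu_e$ is an integer.} Proceed by induction on $|F|$. The empty case is vacuous. Otherwise, fix a non-trivial connected component $T$ of $F$ and choose a leaf $v$ of $T$; then exactly one edge $e\in F$ is incident to $v$, and the $v$-coordinate of $\sum_{e'\in F}\mu_{e'}\mathbf x_{e'}$ equals $\pm\mu_e$ (with the sign depending on whether $v$ is the head or tail of $e$). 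Integrality of this coordinate forces $\mu_e\in\mathbb Z$. Subtracting $\mu_e\mathbf x_e$ from $\mathbf p$ leaves a lattice point which, by the uniqueness of the expression, equals $\sum_{e'\in F-e}\mu_{e'}\mathbf x_{e'}$, and the inductive hypothesis applied to the forest $F-e$ completes the argument.

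There is no genuine obstacle here: the lemma is essentially a reformulation of the unimodularity asserted by Lemma \ref{lem:unimodular_simplex}, and the leaf-peeling induction is entirely elementary. The only minor point requiring care is the identification between the two parametrizations of $s\cdot\tcQ_F$ (via barycentric coordinates with respect to the vertex $\mathbf 0$ versus via non-negative coefficients on the edge vectors alone), which follows cleanly from $\mathbf x_{\mathbf 0}=\mathbf 0$ and the linear independence of $\{\mathbf x_e\}_{e\in F}$.
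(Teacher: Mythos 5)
Your proof is correct. The paper does not actually spell out an argument for this lemma: it simply declares it equivalent to the unimodularity of $\tcQ_F$ (Lemma \ref{lem:unimodular_simplex}, cited from an earlier paper), and in the matroid analogue (Lemma \ref{lemma:interior_point_integer_coordinates_matroid}) it invokes Cramer's rule together with total unimodularity. What you do differently is to make this self-contained: you use Lemma \ref{lem:unimodular_simplex} only for the linear independence of $\{\mathbf x_e\}_{e\in F}$ (which pins down the coefficients uniquely and settles the quantifier issue in the ``only if'' direction), and then your leaf-peeling induction in effect reproves the unimodularity content directly --- any real combination of the $\mathbf x_e$ that lands in $\Z^V$ has integer coefficients. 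That induction is sound: a leaf $v$ of a component of $F$ meets exactly one edge $e$ of $F$, the $v$-coordinate of the combination is $\pm\mu_e$, and subtracting $\mu_e\mathbf x_e$ reduces to the smaller forest (the appeal to uniqueness at that step is not even needed; it is just algebra). The trade-off is that your argument is more elementary and self-contained for graphs, whereas the paper's route via total unimodularity and Cramer's rule is the one that transfers verbatim to the regular-matroid setting of Section \ref{sec:interior_regular_matroids}, where there is no leaf to peel.
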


\begin{proof}[Proof of Theorem \ref{thm:degree_of_interior_poly}]
	Let $K$ be a dijoin of $G$ with cardinality $\mindj(G)$. By Lemma \ref{lem:min_dijoin_cyclefree}, this implies that the underlying undirected graph of $K$ is cycle-free.
 Then $\mathbf p = \sum_{e\in K} \mathbf{x}_e + \mathbf{0}$ is a point of $(\mindj(G)+1)\cdot\tilde{\mathcal{Q}}_G$, moreover, it clearly has integer coordinates.
	Now by Proposition \ref{prop:interior_point_iff_dijoin} we have that $\mathbf q=\frac{1}{\mindj(G)+1}\mathbf p=\sum_{e\in K} \frac{1}{\mindj(G)+1} \mathbf{x}_e$ is an interior point of $\tilde{\mathcal{Q}}_G$, 
	which implies that
	$\mathbf p$ is also an interior point of $(\mindj(G)+1)\cdot\tilde{\mathcal{Q}}_G$.
	
	We also need to prove that for $s\leq \mindj(G)$, there is no interior lattice point in $s\cdot\tilde{\mathcal{Q}}_G$. Suppose that there is an interior lattice point $\mathbf p\in s\cdot\tilde{\mathcal{Q}}_G$ for some $s\in\Z_{>0}$ and consider $\mathbf q=\frac{1}{s} \mathbf p$, 
	which is an interior point of $\tilde{\mathcal{Q}}_G$. Then by Proposition \ref{prop:interior_point_iff_dijoin} there is a cycle-free dijoin $K$ such that $\mathbf q = \sum_{e\in K} \lambda_e \mathbf{x}_e$, where $\lambda_e > 0$ for each $e\in K$, and $\sum_{e\in K}\lambda_e < 1$. 

   Now we may apply Lemma \ref{lemma:interior_point_integer_coordinates} to $s$, $K$, and $\mathbf p=\sum_{e\in K} s\lambda_e \mathbf{x}_e$. This tells us that for $\mathbf p$ to be an integer vector, $s\lambda_e$ needs to be a positive integer for each $e\in K$. Hence altogether, we have $s >\sum_{e\in K} s\lambda_e \geq |K|\geq \mindj(G)$.
\end{proof}

Next, to compute the leading coefficient, that is to prove Theorem \ref{thm:leading_coeff_graph}, we use another result (a natural continuation of Theorem \ref{thm:degree_of_h*_BR}) from Ehrhart theory. 

\begin{thm}\label{thm:leading_coeff_of_h^*}\cite[Remark 1.2]{BatNil_lattice}
  Let $P$ be a $d$-dimensional lattice polytope with $h^*$-polynomial $h^*(t)=h^*_dt^d + \dots + h^*_1t + h^*_0$ such that $h^*_d=\dots =h^*_{k+1}=0$ but $h^*_k \neq 0$. Then $h^*_k$ equals to the number of lattice points in the relative interior of $(d-k+1)P$.
\end{thm}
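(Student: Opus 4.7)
The plan is to derive the statement directly from Ehrhart--Macdonald reciprocity. Let $L_P(s)$ denote the Ehrhart polynomial of $P$, i.e., the number of lattice points in $sP$ for positive integers $s$, and let $L_{P^\circ}(s)$ be the corresponding count for the relative interior. Ehrhart--Macdonald reciprocity asserts $L_{P^\circ}(s)=(-1)^d L_P(-s)$ for every positive integer $s$. Combined with the defining identity $\sum_{s\ge 0}L_P(s)\,t^s=h^*(t)/(1-t)^{d+1}$, a short rational-function manipulation (the $t\mapsto 1/t$ substitution in the Ehrhart series, followed by the sign bookkeeping dictated by reciprocity) produces the companion identity
\[
\sum_{s\ge 1}L_{P^\circ}(s)\,t^s \;=\; \frac{t^{d+1}\,h^*(1/t)}{(1-t)^{d+1}}.
\]

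From here the conclusion is a matter of reading off coefficients. Under the hypothesis $h^*_d=\cdots=h^*_{k+1}=0$ and $h^*_k\ne 0$, we have
\[
t^{d+1}h^*(1/t)=\sum_{i=0}^{k}h^*_i\,t^{d+1-i},
\]
whose lowest-order nonzero term is $h^*_k\,t^{d+1-k}$. Because the expansion $1/(1-t)^{d+1}=1+(d+1)t+\binom{d+2}{2}t^2+\cdots$ has constant term $1$, multiplying the numerator by this power series produces a formal power series whose $t^s$-coefficient vanishes for every $s<d+1-k$ and equals $h^*_k$ at $s=d+1-k$. Matching this with $\sum_{s\ge 1}L_{P^\circ}(s)\,t^s$ yields $L_{P^\circ}(s)=0$ for $s<d+1-k$ together with $L_{P^\circ}(d+1-k)=h^*_k$, which (recalling $d-k+1=d+1-k$) is exactly the claim.

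The single nontrivial input is Ehrhart--Macdonald reciprocity itself, a classical theorem provable for instance via Brion's formula or, more elementarily, via a half-open decomposition of the cone over $P$ combined with the inversion principle for rational generating functions. Once reciprocity is granted, the remainder of the argument is purely formal bookkeeping; the delicate point is tracking degrees correctly under $t\mapsto 1/t$, which is what converts the vanishing of the top $h^*$-coefficients into the vanishing of the low-degree interior-point counts. Thus the main obstacle is really just invoking (or re-deriving) reciprocity; everything after that is mechanical, and indeed is how Batyrev and Nill observed the statement in \cite{BatNil_lattice}.
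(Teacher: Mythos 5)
Your derivation is correct. The paper does not prove this statement at all — it simply quotes it from Batyrev--Nill (\cite[Remark 1.2]{BatNil_lattice}) — so there is no in-paper argument to compare against; your route via Ehrhart--Macdonald (Stanley) reciprocity, writing $\sum_{s\ge 1}L_{P^\circ}(s)\,t^s = t^{d+1}h^*(1/t)/(1-t)^{d+1}$ and reading off that the lowest nonvanishing coefficient of the numerator is $h^*_k\,t^{d+1-k}$, is exactly the standard proof and matches how the cited sources obtain both this statement and the companion degree statement (Theorem \ref{thm:degree_of_h*_BR}). The degree bookkeeping is right: only the $i=k$ term of the numerator can contribute to the coefficient of $t^{d+1-k}$ after multiplying by $1/(1-t)^{d+1}$, giving $L_{P^\circ}(d-k+1)=h^*_k$ and $L_{P^\circ}(s)=0$ for $s<d-k+1$. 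The only point worth stating explicitly is that you use reciprocity in its relative-interior form for a possibly non-full-dimensional $P$ (with $d=\dim P$ and the series normalized by $(1-t)^{d+1}$), which is the form needed here and is indeed how the classical theorem is stated.
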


Hence to compute the leading coefficient of $I_G$, we need to characterize the interior lattice points in $(\nu(G)+1)\cdot \tcQ_G$. Such a result can be easily deduced from Proposition \ref{prop:interior_point_iff_dijoin} and Lemma \ref{lemma:interior_point_integer_coordinates}.

\begin{prop}\label{prop:lattice_point_char_in_dilate}
  The interior lattice points of $(\nu(G)+1)\cdot \tcQ_G$ are exactly the net degree vectors of the minimum cardinality dijoins of $G$.
\end{prop}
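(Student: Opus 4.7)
The plan is to combine Proposition \ref{prop:interior_point_iff_dijoin} with Lemma \ref{lemma:interior_point_integer_coordinates}, pushing through the dilation $s=\nu(G)+1$ and then using the minimality of $\nu(G)$ to pin down the shape of any interior lattice point.

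First I would scale appropriately: a point $\mathbf p\in s\cdot\tcQ_G$ lies in the relative interior if and only if $\mathbf q=\mathbf p/s$ lies in the relative interior of $\tcQ_G$. By Proposition \ref{prop:interior_point_iff_dijoin}, the latter is equivalent to having a cycle-free dijoin $K$ and positive reals $\lambda_e$ with $\sum_{e\in K}\lambda_e<1$ such that $\mathbf q=\sum_{e\in K}\lambda_e\mathbf x_e$. Multiplying through by $s$, this becomes
\[\mathbf p=\sum_{e\in K}\mu_e\mathbf x_e,\qquad \mu_e:=s\lambda_e>0,\qquad \sum_{e\in K}\mu_e<s=\nu(G)+1.\]

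The main step is to force $\mu_e=1$ for all $e\in K$ and $|K|=\nu(G)$. Because $K$ is a cycle-free subgraph (hence a forest), Lemma \ref{lemma:interior_point_integer_coordinates} applies: $\mathbf p$ is a lattice point exactly when each $\mu_e$ is an integer. Together with $\mu_e>0$, this gives $\mu_e\ge 1$, hence
\[\nu(G)\leq |K|\leq \sum_{e\in K}\mu_e<\nu(G)+1,\]
where the first inequality uses that $K$ is a dijoin. All three quantities must equal $\nu(G)$, which forces $|K|=\nu(G)$ (so $K$ is a minimum cardinality dijoin) and $\mu_e=1$ for every $e\in K$. Consequently $\mathbf p=\sum_{e\in K}\mathbf x_e=\sum_{e=\overrightarrow{th}\in K}(\mathbf 1_h-\mathbf 1_t)$, which is precisely the net degree vector of the minimum cardinality dijoin $K$.

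Conversely, given any minimum cardinality dijoin $K$, Lemma \ref{lem:min_dijoin_cyclefree} guarantees that $K$ is cycle-free, so setting $\lambda_e=1/s$ for each $e\in K$ yields positive coefficients with $\sum_{e\in K}\lambda_e=\nu(G)/(\nu(G)+1)<1$. Proposition \ref{prop:interior_point_iff_dijoin} then shows that $\mathbf q=\sum_{e\in K}\lambda_e\mathbf x_e$ is an interior point of $\tcQ_G$, and hence $s\mathbf q=\sum_{e\in K}\mathbf x_e$ — the net degree vector of $K$ — is an interior lattice point of $(\nu(G)+1)\cdot\tcQ_G$. The only subtlety worth flagging is that distinct minimum cardinality dijoins can produce the same net degree vector, but this is exactly why the proposition is phrased as an equality of \emph{sets} of vectors, matching what Theorem \ref{thm:leading_coeff_of_h^*} counts.
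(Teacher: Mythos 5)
Your proposal is correct and follows essentially the same route as the paper: both proofs combine Proposition \ref{prop:interior_point_iff_dijoin} (applied to the scaled point) with Lemma \ref{lemma:interior_point_integer_coordinates} to force integer coefficients, and then use the dijoin inequality $|K|\ge\nu(G)$ together with Lemma \ref{lem:min_dijoin_cyclefree} to conclude that all coefficients equal $1$ and $K$ has minimum cardinality. Your explicit chain $\nu(G)\le|K|\le\sum_{e\in K}\mu_e<\nu(G)+1$ is just a slightly more detailed rendering of the same argument.
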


\begin{proof} 
By Proposition \ref{prop:interior_point_iff_dijoin}, $\mathbf{q}$ is an interior point of $(\nu(G)+1)\cdot\tcQ_G$ if and only if there is a cycle-free dijoin $K$ of $G$ such that $\mathbf{q}= \sum_{e\in K} \lambda_e \mathbf{x}_e$ where $\lambda_e > 0$ for each $e\in K$ and $\sum_{e\in E} \lambda_e < \nu(G) + 1$.

  By Lemma \ref{lemma:interior_point_integer_coordinates}, such a $\mathbf{q}$ is a lattice point if and only if each $\lambda_e$ is integer. Hence we conclude that $\lambda_e=1$ for each $e\in K$ and $|K|\leq \nu(G)$. By Lemma \ref{lem:min_dijoin_cyclefree}, minimum cardinality dijoins are all cycle-free.
  Consequently $\mathbf{q}$ is an interior lattice point if and only if $K$ is a minimum cardinality dijoin, and $\mathbf{q}$ is the net degree vector of $K$.
\end{proof}

\begin{proof}[Proof of Theorem \ref{thm:leading_coeff_graph}]
	The statement follows at once from Theorem \ref{thm:degree_of_interior_poly}, Proposition \ref{prop:lattice_point_char_in_dilate}, and Theorem \ref{thm:leading_coeff_of_h^*}.
\end{proof}

\section{Degree-minimizing orientations}
\label{sec:deg_min_orientations}

Let $\cG$ be an undirected graph. It is natural to ask about the relationship between interior polynomials of different orientations of $\cG$.

In \cite{sym_ribbon} we looked at a special case of this problem, and considered the semi-balanced orientations of a bipartite graph $\cG$ with partite classes $U$ and $W$.
Any bipartite graph $\cG$ has some (typically, many) semi-balanced orientations, but there are two special ones among them: The one where each edge is oriented from $U$ to $W$, and the one where each edge is oriented from $W$ to $U$. It is easy to see that these orientations are indeed semi-balanced. We call them the \emph{standard orientations} of $\cG$. 
The root polytopes of the two standard orientations are isometric, as they are reflections of each other.
In particular, their interior polynomials coincide.
After looking at several examples, in \cite{sym_ribbon} we 
suggested the following.

\begin{conj}
\label{conj:std_minimizes}
Among all semi-balanced orientations of a bipartite graph, the standard orientations minimize every coefficient of the interior polynomial. 
\end{conj}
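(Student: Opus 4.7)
The plan is to compare the $h^*$-polynomials coefficient by coefficient via a common combinatorial expansion indexed by spanning trees of the underlying bipartite graph $\cG$. I would first seek a description of the form $I_G(x) = \sum_T x^{\iota_G(T)}$, where $T$ ranges over the spanning trees of $\cG$ and $\iota_G(T)$ is an activity statistic depending on the orientation. Such an expansion is already available for the standard orientation via the hypergraph framework of \cite{hiperTutte}, and should extend to any semi-balanced $G$ by exhibiting a shelling of the unimodular triangulation of $\tcQ_G$ into simplices $\tcQ_T$, whose unimodularity is Lemma \ref{lem:unimodular_simplex}. The $h$-vector of this triangulation then equals $I_G$, and the shelling order assigns each spanning tree the desired activity.

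Next, I would reduce the comparison to a local move between orientations. Any two semi-balanced orientations of $\cG$ differ by a sequence of balanced cycle reversals (reversing the direction of all edges along a cycle with equal forward and backward edge counts); moreover, the standard orientations are reachable from any semi-balanced orientation by such moves. The plan is then to analyze the effect of a single cycle reversal, moving one step closer to the standard orientation, on the activity distribution $\{\iota_G(T)\}_T$. Concretely, I would look for a bijection on the set of spanning trees that is compatible with the reversal and that weakly decreases the activity statistic, so that the resulting change on each coefficient of $I_G$ is non-positive when moving towards the standard orientation.

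The central obstacle is the translation from per-tree activity monotonicity to full coefficient-wise monotonicity of the interior polynomial. A naive bound of the form $\iota_{G^{\text{std}}}(T) \le \iota_G(T)$ for each $T$ is not enough by itself: it only implies the weaker statement that the partial sums $\sum_{j \le i}[x^j]\,I_{G^{\text{std}}}$ dominate those of $I_G$, which is a stochastic-domination-type inequality rather than coefficientwise domination. To upgrade this to $[x^i] I_{G^{\text{std}}} \le [x^i] I_G$ for every $i$, one needs the bijection of the previous paragraph to be essentially \emph{level-preserving} in a precise sense dictated by the shelling. Producing such a bijection, and verifying that the shelling orders used for different semi-balanced orientations can be chosen compatibly so that balanced cycle reversals act nicely on activities, is where I expect the conjecture's real difficulty to lie. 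A possible complementary attack is to study the problem directly at the Ehrhart level via Theorem \ref{thm:leading_coeff_of_h^*}, constructing injections between interior lattice points of $k\cdot\tcQ_{G^{\text{std}}}$ and $k\cdot\tcQ_G$ that respect the dilation factor $k$; but any such Ehrhart-side injection would ultimately need to be compatible with the alternating sums that recover $h^*_i$ from the Ehrhart polynomial, which again seems to require the same combinatorial bijection on spanning trees.
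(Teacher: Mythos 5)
You are attempting to prove Conjecture \ref{conj:std_minimizes}, which this paper does not prove: it is stated as an open conjecture (going back to \cite{sym_ribbon}), and the paper establishes only the weakened, degree-only version, Theorem \ref{thm:orientation_minimising_degree}, by a short and entirely different argument (Theorem \ref{thm:degree_of_interior_poly} plus the Lucchesi--Younger theorem \cite{LucchesiYounger} plus \cite[Theorem 9.6.12]{Frank_book}, which says the standard orientations realize the undirected maximum number of disjoint cuts). Your text is not a proof either: the decisive ingredients --- the compatible shellings for different semi-balanced orientations, the claim that balanced cycle reversals connect all semi-balanced orientations through semi-balanced ones, and the ``essentially level-preserving'' correspondence --- are never constructed, and you say yourself that this is where the difficulty lies. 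So what you have is a programme, and the conjecture remains open.

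Beyond the acknowledged gap, the plan has concrete structural flaws. The opening expansion $I_G(x)=\sum_T x^{\iota_G(T)}$ over \emph{all} spanning trees of $\cG$ is not available: $I_G(1)$ equals the normalized volume of $\tcQ_G$, i.e.\ the number of maximal cells of one triangulation into spanning-tree simplices (Lemma \ref{lem:unimodular_simplex}), which is in general much smaller than the number of spanning trees --- already for the standard orientation of the $4$-cycle, $I_G=1+x$ while $\cG$ has four spanning trees --- and in the hypergraph framework of \cite{hiperTutte,KP_Ehrhart} the correct index set is the set of hypertrees (degree vectors), not trees. Worse for your second step, the sizes of any correct, orientation-dependent index sets genuinely differ: for $K_{2,3}$ with classes $\{u_1,u_2\}$ and $\{w_1,w_2,w_3\}$, the standard orientation has $\nu=3$ and $I=1+2x$ (normalized volume $3$), while the semi-balanced orientation with all edges leaving $u_1$ and entering $u_2$ has $\nu=2$ and $I=1+2x+x^2$ (normalized volume $4$). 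Hence a bijection on a fixed set of spanning trees that transports activities cannot exist --- it would force equal coefficient sums --- and what your strategy actually needs is an activity-respecting \emph{injection} between triangulations of two different polytopes with different numbers of cells; nothing in the proposal produces it, nor does it address that these triangulations are not canonically related by a cycle reversal. Your fallback via Theorem \ref{thm:leading_coeff_of_h^*} controls only the leading coefficient, as you note, so it does not repair the argument.
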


See \cite[Example 6.5]{semibalanced} for some concrete instances of this phenomenon. With Theorem \ref{thm:degree_of_interior_poly} in hand, we are able to prove a weakened version of the conjecture, showing that the degree of the interior polynomial is minimized by the standard orientations. In fact, we can prove this among \emph{all} orientations.

\begin{thm}\label{thm:orientation_minimising_degree}
    Let $\mathcal{G}$ be a connected, undirected bipartite graph with partite classes $U$ and $W$. Then among all orientations of $\cG$, the degree of the interior polynomial is minimized by the standard orientations.
\end{thm}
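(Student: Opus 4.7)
By Theorem \ref{thm:degree_of_interior_poly}, the degree of $I_G$ equals $|V|-1-\nu(G)$ for every orientation $G$ of $\mathcal G$, so the claim is equivalent to the combinatorial inequality
\[\nu(G)\le\nu(G_{std})\quad\text{for every orientation }G\text{ of }\mathcal G.\]
My plan is to translate this into an Ehrhart-theoretic question via the previous section. By Theorem \ref{thm:degree_of_h*_BR} combined with Propositions \ref{prop:interior_point_iff_dijoin} and \ref{prop:lattice_point_char_in_dilate}, the inequality above is equivalent to the statement that $(\nu(G_{std})+1)\tcQ_G$ has a lattice point in its relative interior.

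To build such a lattice point, I would start from a minimum cardinality dijoin $K'$ of $G_{std}$; by Lemma \ref{lem:min_dijoin_cyclefree} its underlying graph is cycle-free. Proposition \ref{prop:lattice_point_char_in_dilate} then implies that the net degree vector $\mathbf p = \sum_{e\in K'}\mathbf x_e^{std}$ is an interior lattice point of $(\nu(G_{std})+1)\tcQ_{G_{std}}$. The goal is to produce, in the same dilate of $\tcQ_G$, an analogous interior lattice point: by Proposition \ref{prop:interior_point_iff_dijoin} it is enough to exhibit a cycle-free dijoin $K\subseteq E$ of $G$ with $|K|\le\nu(G_{std})$, since then its net degree vector $\sum_{e\in K}\mathbf x_e^G$ does the job. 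Thus the entire task reduces to producing such a $K$ from the bipartite structure of $\mathcal G$ together with the orientation data of $G$.

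The hard step will be the construction of $K$ itself. The naive choice ``take $K'$ with the orientations inherited from $G$'' fails in general: already on $K_{2,2}$ with a single edge reversed one checks that the two-edge minimum dijoin of $G_{std}$ misses the new two-edge directed cut created by the reversal. The plan is therefore to walk through the edges of $K'$ one by one and, for each edge $e\in K'$ whose orientation in $G$ is opposite to its orientation in $G_{std}$, replace $e$ by a covering ``detour'' in $G$ without increasing cardinality or introducing a cycle into $K$. The bipartite hypothesis is essential at exactly this step, since the layering $\ell_{std}(u)=0$, $\ell_{std}(w)=1$ forces every generator $\mathbf x_e^G$ of $\tcQ_G$ to satisfy $\ell_{std}\cdot\mathbf x_e^G=\pm1$, and the resulting sign structure controls what replacements are available. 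As the paper's introduction points out, the analogous statement is open (and not expected to hold in such generality) for non-bipartite graphs, so any successful argument must genuinely use bipartiteness in this replacement step.
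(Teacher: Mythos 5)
Your first step is right and matches the paper: by Theorem \ref{thm:degree_of_interior_poly} the statement is exactly the inequality $\nu(G)\le\nu(G_{\mathrm{std}})$ for every orientation $G$ of $\cG$. But everything after that reduction is either a detour or a placeholder, and the placeholder is where the whole content of the theorem lives. The Ehrhart reformulation (interior lattice points of $(\nu(G_{\mathrm{std}})+1)\tcQ_G$) adds nothing: once the degree formula is available, it suffices to exhibit a dijoin of $G$ of cardinality at most $\nu(G_{\mathrm{std}})$, and you say so yourself --- at which point the lattice-point language can be dropped. The genuine gap is that you never construct such a dijoin. The ``walk through $K'$ and replace each reversed edge by a covering detour'' plan is not substantiated: a detour generically consists of several edges, so it is unclear why cardinality would be preserved; reversing a single edge changes the family of directed cuts globally, not just those through that edge (your own $K_{2,2}$ example shows a new directed cut appearing whose edges are far from the reversed edge); and no invariant is proposed that would guarantee the procedure terminates in a dijoin of $G$ at all. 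Saying that bipartiteness ``controls what replacements are available'' via the sign of $\ell_{\mathrm{std}}\cdot\mathbf x_e$ is a heuristic, not an argument. As it stands the proposal proves nothing beyond the (easy) equivalence with $\nu(G)\le\nu(G_{\mathrm{std}})$.

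For comparison, the paper closes this gap with two cited results rather than a direct dijoin transformation. By the Lucchesi--Younger theorem \cite{LucchesiYounger}, $\nu(G)$ equals the maximum number of pairwise disjoint directed cuts of $G$, which is trivially at most $c(\cG)$, the maximum number of pairwise disjoint cuts of the underlying undirected graph. Then \cite[Theorem 9.6.12]{Frank_book} states that the standard orientation of a bipartite graph admits $c(\cG)$ pairwise disjoint \emph{directed} cuts, so $\nu(G_{\mathrm{std}})\ge c(\cG)\ge\nu(G)$. That second ingredient is a nontrivial packing theorem, which is a strong hint that a purely local edge-replacement argument of the kind you sketch is unlikely to go through without importing comparable machinery; if you want to avoid citing Frank's theorem you would essentially have to reprove it.
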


\begin{proof}
Let $G$ be an arbitrary orientation of $\cG$. By the Lucchesi--Younger theorem \cite{LucchesiYounger}, the maximal number of disjoint directed cuts in $G$ 
is equal to $\nu(G)$, the minimal cardinality of a dijoin in $G$. 

As the degree of $I_G$ is equal to $|V(G)|-1-\nu(G)$, the degree of $I_G$ is minimized for those orientations of $\cG$ that maximize the number of disjoint directed cuts.

Let $c(\cG)$ denote the maximal number of disjoint cuts in $\cG$.
Clearly, for any orientation $G$ of $\cG$, we have $\nu(G)\leq c(\cG)$, since if we take $\nu(G)$ disjoint directed cuts in $G$, those correspond to disjoint cuts in $\cG$.

On the other hand, \cite[Theorem 9.6.12]{Frank_book} claims that the standard orientations have $c(\cG)$ disjoint directed cuts. Hence they do maximize $\nu$ among all orientations of $\cG$, as claimed.
\end{proof}

Concrete examples (e.g., \cite[Example 6.5]{semibalanced}) show that typically, there are some non-standard orientations that also minimize the degree of the interior polynomial.
It would be interesting to give a characterization for all the other orientations that attain the minimal degree.

As to the largest possible degree of the interior polynomial, Theorem \ref{thm:degree_of_interior_poly} readily implies the following.

\begin{prop}
    Among all orientations of a connected graph $\cG$, the degree of the interior polynomial is maximized by the following kinds of orientations:
    Consider the unique decomposition of $\cG$ into a block sum of bridge edges (i.e., one-element cuts) and $2$-connected graphs. Orient the bridges arbitrarily and give each $2$-connected component a strongly connected orientation.
\end{prop}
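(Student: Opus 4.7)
By Theorem \ref{thm:degree_of_interior_poly}, for any orientation $G$ of $\cG$ we have $\deg I_G=|V(\cG)|-1-\nu(G)$, so maximizing the degree of the interior polynomial is equivalent to minimizing $\nu(G)$ among all orientations. By the Lucchesi--Younger theorem, $\nu(G)$ equals the maximum number of pairwise edge-disjoint directed cuts in $G$. The plan is therefore to pin down the value of $\min_G\nu(G)$ and to characterize (at least partially) the orientations that attain it.

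The first step is a general lower bound. Let $k$ be the number of bridge edges of $\cG$. Each bridge is, by definition, a one-element cut of $\cG$, and in any orientation $G$ it forms a one-element directed cut of $G$. Since distinct bridges are obviously edge-disjoint, we obtain $k$ edge-disjoint directed cuts in every orientation, so $\nu(G)\ge k$ for all $G$.

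The second step is to show that the orientations $G$ described in the proposition achieve $\nu(G)=k$. For this I would exhibit a dijoin of cardinality $k$, namely the set of all bridges. Let $C^*$ be an arbitrary directed cut of $G$, with shores $V_0,V_1$. For each $2$-connected block $B$ of $\cG$, I would argue that $B$ lies entirely in $V_0$ or entirely in $V_1$: otherwise the edges of $B$ between $B\cap V_0$ and $B\cap V_1$ would be a non-empty subset of $C^*$, hence all directed from $V_0$ to $V_1$, but the strongly connected orientation chosen on $B$ must contain a directed path from a vertex of $B\cap V_1$ to a vertex of $B\cap V_0$, which would produce at least one edge of $B$ in $C^*$ pointing the opposite way, a contradiction. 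Consequently every edge of $C^*$ lies between two distinct blocks, i.e., is a bridge, so $C^*$ intersects the set of bridges. This proves that the bridges form a dijoin, hence $\nu(G)\le k$ and thus $\nu(G)=k$.

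Combining the two steps, the orientations described in the proposition indeed minimize $\nu(G)$, and hence maximize $\deg I_G$, among all orientations of $\cG$. The only genuinely substantive step is the block-respecting argument for directed cuts in the second step; this is where strong connectivity of each $2$-connected block is used, and it is the place where one must be careful, since a directed cut could a priori split a block in a complicated way. Once that is in place, the rest is bookkeeping via the Lucchesi--Younger identity and Theorem \ref{thm:degree_of_interior_poly}.
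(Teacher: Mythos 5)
Your proof is correct and follows essentially the same route as the paper's (much terser) argument: every bridge is a one-element directed cut and hence lies in every dijoin, giving $\nu(G)\ge k$, while for the described orientations the bridges themselves form a dijoin, so $\nu(G)=k$ and the degree formula of Theorem \ref{thm:degree_of_interior_poly} finishes the job. Your detailed block-by-block verification that a directed cut cannot split a strongly connected block is exactly the content the paper leaves implicit, and the appeal to Lucchesi--Younger in the lower bound is unnecessary (the trivial direction suffices) but harmless.
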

\begin{proof}
    Clearly, in any orientation, any bridge needs to be contained by any dijoin. We have a dijoin that contains only the bridge edges if and only if the $2$-connected components are strongly connected.
\end{proof}

Finally, we note that for general (non-bipartite) graphs, it can happen that there is no orientation that simultaneously minimizes all coefficients of the interior polynomial. Similarly, it can happen that there is no orientation that simultaneously maximizes all coefficients of the interior polynomial (the latter is possible also for bipartite graphs). We show two such instances in Example \ref{ex:minimizer_and_maximizer}.

It would be interesting to understand which graphs have orientations that simultaneously maximize or simultaneously minimize all coefficients of the interior polynomial.

\begin{ex}\label{ex:minimizer_and_maximizer} 
    If the graph $\cG$ is not bipartite, there is no notion of ``standard'' orientation, and there need not be any orientation coefficientwise minimizing the interior polynomial. For an example, consider the graph on the first two panels of Figure \ref{fig:min_and_max}.
    Those two orientations have interior polynomials $2x^3+6x^2+4x+1$ and $x^3+7x^2+4x+1$, respectively, whence they are incomparable, and one can check that there is no orientation that would have an interior polynomial which is coefficientwise smaller than any of them.

It is also true that sometimes there is be no orientation coefficientwise maximizing the interior polynomial. See the graph on the $3^{\text{rd}}$ and $4^{\text{th}}$ panels of Figure \ref{fig:min_and_max}. Up to isomorphism, there are $2$ strongly connected orientations here (one of them is on the $4^{\text{th}}$ panel), and they both have interior polynomial $x^5+3x^4+4x^3+4x^2+3x+1$. All other polynomials have $0$ as the coefficient of $x^5$ by Theorem \ref{thm:degree_of_interior_poly}. 
Hence our only candidate for a coefficientwise maximal interior polynomial is the one above, but since the orientation on the $3^{\text{rd}}$ panel yields $2x^4+5x^3+5x^2+3x+1$, with larger coefficients for $x^2$ and $x^3$, the candidate fails.
\end{ex}

\begin{figure}
  \begin{tikzpicture}[scale=.5]
    \tikzstyle{o}=[circle,draw,fill,scale=0.4]
    \begin{scope}[shift={(-9,0)}]
	\node [o] (0) at (2,3) {};
	\node [o] (1) at (1,4) {};
	\node [o] (2) at (2,1) {};
	\node [o] (3) at (3,2) {};
	\node [o] (4) at (0,3) {};
	\node [o] (5) at (1,0) {};	
	\node [o] (6) at (0,1) {};
	\draw [<-,>=stealth'] (0) to (1);
	\draw [->,>=stealth'] (0) to (2);
	\draw [->,>=stealth'] (0) to (3);
	\draw [->,>=stealth'] (0) to (4);
	\draw [->,>=stealth'] (1) to (4);
	\draw [->,>=stealth'] (2) to (3);
	\draw [<-,>=stealth'] (2) to (5);
	\draw [->,>=stealth'] (2) to (6);
	\draw [->,>=stealth'] (4) to (6);
	\draw [->,>=stealth'] (5) to (6);	
	\node [] at (1.5,-1.5) {\small $2x^3\!+\!6x^2\!+\!4x\!+\!1$};
  \end{scope}
  
  \begin{scope}[shift={(-3,0)}]
	\node [o] (0) at (2,3) {};
	\node [o] (1) at (1,4) {};
	\node [o] (2) at (2,1) {};
	\node [o] (3) at (3,2) {};
	\node [o] (4) at (0,3) {};
	\node [o] (5) at (1,0) {};	
	\node [o] (6) at (0,1) {};
	\draw [<-,>=stealth'] (0) to (1);
	\draw [->,>=stealth'] (0) to (2);
	\draw [->,>=stealth'] (0) to (3);
	\draw [<-,>=stealth'] (0) to (4);
	\draw [->,>=stealth'] (1) to (4);
	\draw [->,>=stealth'] (2) to (3);
	\draw [<-,>=stealth'] (2) to (5);
	\draw [->,>=stealth'] (2) to (6);
	\draw [->,>=stealth'] (4) to (6);
	\draw [->,>=stealth'] (5) to (6);	
	\node [] at (1.5,-1.5) {\small $x^3\!+\!7x^2\!+\!4x\!+\!1$};
  \end{scope}

  \begin{scope}[shift={(3.5,0)}]
	\node [o] (0) at (0,1.8) {};
	\node [o] (1) at (3,1.8) {};
	\node [o] (2) at (1.5,3.6) {};
	\node [o] (3) at (1.5,2.4) {};
	\node [o] (4) at (1.5,1.2) {};
	\node [o] (5) at (1.5,0) {};
	\draw [->,>=stealth'] (0) to (2);
	\draw [->,>=stealth'] (2) to (1);
	\draw [<-,>=stealth'] (0) to (3);
	\draw [<-,>=stealth'] (3) to (1);
	\draw [->,>=stealth'] (0) to (4);
	\draw [->,>=stealth'] (4) to (1);
	\draw [<-,>=stealth'] (0) to (5);
	\draw [<-,>=stealth'] (5) to (1);	
	\node [] at (1,-1.5) {\tiny $x^5\!+\!3x^4\!+\!4x^3\!+\!4x^2\!+\!3x\!+\!1$};
  \end{scope}
  \begin{scope}[shift={(10,0)}]
	\node [o] (0) at (0,1.8) {};
	\node [o] (1) at (3,1.8) {};
	\node [o] (2) at (1.5,3.6) {};
	\node [o] (3) at (1.5,2.4) {};
	\node [o] (4) at (1.5,1.2) {};
	\node [o] (5) at (1.5,0) {};
	\draw [->,>=stealth'] (0) to (2);
	\draw [->,>=stealth'] (2) to (1);
	\draw [<-,>=stealth'] (0) to (3);
	\draw [<-,>=stealth'] (3) to (1);
	\draw [->,>=stealth'] (0) to (4);
	\draw [->,>=stealth'] (4) to (1);
	\draw [<-,>=stealth'] (0) to (5);
	\draw [->,>=stealth'] (5) to (1);	
	\node [] at (1,-1.5) {\tiny $2x^4\!+\!5x^3\!+\!5x^2\!+\!3x\!+\!1$};
  \end{scope}
	\end{tikzpicture}
	\caption{Illustration for Example \ref{ex:minimizer_and_maximizer}.}
 \label{fig:min_and_max}
\end{figure}

\section{A generalization to regular matroids}
\label{sec:interior_regular_matroids}

It turns out that many properties of the root polytopes of digraphs extend word-by-word to oriented regular matroids, moreover, in some applications, one needs this more general case (see, for example, Section \ref{sec:greedoid_and_parking} or \cite{Eulerian_greedoid}).
Here we briefly recall the notion of a regular matroid, along with ways of orienting such a structure, and then generalize the results of Section \ref{sec:interior} to this context. For a more thorough introduction, see for example \cite{oriented_matroids_book}.

From among the many equivalent characterizations of the class of regular matroids, we will use the following: A matroid is \emph{regular} if it can be represented by the column vectors of a totally unimodular matrix. Here a matrix is \emph{totally unimodular} if each of its subdeterminants is either $0$, $-1$, or $1$. 
More concretely, let $A$ be an $n\times m$ totally unimodular 
matrix with columns $\{\mathbf a_i\}_{i\in E}$. The ground set of our matroid is going to be $E$. The rank of a set $\{i_1, \dots, i_s\}\subseteq E$ is defined as the rank of the submatrix formed by the columns $\mathbf a_{i_1}, \dots, \mathbf a_{i_s}$. A set of elements $C=\{i_1, \dots, i_s\}\subseteq E$ is a \emph{circuit} if the corresponding columns $\mathbf a_{i_1}, \dots, \mathbf a_{i_s}$ are minimally linearly dependent (over $\mathbb{R}$). 
Total unimodularity implies that in this case, the coefficients of the linear relation can be chosen from $\{-1,1\}$. For a circuit $C$, we call 
the two vectors in $\ker(A)\cap\{0,1,-1\}^n$, whose support is $C$,
the \emph{vectors of $C$}. (Minimal linear dependence implies that there are only two such vectors.)

Each circuit $C$ supports two signed circuits, both being an
ordered partition $C=C^+\sqcup C^-$, where for one of the vectors $\chi_C$ of the circuit, $C^+$ is the subset of $C$ with positive coordinates in $\chi_C$ and $C^-$ is the subset of $C$ with negative coordinates. 
For a given $C$, its two signed circuits 
differ by switching the roles of $C^+$ and $C^-$.

Let us stress that a given oriented regular matroid may have several different totally unimodular representing matrices. (For example, row elimination steps performed on $A$ do not change the oriented matroid.) On the other hand, if we multiply a column of $A$ by $-1$, the oriented matroid structure changes, while the unoriented matroid structure is preserved. (That is, the (unsigned) circuits remain the same, while the signed circuits change.) We note that for regular matroids, we do not lose any generality by concentrating on representing matrices; indeed, by \cite[Corollary~7.9.4]{oriented_matroids_book} all oriented matroid structures for a regular matroid can be obtained from a TU representing matrix.

One important example of oriented regular matroids is that of directed graphs. To a directed graph, one can associate an oriented regular matroid using its (directed) vertex-edge incidence matrix. The circuits of this matroid correspond to the cycles of the graph, and the partition $C=C^+ \sqcup C^-$ of a circuit 
is to edges of the cycle pointing in the two cyclic directions.

We call a subset $C^*\subseteq E$ a \emph{cocircuit} if there is a linear functional $h\colon\R^n\to\R$, with kernel the hyperplane $H\subseteq \mathbb{R}^n$, such that $\mathbf a_k\in H$ if $k\notin C^*$ and $\mathbf a_k\notin H$ if $k\in C^*$; moreover, $C^*$ is minimal with respect to this property. Cocircuits generalize elementary cuts of graphs. 
By the total unimodularity of $A$,
we may suppose that $h(\mathbf a_k)\in\{0,1,-1\}$ for each $k\in E$. We say that $k\in (C^*)^+$ if $h(\mathbf a_k)=1$ and $k\in (C^*)^-$ if $h(\mathbf a_k)=-1$. Again, $(C^*)^+$ and $(C^*)^-$ can switch roles, but the partition $C^*=(C^*)^+ \sqcup (C^*)^-$ is well-defined. 

A cocircuit is called \emph{directed} if either $(C^*)^+$ or $(C^*)^-$ is empty. In this case we will always suppose that $(C^*)^-$ is the empty part. 
A set $K\subseteq  E$ is called a \emph{dijoin} if $K$ intersects each directed cocircuit.

Signed circuits and signed cocircuits are orthogonal, which means that if a signed circuit $C$ and cocircuit $C^*$ have a nonempty intersection, then both $(C^+\cap (C^*)^+)\cup (C^-\cap (C^*)-)$ and $(C^+\cap (C^*)^-)\cup (C^-\cap (C^*)+)$ are nonempty \cite[Theorem 3.4.3]{oriented_matroids_book}. In particular, this means that if $C^*$ is a directed cocircuit, then if it intersects a circuit $C$, it intersects both $C^+$ and $C^-$.

We call an oriented regular matroid \emph{co-Eulerian} if $|C^+|=|C^-|$ for each circuit $C$. For graphic oriented matroids, being co-Eulerian is equivalent to the orientation of the graph being semi-balanced.

If $A$ is a totally unimodular matrix with columns $\mathbf a_1, \dots, \mathbf a_m$, then its \emph{root polytope} 
is defined as $\mathcal{Q}_A = \conv\{\mathbf a_1, \dots, \mathbf a_m\}$,
and the \emph{extended root polytope} is 
\[\tilde{\mathcal Q}_A=\conv\{\mathbf0,\mathbf a_1, \dots, \mathbf a_m\}.\]
It turns out that if $A$ and $A'$ are two totally unimodular matrices representing the same oriented matroid $M$, then the $h^*$-polynomials of $\tcQ_A$ and $\tcQ_{A'}$ are the same \cite{Eulerian_greedoid}. (We note that \cite{Eulerian_greedoid} proves this for $\mathcal{Q}_A$ and $\mathcal{Q}_{A'}$, but a straightforward modification of the argument yields the result also for $\tcQ_A$ and $\tcQ_{A'}$.)
Hence the $h^*$-polynomial of $\tcQ_A$ is an invariant of the oriented regular matroid $M$, which we call the \emph{interior polynomial}
and denote by $I_M$. Note that the orientation of the matroid matters: if we keep the (unoriented) matroid structure, but change the orientation, then the interior polynomial might change. (This is true even for graphs, cf.\ \cite[Example 6.5]{semibalanced}.) 

\begin{customthm}{\ref{thm:degree_of_interior_poly_matroids}}
	Let $M$ be a oriented regular matroid of rank $r$.
	Then the degree of $I_M$ is equal to $r-\mindj(M)$, where 
	\[\mindj(M)=\min \{|K| \mid K \subseteq  E\text{ is a dijoin of }M\}.\]
\end{customthm}

The proof proceeds through the same steps as in the graph case. First, we again need a facet description for $\tcQ_A$. 

Let us start by examining how facets of $\tcQ_A$ not containing the origin look.
Take a maximal affine independent set of vectors $\mathbf{a}_{i_1}, \dots , \mathbf{a}_{i_s}$ along such a facet. Then together with $\mathbf{0}$ they form a maximal affine independent set
among the generators of $\tcQ_A$. This happens if and only if $\mathbf{a}_{i_1}, \dots , \mathbf{a}_{i_s}$ are a maximal linearly independent set, that is, the corresponding elements form a basis in the matroid (and thus $s=r$). 
As the hyperplane of our facet does not pass through $\mathbf{0}$, we can choose a normal vector $\ell$ such that $\ell \cdot \mathbf a_{i_j} = 1$ for each $j=1, \dots, r$. 
Since $\ell \cdot \mathbf{0}=0$, this implies $\ell \cdot \mathbf a_i \leq 1$ for each $i\in  E$.

Referring to the above,
let us call a vector $\ell$ an \emph{admissible vector} if $\ell \cdot \mathbf a_i \leq 1$ for each $i\in  E$ and the elements for which equality holds form a full rank set in the matroid. 

\begin{prop}\label{prop:facet_description_matroid}
	Let $A \in\mathbb{R}^{n\times |E|}$ be a totally unimodular matrix, representing a co-Eulerian oriented regular matroid $M$. 
	Let $R$ be the 
	linear
	span of the columns $(\mathbf a_i)_{i\in E}$ of $A$. 
	Then the extended root polytope satisfies
	\begin{equation}\label{eq:matroid_facet_description}
		\tilde{\mathcal{Q}}_A = \left\{\,\mathbf x \in R\; \middle|
		\begin{array}{cl}
			h_{C^*}(\mathbf x) \geq 0 & \text{for all directed cocircuits }C^*\text{ of }G\\
			\ell\cdot\mathbf x \leq 1& \text{for all admissible vectors $\ell$ of $G$}
		\end{array}\right\}.
	\end{equation}
\end{prop}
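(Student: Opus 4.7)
My plan is to mirror the proof of Proposition~\ref{prop:supporting_hyperplanes_of_Q_G} in the matroid language, using the dictionary: elementary directed cuts $\leftrightarrow$ directed cocircuits, admissible layerings $\leftrightarrow$ admissible vectors, and spanning trees/forests $\leftrightarrow$ bases/independent sets of the matroid.

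First I verify the easy inclusion that every $\mathbf{x} \in \tcQ_A$ satisfies the stated constraints. Every generator lies in $R$, hence so does every convex combination. For a directed cocircuit $C^*$, the normalized functional $h_{C^*}$ satisfies $h_{C^*}(\mathbf{a}_i) \in \{0,1\}$ and $h_{C^*}(\mathbf{0}) = 0$, so $h_{C^*}(\mathbf{x}) \geq 0$ for every $\mathbf{x} \in \tcQ_A$. Similarly, for any admissible vector $\ell$ we have $\ell \cdot \mathbf{a}_i \leq 1$ for all $i$ and $\ell \cdot \mathbf{0} = 0$, hence $\ell \cdot \mathbf{x} \leq 1$ throughout $\tcQ_A$.

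For the converse, I would first pin down $\dim \tcQ_A = r$ using the co-Eulerian hypothesis. Since $|C^+|=|C^-|$ for every signed circuit $C$, the assignment $\mathbf{a}_i \mapsto 1$ is compatible with every circuit dependence, so there is a functional $\ell_0 \in R$ with $\ell_0 \cdot \mathbf{a}_i = 1$ for all $i \in E$. Thus $\mathcal{Q}_A$ sits in the affine hyperplane $\{\ell_0 \cdot \mathbf{x} = 1\}$ while $\mathbf{0}$ does not, so $\tcQ_A$ is full-dimensional in $R$. It therefore suffices to identify every facet of $\tcQ_A$ with one of the listed inequalities, and then invoke the fact that a full-dimensional polytope equals the intersection of its facet half-spaces. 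I split this into two cases according to whether $\mathbf{0} \in F$. If $\mathbf{0} \notin F$, then $F$ contains $r$ affinely (hence linearly) independent generators $\mathbf{a}_{i_1}, \dots, \mathbf{a}_{i_r}$ forming a matroid basis; the unique functional $\ell \in R$ with $\ell \cdot \mathbf{a}_{i_j} = 1$ for each $j$ must be admissible, because $F$ being a facet forces $\ell \cdot \mathbf{a}_i \leq 1$ for all $i$ and the equality set already has full rank. If $\mathbf{0} \in F$, then $F$ contains $r-1$ linearly independent generators spanning a hyperplane in $R$; I choose a nonzero normal functional $h$ which, by total unimodularity of $A$, can be normalized so that $h(\mathbf{a}_i) \in \{0,\pm 1\}$. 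Then $\{i : h(\mathbf{a}_i) \neq 0\}$ is a cocircuit by its defining property, and facet-ness forces all columns to lie weakly on one side, so $h(\mathbf{a}_i) \in \{0,1\}$ and the cocircuit is directed, giving $h = h_{C^*}$.

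The main obstacle is the last step: verifying that the normal $h$ to a hyperplane spanned by $r-1$ columns can be chosen with entries in $\{0,\pm 1\}$ and that its support is \emph{exactly} a cocircuit (rather than merely containing one). Total unimodularity handles the first part via Cramer's rule on the maximal square submatrix determining $h$, and minimality — giving a true cocircuit — follows because the hyperplane is determined (up to scalar) by its spanning $r-1$ columns, so no proper subset of the support can already be cut out by a hyperplane avoiding those columns. The remaining bookkeeping then parallels the graph case closely enough that I would not repeat it in full detail.
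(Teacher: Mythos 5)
Your proposal is correct and follows essentially the same route as the paper's proof: check that all generators satisfy the inequalities, then identify the supporting hyperplane of each facet with an admissible vector when $\mathbf 0\notin F$ and with a directed cocircuit when $\mathbf 0\in F$, concluding via the facet description of the full-dimensional polytope inside $R$. One small remark: your co-Eulerian detour for full-dimensionality is unnecessary (and, as phrased, not quite a valid inference on its own), since $\mathbf 0$ is a generator and hence the affine hull of $\tilde{\mathcal{Q}}_A$ is the linear span $R$, giving $\dim\tilde{\mathcal{Q}}_A=r$ immediately.
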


Here $h_{C^*}$ is the functional that appears in the definition of a cocircuit and
we remark that our conventions 
guarantee that the restriction of 
$h_{C^*}$ 
to $R$ is determined by $C^*$. Similarly, even though the number of admissible vectors is typically infinite, there are only finitely many possibilities for the restriction of $\mathbf x\mapsto\ell\cdot\mathbf x$ to $R$.

\begin{proof}
	First note that each vector $\mathbf x\in\tcQ_A$ also belongs to the right hand side.
	It suffices to check this for the generators of the convex hull:
	For any admissible vector $\ell$, by definition $\ell \cdot \mathbf{a}_i \leq 1$ for each $i\in E$, and $\ell \cdot \mathbf{0}=0\leq1$, whence $\ell\cdot\mathbf x \leq 1$ for each $\mathbf x \in \tilde{\mathcal{Q}}_A$.
	Let $C^*$ be a directed cocircuit. Then by definition, the corresponding functional $h_{C^*}$ is such that $h_{C^*}(\mathbf a_i)=0$ for $i\notin C^*$ and $h_{C^*}(\mathbf a_i)=1>0$ for $i\in C^*$, moreover, $h_{C^*}(\mathbf{0})=0$. Hence each $\mathbf x\in \tcQ_A$ satisfies $h_{C^*}(\mathbf x)\geq 0$.
	
	Conversely, we show that any vector $\mathbf{y}$, satisfying the conditions of the right hand side, belongs to $\tcQ_A$.
	Take any facet $F$ of $\tcQ_A$. If it does not contain $\mathbf{0}$, then by the argument before the statement of the proposition, $F$ lies in the hyperplane $\{ \mathbf{a}\mid \ell \cdot \mathbf x = 1\}$ for an admissible vector $\ell$, moreover, $\tcQ_A$ is a subset of $\{\mathbf x\in\mathbb{R}^n \mid \ell\cdot\mathbf x \leq 1\}$, wherefore the hyperplane of $F$ does not separate $\mathbf y$ from $\tcQ_A$.
	
	If $F$ contains $\mathbf{0}$, then it must additionally contain a set of $r-1$ linearly independent vectors $S=\{\mathbf a_{i_1}, \dots, \mathbf a_{i_{r-1}}\}$, where $r=\dim\tcQ_A=\rank(M)$.
	Let $U\subseteq \{\mathbf a_1, \dots,\mathbf a_{|E|}\}-S$ be the set of vectors that 
	do not lie along $F$.
	Then $C^*=\{i\in  E \mid \mathbf a_i\in U\}$ is a cocircuit, with $\{\mathbf x \mid h_{C^*}(\mathbf x)=0\}$ being the linear span of $S$. 
	As $F$ is a facet, all vectors $\mathbf a_i$ must lie on one side of the span of $S$. Thus $C^*$ must be a directed cocircuit, and we also see that $\{\mathbf x \mid h_{C^*}(\mathbf x)=0\}$ supports $\tilde{\mathcal Q}_A$ along $F$. Again, we conclude that the hyperplane of $F$ does not separate $\mathbf y$ from $\tcQ_A$.
	
	Altogether, if a vector $\mathbf y$ belongs to the right hand side of \eqref{eq:matroid_facet_description}, then it is on the appropriate side of each facet-defining hyperplane, whence $\mathbf y\in\tcQ_A$.
\end{proof}

Next, we elucidate the connection of dijoins to the geometry of the extended root polytope. Analogously to the case of graphs, let us call a dijoin $K$ of a oriented regular matroid $M$ a $\emph{long-arc dijoin}$ if there is a 
signed circuit $C$ such that $|C^-|>|C^+|$ and $C^-\subseteq K$.
	
	\begin{prop}\label{prop:interior_point_iff_dijoin_matroid}
		A point $\mathbf p\in \tilde{\mathcal{Q}}_A$ is in the relative interior of $\tcQ_A$ if and only if 
		there exists a circuit-free dijoin $K$ of $M$ such that $\mathbf p = \sum_{i\in K} \lambda_i \mathbf{a}_i$, where $\lambda_i > 0$ for each $i\in K$ and $\sum_{i\in K}\lambda_i < 1$.
	\end{prop}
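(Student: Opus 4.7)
The plan is to mirror the proof of the graph-theoretic Proposition \ref{prop:interior_point_iff_dijoin}, translating directed cuts into directed cocircuits, cycles into matroid circuits, and admissible layerings into admissible vectors. The only step that is not a direct translation is the elementary layering construction used in the graph case; in the matroid setting I would replace this by an application of the Farkas lemma together with the conformal (sign-compatible) decomposition of elements of $\ker A$ into signed circuit vectors, which is available because $A$ is totally unimodular.

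For the ``if'' direction, I would simply substitute $\mathbf p=\sum_{i\in K}\lambda_i\mathbf a_i+(1-\sum_i\lambda_i)\mathbf0$ into each inequality of the facet description given by Proposition \ref{prop:facet_description_matroid}. Each directed cocircuit functional yields $h_{C^*}(\mathbf p)=\sum_{i\in K\cap C^*}\lambda_i>0$ because $K$ is a dijoin, and each admissible vector yields $\ell\cdot\mathbf p\le\sum_i\lambda_i<1$ because $\ell\cdot\mathbf a_i\le 1$ for all $i$ and the $\mathbf0$-coefficient is strictly positive. Hence $\mathbf p$ lies in the relative interior.

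For the ``only if'' direction, start from an arbitrary representation $\mathbf p=\sum_{i\in S}\lambda_i\mathbf a_i+\mu\mathbf0$ with $\lambda_i>0$ and $\mu\ge 0$. Evaluating $h_{C^*}(\mathbf p)$ along directed cocircuits forces $S$ to be a dijoin, just as in the graph case. The crucial step is to show that, after possibly modifying $S$, one may take $\mu>0$. Suppose $\mu=0$, i.e., $\sum_i\lambda_i=1$, and apply the Farkas lemma to the system
\begin{equation*}
\mathbf a_i^{\top}\ell=1\ (i\in S),\qquad \mathbf a_i^{\top}\ell\le 1\ (i\notin S).
\end{equation*}
An infeasibility certificate would be a vector $\boldsymbol y\in\ker A$ with $y_i\ge 0$ for $i\notin S$ and $\sum_i y_i<0$. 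Conformal decomposition expresses $\boldsymbol y$ as a nonnegative combination of signed circuit vectors $\chi_C$ with $C^-\subseteq S$, and since $\sum_i y_i<0$ at least one such circuit must satisfy $|C^-|>|C^+|$; this would exhibit $S$ as a long-arc dijoin. Contrapositively, if $S$ is not long-arc then the Farkas system is feasible, and the resulting $\ell$ satisfies $\ell\cdot\mathbf p=1$ while $\ell\cdot\mathbf0=0<1$, placing $\mathbf p$ on a proper face of $\tcQ_A$ and contradicting interiority. When $S$ turns out to be long-arc, I would fix a signed circuit with $|C^-|>|C^+|$ and $C^-\subseteq S$ and use the circuit relation $\sum_{i\in C^-}\mathbf a_i=\sum_{i\in C^+}\mathbf a_i$ to transfer weight $\delta=\min_{i\in C^-}\lambda_i$ from $C^-$ to $C^+$; the coefficient sum then strictly decreases, while the support remains a dijoin by the orthogonality of signed circuits and signed cocircuits. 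A final sweep, transferring weight along any remaining circuit $C\subseteq S$ oriented so that $|C^-|\ge|C^+|$, thins $S$ to a circuit-free dijoin without increasing the coefficient sum.

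The main obstacle will be the Farkas step, specifically the appeal to the conformal decomposition of $\ker A$ into signed circuit vectors with sign-compatible supports. This is the matroid substitute for the explicit ``lower the layering on a shore'' construction used in the graph case, and it is the only point where the regularity of $M$ is used in an essential way.
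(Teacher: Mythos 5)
Your proposal is correct and follows essentially the same route as the paper: the same facet-description check for the ``if'' direction, and for the ``only if'' direction the same use of the Farkas lemma for totally unimodular matrices together with a (conformal) decomposition of the kernel certificate into signed circuit vectors to detect a long-arc circuit, followed by the identical weight-transfer along $\sum_{i\in C^-}\mathbf a_i=\sum_{i\in C^+}\mathbf a_i$ to make the coefficient of $\mathbf 0$ positive and then to thin $S$ to a circuit-free dijoin. The only difference is cosmetic: you argue the long-arc step contrapositively, while the paper derives the certificate directly from interiority and cites the disjoint-support circuit decomposition for $\{0,\pm1\}$ kernel vectors.
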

	
	In the proof, we will use the Farkas lemma of totally unimodular matrices (see for example \cite[Lemma 4.2.13]{Frank_book}). Let us state the lemma in the form that we will need. For two matrices $P\in\mathbb{R}^{n\times a}$ and $Q\in \mathbb{R}^{n\times b}$, we denote by $(P|Q)\in \mathbb{R}^{n\times(a+b)}$ the matrix obtained by writing the columns of $Q$ after the columns of $P$.
 
	\begin{lemma}[Farkas lemma for TU matrices]
		Suppose that $(P|Q)$ is a totally unimodular matrix, and $\mathbf b_0$, $\mathbf b_1$ are integer row vectors (of the appropriate dimension). Then there exists a row vector $\mathbf x$ such that $\mathbf x P = \mathbf b_0$, $\mathbf x Q\leq \mathbf b_1$ if and only if there do not exist column vectors $\mathbf y_0$ and $\mathbf y_1$ such that $P\mathbf y_0 + Q\mathbf y_1 =\mathbf0$, $\mathbf b_0\mathbf y_0 +\mathbf b_1\mathbf y_1 < 0$ and $\mathbf y_1\geq\mathbf 0$. Moreover, 
  if there exist $\mathbf y_0$ and $\mathbf y_1$ satisfying the inequalities, then there also exist solutions 
  $\mathbf y_0$ and $\mathbf y_1$ 
  whose coordinates are in $\{0,1,-1\}$.  
	\end{lemma}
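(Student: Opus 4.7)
The plan is to derive the equivalence from the classical (real) Farkas lemma and then strengthen the ``only if'' direction to a $\{0, \pm 1\}$-valued solution by invoking the Hoffman--Kruskal integrality theorem for totally unimodular systems.

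For the equivalence, I would first put the primal system into standard non-negative form. Split the free row vector as $\mathbf{x} = \mathbf{u} - \mathbf{v}$ with $\mathbf{u}, \mathbf{v} \geq \mathbf{0}$, and introduce a non-negative row slack $\mathbf{s}$ so that $\mathbf{u}Q - \mathbf{v}Q + \mathbf{s} = \mathbf{b}_1$. Together with $\mathbf{u}P - \mathbf{v}P = \mathbf{b}_0$, this gives a system of the form $\mathbf{z}M = (\mathbf{b}_0, \mathbf{b}_1)$ with $\mathbf{z} \geq \mathbf{0}$, where $M$ has block rows $(P\ Q)$, $(-P\ -Q)$, $(\mathbf{0}\ I)$. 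Applying the classical Farkas lemma to this system, infeasibility is equivalent to the existence of a column vector $\binom{\mathbf{y}_0}{\mathbf{y}_1}$ satisfying $M\binom{\mathbf{y}_0}{\mathbf{y}_1} \geq \mathbf{0}$ and $(\mathbf{b}_0, \mathbf{b}_1)\binom{\mathbf{y}_0}{\mathbf{y}_1} < 0$. The three block-row conditions collapse precisely to $P\mathbf{y}_0 + Q\mathbf{y}_1 = \mathbf{0}$ and $\mathbf{y}_1 \geq \mathbf{0}$, matching the alternative in the statement.

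For the $\{0, \pm 1\}$ refinement, suppose a real solution $(\mathbf{y}_0, \mathbf{y}_1)$ of the alternative exists. The constraints $P\mathbf{y}_0 + Q\mathbf{y}_1 = \mathbf{0}$ and $\mathbf{y}_1 \geq \mathbf{0}$ are homogeneous, and the strict inequality $\mathbf{b}_0 \mathbf{y}_0 + \mathbf{b}_1 \mathbf{y}_1 < 0$ is preserved under multiplication by any positive scalar. Thus, after rescaling by a sufficiently small positive factor, I may assume every coordinate of $(\mathbf{y}_0, \mathbf{y}_1)$ lies in $[-1, 1]$. Consider then the polytope
\[R = \left\{(\mathbf{y}_0, \mathbf{y}_1) : P\mathbf{y}_0 + Q\mathbf{y}_1 = \mathbf{0},\ -\mathbf{1} \leq \mathbf{y}_0 \leq \mathbf{1},\ \mathbf{0} \leq \mathbf{y}_1 \leq \mathbf{1}\right\}.\]
Its defining matrix is $(P|Q)$ augmented with rows of the form $\pm\mathbf{e}_i^T$, and adjoining signed unit rows preserves total unimodularity (cofactor-expanding any square submatrix along such a row reduces to a smaller subdeterminant of $(P|Q)$). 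Since the right-hand side is integer, the Hoffman--Kruskal theorem guarantees that every vertex of $R$ has integer coordinates, which in view of the box constraints must lie in $\{0, \pm 1\}$.

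To conclude, minimize the linear functional $\mathbf{b}_0 \mathbf{y}_0 + \mathbf{b}_1 \mathbf{y}_1$ over the nonempty bounded polytope $R$. The minimum is attained at some vertex $(\mathbf{y}_0^*, \mathbf{y}_1^*)$, which has entries in $\{0, \pm 1\}$ and whose objective value is at most the (strictly negative) value at the rescaled original solution. Hence $(\mathbf{y}_0^*, \mathbf{y}_1^*)$ is the desired $\{0, \pm 1\}$-valued witness. The main obstacle, and the one place where total unimodularity truly enters, is this final integrality extraction; the equivalence is essentially bookkeeping around the standard Farkas lemma, and the scaling step is purely algebraic.
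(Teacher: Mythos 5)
Your proof is correct. The reduction to the classical Farkas lemma is sound: after the split $\mathbf x=\mathbf u-\mathbf v$ and the slack $\mathbf s$, the alternative $M\binom{\mathbf y_0}{\mathbf y_1}\ge\mathbf 0$, $(\mathbf b_0,\mathbf b_1)\binom{\mathbf y_0}{\mathbf y_1}<0$ does collapse exactly to $P\mathbf y_0+Q\mathbf y_1=\mathbf 0$, $\mathbf y_1\ge\mathbf 0$, $\mathbf b_0\mathbf y_0+\mathbf b_1\mathbf y_1<0$, and the $\{0,\pm1\}$ refinement via the box polytope $R$ is valid: adjoining signed unit rows (and the negated equality rows) preserves total unimodularity, Hoffman--Kruskal gives integral vertices, the box constraints force them into $\{0,\pm1\}$, and minimizing the objective over the nonempty polytope $R$ lands at a vertex with strictly negative value. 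Note, incidentally, that integrality of $\mathbf b_0,\mathbf b_1$ is not even used in your extraction step, since the right-hand sides defining $R$ are $0$ and $\pm1$. One point of comparison: the paper does not prove this lemma at all --- it is quoted from Frank's book (Lemma 4.2.13) in the form needed --- so there is no in-paper argument to measure yours against; what you have written is essentially the standard textbook derivation (Farkas plus the integrality of polyhedra defined by totally unimodular systems), and it is a complete and correct substitute for the citation.
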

	
	\begin{proof}[Proof of Proposition \ref{prop:interior_point_iff_dijoin_matroid}] 
		By Proposition \ref{prop:facet_description_matroid}, a point $\mathbf p\in \tcQ_A$ is in the interior of $\tilde{\mathcal{Q}}_A$ if and only if $f_{C^*}(\mathbf{p})> 0$ for each directed cocircuit $C^*$, and $\ell\cdot \mathbf{p} < 1$ for each admissible vector $\ell$. 
		Recall that the functional induced by $C^*$ satisfies 
		$f_{C^*}(\mathbf{a}_i)=0$ whenever $i\notin C^*$, and $f_{C^*}(\mathbf{a}_i)=1$ for each $i\in C^*$. 
		
		We start with the `if' direction.
		Suppose that $K$ is a dijoin, and $\mathbf{p}=\sum_{i\in K} \lambda_i \mathbf{a}_i$ with $\sum_{i\in K}\lambda_i < 1$ and $\lambda_i>0$ for each $i\in K$. Then $\mathbf{p}=\sum_{i\in K} \lambda_i \mathbf{a}_i + (1-\sum_{i\in K}\lambda_i)\cdot\mathbf{0}$, in particular $\mathbf{p}\in \tcQ_A$.
		For any directed cocircuit $C^*$, we have $f_{C^*}(\mathbf{p})=\sum_{i\in C^*\cap K} \lambda_i > 0$, since the intersection is nonempty (by the definition of a dijoin), and the summands are all positive. 
		Similarly, for any admissible vector $\ell$ we have $\ell\cdot \mathbf{a}_i\leq 1$ for any $i\in  E$, and $\ell\cdot \mathbf{0}=0$. Since the coefficient of $\mathbf{0}$ is positive, $\ell\cdot\mathbf{p} < 1$.
		This implies that $\mathbf{p}$ is in the interior of $\tcQ_A$.
		
		Now we prove the `only if' direction.
		Let $\mathbf{p}$ be an arbitrary point in the relative interior of $\tcQ_A$, and take any convex combination $\mathbf{p}=\sum_{i\in S} \lambda_i \mathbf{a}_i + \mu \cdot\mathbf{0}$ with $\lambda_i>0$ for all $i\in S$, where $S\subseteq E$ is some subset. For a point of $\tcQ_A$, we can always find such a formula (usually more than one). We claim that if $\mathbf{p}$ is in the interior, then $S$ is necessarily a dijoin. Indeed, suppose that $S$ is disjoint from a directed cocircuit $C^*$. Then $f_{C^*}(\mathbf{p})=\sum_{i\in S}\lambda_i \cdot f_{C^*}(\mathbf{a}_i)+\mu\cdot f_{C^*}(\mathbf{0})=0$, which would contradict $\mathbf{p}$ being an interior point of $\tcQ_A$.

		Next, we show that either $\sum_{i\in S} \lambda_i < 1$, or $S$ is a long-arc dijoin. 
		Then, we show that if $S$ is a long-arc dijoin, then there exist another dijoin $S'$ such that $\mathbf{p}=\sum_{i\in S'} \lambda'_i \mathbf{a}_i + \mu' \cdot\mathbf{0}$ and $\sum_{i\in S'} \lambda'_i < 1$. Lastly, we show that we can also make the dijoin circuit-free, which finishes the proof of the `only if' direction.
		
		If $\sum_{i\in S} \lambda_i < 1$, then there is nothing to prove. So suppose that $\sum_{i\in S} \lambda_i = 1$. We show that in this case $S$ is a long-arc dijoin.
		
		The fact that $\mathbf{p}$ is  an interior point of $\tcQ_A$ implies that there is no vector $\ell$ such that $\ell\cdot \mathbf{a}_i=1$ for each $i\in S$ and $\ell\cdot \mathbf{a}_i\leq 1$ for each $i\notin S$. Indeed, such a vector would give a supporting hyperplane of $\tcQ_A$ containing $\mathbf{p}$.
		
		Take the totally unimodular representing matrix $A$ of $M$. Let us gather the columns corresponding to $S$ into the submatrix $A_S$, and the columns corresponding to $E-S$ into the submatrix $A_{E-S}$. 
		
		Our previous reasoning says that there is no
		row vector $\ell$ such that $\ell \cdot A_S = \mathbf{1}$, and $\ell \cdot A_{E-S} \leq \mathbf{1}$. (Here, we denote by $\mathbf{1}$ a row vector of appropriate dimension, with all coordinates equal to 1, and with a slight abuse of notation, we do not indicate the dimension).
		
		As $A=(A_S|A_{E-S})$ is totally unimodular, by the Farkas lemma, if there exists no solution $\ell$ for the system of inequalities, then there exist $\mathbf y_0$, $\mathbf y_1$ such that $A_S\cdot \mathbf y_0 + A_{E-S}\cdot\mathbf y_1 =\mathbf 0$, $\mathbf{1}\cdot\mathbf y_0 + \mathbf{1}\cdot\mathbf y_1 < 0$, and $\mathbf y_1 \geq 0$, moreover, $\mathbf y_0$ and $\mathbf y_1$ have all their coordinates in $\{0,1,-1\}$.
		
		In other words, $\mathbf y=\begin{bmatrix}\mathbf y_0\\ \mathbf y_1\end{bmatrix}\in \ker(A)$. Recall that the circuits of $M$ are the minimal supports of elements of $\ker(A)$. 
		In this case, there exists a set of signed circuits $\mathcal{C}_0$ with mutually disjoint supports such that $\mathbf y=\sum_{C\in \mathcal{C}_0} \chi_C$ (see for example \cite[Lemma 4.1.1]{BBY}).  Hence $0 > \mathbf{1}\cdot\mathbf y = \sum_{C\in \mathcal{C}_0} \mathbf{1}\cdot \chi_C = \sum_{C\in \mathcal{C}_0} (|C^+|-|C^-|)$. This implies that there is at least one circuit $C\in \mathcal{C}_0$ such that $|C^-|>|C^+|$. As $\mathbf y_1\geq\mathbf 0$, elements of $(E-S)\cap C$ are in $C^+$.  Hence $C^-\subseteq S$. Thus, $C$ witnesses the fact that $S$ is a long-arc dijoin.
		
		Next, we show that if $\mathbf{p}=\sum_{i\in S} \lambda_i \mathbf{a}_i$ with $\lambda_i>0$, $\sum_{i\in S} \lambda_i = 1$ where $S$ is a long-arc dijoin, then we can find a different dijoin $K$ such that $\mathbf{p}=\sum_{i\in K} \lambda_i \mathbf{a}_i$ with $\lambda_i>0$ and $\sum_{i\in K} \lambda_i < 1$.
		
		Let $C$ be the circuit that has $C^-\subseteq S$ and $|C^-|>|C^+|$. Such a circuit exists because of the definition of a long-arc dijoin.
		Let $\delta = \min \{\lambda_i \mid i\in C^-\}$ and let $S'=\{ i\in C^-\mid\lambda_i=\delta\}$. Then $\mathbf p = \sum_{i\in S-S'\cup C^+} \lambda'_i \mathbf{a}_i + \mu' \cdot \mathbf{0}$ where 
		$$\lambda'_i=\left\{\begin{array}{cl} 
			\lambda_i & \text{if $i\notin C$},  \\
			\lambda_i-\delta & \text{if $i \in C^-$}, \\
			\lambda_i+\delta & \text{if $i \in C^+$},
		\end{array} \right.$$
		and $\mu'= \mu + (|C^-|-|C^+|)\cdot\delta$. This is a new convex combination for $\mathbf p$, where the coefficients are only positive for edges in $S-S'\cup C^+$, and for $\mathbf{0}$.
		Moreover, $S-S'\cup C^+$ is also a dijoin because any directed cut that intersects $C$ necessarily intersects both $C^+$ and $C^-$. Hence we have found a linear combination for $\mathbf{p}$ where now the coefficient of $\mathbf{0}$ is positive. 

        It remains to show that $S$ can be chosen circuit-free. If there is any circuit $C\subseteq S$, then orient it so that $|C^-|\geq |C^+|$. By the same argument as above, we can also write $\mathbf{p}$ as a combination of vectors of $S-S'$, where $S'$ is the set of elements of $C^-$ with minimal coefficients in the original linear combination. $S-S'$ is still a dijoin, since $C^+\subseteq S-S'$. Moreover, the coefficient of $\mathbf{0}$ can only increase by this operation. We can continue this until $S$ becomes independent in $M$, which happens in finitely many steps, as the size of the dijoin always decreases.
	\end{proof}
	
We also need the following lemma, whose proof is a literal generalization of the graph case (Lemma \ref{lem:min_dijoin_cyclefree}), which is why we omit it.
\begin{lemma}\label{lem:min_dijoin_circuitfree}
    If $C$ is a circuit and $K$ is a dijoin such that $C\subseteq K$, then $K$ is not minimal (with respect to inclusion).
\end{lemma}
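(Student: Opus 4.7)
The plan is to mimic the graph-case proof of Lemma \ref{lem:min_dijoin_cyclefree} line by line, with cycles replaced by circuits and directed cuts replaced by directed cocircuits. Given a circuit $C$ contained in the dijoin $K$, I would pick a signed orientation $C = C^+ \sqcup C^-$ of $C$ so that $C^- \neq \emptyset$; this is always possible since $C$ is nonempty and the two signed versions of the circuit differ by swapping the roles of $C^+$ and $C^-$. I then claim $K' := K - C^-$ is still a dijoin, which gives the conclusion.

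To check that $K'$ meets every directed cocircuit $C^*$, split into two cases. If $C^* \cap C = \emptyset$, then $C^* \cap K' = C^* \cap K$, which is nonempty because $K$ is a dijoin. If $C^* \cap C \neq \emptyset$, invoke the orthogonality of signed circuits and signed cocircuits recalled earlier in this section: both $(C^+ \cap (C^*)^+) \cup (C^- \cap (C^*)^-)$ and $(C^+ \cap (C^*)^-) \cup (C^- \cap (C^*)^+)$ must be nonempty. Since $C^*$ is directed, we have $(C^*)^- = \emptyset$ by convention, and the two conditions collapse to $C^+ \cap (C^*)^+ \neq \emptyset$ and $C^- \cap (C^*)^+ \neq \emptyset$. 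The first of these yields an element of $C^+ \cap C^*$, and since $C^+ \subseteq C \subseteq K$ is disjoint from $C^-$, it lies in $K'$, so $K' \cap C^* \neq \emptyset$.

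One subtlety is the degenerate case $C^+ = \emptyset$ (forcing $C^- = C$). There the orthogonality step for producing an element of $C^* \cap C^+$ collapses, but the same orthogonality consideration shows that no directed cocircuit $C^*$ can meet $C$ at all — otherwise $C^+ \cap C^*$ would have to be nonempty, a contradiction. Hence every directed cocircuit falls into the first case, and $K - C^- = K - C$ is again a dijoin. The only real obstacle is keeping the sign conventions for directed cocircuits straight throughout; once that bookkeeping is fixed, the argument is a faithful translation of the graph case.
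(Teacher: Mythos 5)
Your proof is correct and is precisely the ``literal generalization of the graph case'' that the paper invokes when it omits the argument: removing $C^-$ from $K$ and using the orthogonality of signed circuits and directed cocircuits (so that any directed cocircuit meeting $C$ must meet both $C^+$ and $C^-$, hence meets $C^+\subseteq K-C^-$) is exactly the intended proof. Your explicit treatment of the degenerate case $C^+=\emptyset$ is a welcome bit of care, and it is consistent with the graph-case proof, where that situation is silently absorbed by the observation that a directed cocircuit (cut) then cannot meet $C$ at all.
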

	
	\begin{lemma}\label{lemma:interior_point_integer_coordinates_matroid}
		Let $F$ be an independent set in the oriented regular matroid $M$, represented by the totally unimodular matrix $A$, and let $s\in\Z_{>0}$. A point $\mathbf p\in s\cdot\tcQ_F$ is a lattice point if and only if $\mathbf p = \sum_{i\in F} \mu_i \mathbf a_i$, where each $\mu_i$ is integer.
	\end{lemma}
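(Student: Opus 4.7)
The plan is as follows. The ``if'' direction is immediate: each column $\mathbf a_i$ of the totally unimodular matrix $A$ is a lattice vector in $\Z^n$, so any integer combination $\sum_{i\in F}\mu_i \mathbf a_i$ remains in $\Z^n$. For the ``only if'' direction, I exploit two features in tandem: $F$ being matroid-independent yields a \emph{unique} expansion $\mathbf p=\sum_{i\in F}\mu_i\mathbf a_i$, while total unimodularity then forces the resulting $\mu_i$ to be integers as soon as $\mathbf p$ is.

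For the uniqueness, observe that $\tcQ_F=\conv(\{\mathbf 0\}\cup\{\mathbf a_i\mid i\in F\})$; since $F$ is independent in $M$, the vectors $\{\mathbf a_i\}_{i\in F}$ are linearly independent in $\R^n$, hence the $|F|+1$ generators of $\tcQ_F$ are affinely independent and $\tcQ_F$ is an honest simplex. Consequently any $\mathbf p\in s\cdot\tcQ_F$ admits a unique expression $\mathbf p=\sum_{i\in F}\mu_i \mathbf a_i$ with real coefficients (automatically nonnegative and satisfying $\sum_i\mu_i\leq s$, although these bounds are not needed below).

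For integrality, let $A_F$ denote the $n\times|F|$ submatrix of $A$ consisting of the columns $\mathbf a_i$ for $i\in F$. It has full column rank $|F|$ and is totally unimodular as a submatrix of a TU matrix. The key step is the standard fact that every full-column-rank TU matrix possesses a square submatrix of determinant $\pm 1$: full column rank guarantees that some $|F|\times|F|$ minor is nonzero, and total unimodularity forces every nonzero subdeterminant to equal $\pm 1$. Pick such an $|F|\times|F|$ submatrix $B$ of $A_F$, obtained by restricting to an appropriate set of $|F|$ rows. Restricting the vector equation ``$A_F$ applied to the coefficient vector equals $\mathbf p$'' to these rows yields $B\mathbf y = \mathbf p'$, where $\mathbf p'$ is the corresponding subvector of $\mathbf p$ and $\mathbf y$ is the $|F|$-vector with entries $\mu_i$. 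If $\mathbf p\in\Z^n$ then $\mathbf p'\in\Z^{|F|}$; since $\det(B)=\pm 1$, the matrix $B^{-1}$ has integer entries (by the adjugate formula), so $\mathbf y=B^{-1}\mathbf p'\in\Z^{|F|}$, i.e., every $\mu_i$ is an integer.

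I do not anticipate any real obstacle; the argument is the natural matroid-theoretic counterpart of Lemma \ref{lemma:interior_point_integer_coordinates} for forests, with total unimodularity of $A_F$ playing the role that the oriented vertex-edge incidence matrix played in the graph setting. Geometrically, the content is precisely that $\tcQ_F$ remains a \emph{unimodular} simplex with respect to the lattice $\Z^n$, which is exactly the natural generalization of Lemma \ref{lem:unimodular_simplex}.
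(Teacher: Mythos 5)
Your proof is correct and follows essentially the same route as the paper, whose argument is exactly ``Cramer's rule and total unimodularity'': picking a nonsingular $|F|\times|F|$ submatrix $B$ of $A_F$ with $\det B=\pm1$ and inverting it over $\Z$ is precisely that argument spelled out. The preliminary uniqueness observation (that independence of $F$ makes $\tcQ_F$ a simplex, so the coefficients $\mu_i$ are well defined) is a sound and welcome addition.
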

	
	\begin{proof}
		This follows 
		from Cramer's rule and the total unimodularity of $A$.
	\end{proof}

	\begin{proof}[Proof of Theorem \ref{thm:degree_of_interior_poly_matroids}] 
		Let $K$ be a dijoin of cardinality $\mindj(M)$. By Lemma \ref{lem:min_dijoin_circuitfree}, $K$ is independent in $M$. Then $\mathbf p = \sum_{k\in K}\mathbf a_k + \mathbf{0}$ is a point of $(\mindj(M)+1)\cdot \tcQ_A$, moreover, it has integer coordinates.
		By Proposition \ref{prop:interior_point_iff_dijoin_matroid}, 
		$\mathbf q=\frac{1}{\mindj(M)+1}\mathbf p=\sum_{k\in K} \frac{1}{\mindj(M)+1} \mathbf a_k$ is an interior point of $\tcQ_A$. Hence $\mathbf p$ is also an interior point of $(\mindj(M)+1)\cdot \tcQ_A$.
		
		We need to prove that for $s\leq \mindj(M)$ there is no interior lattice point in $s\cdot \tcQ_A$. Suppose that $\mathbf p\in s\cdot \tcQ_A$ is an interior lattice point. Take $\mathbf q=\frac{1}{s} \mathbf p \in\tcQ_A$, which is then an interior point of $\tcQ_A$. By Proposition \ref{prop:interior_point_iff_dijoin_matroid}
		there is a circuit-free dijoin $K$ such that $\mathbf q = \sum_{k\in K} \lambda_k\mathbf a_k$, $\lambda_k > 0$ for each $k\in K$, and $\sum_{k\in K}\lambda_k < 1$. In other words, $K$ is independent in $M$.
		
		We can apply Lemma \ref{lemma:interior_point_integer_coordinates_matroid} for $s$, $K$, and $\mathbf p$. This tells us that for $\mathbf p$ to be an integer vector, $s\lambda_k$ needs to be an integer for each $k\in K$. Hence $s> \sum_{k\in K} s\lambda_k \geq |K|\geq \mindj(M)$.
	\end{proof}
	
	\begin{prop}\label{prop:lattice_point_char_in_dilate_matroid}
		The interior lattice points of $(\nu(M)+1)\cdot \tcQ_A$ are exactly the vectors that can be obtained as $\sum_{i\in K}\mathbf a_i$ for some minimal cardinality dijoin $K$.
	\end{prop}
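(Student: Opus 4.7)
The plan is to run the same two-step argument used in the graph case (Proposition \ref{prop:lattice_point_char_in_dilate}), replacing each graph-theoretic input with its matroid counterpart established earlier in this section. The ingredients are already assembled: Proposition \ref{prop:interior_point_iff_dijoin_matroid} characterizes relative interior points of $\tcQ_A$ via circuit-free dijoins, Lemma \ref{lemma:interior_point_integer_coordinates_matroid} characterizes lattice points in dilates of $\tcQ_F$ for independent sets $F$, and Lemma \ref{lem:min_dijoin_circuitfree} ensures that minimum cardinality dijoins are circuit-free (equivalently, independent in $M$).

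Concretely, I would first scale and apply Proposition \ref{prop:interior_point_iff_dijoin_matroid}: a point $\mathbf p$ lies in the relative interior of $(\nu(M)+1)\cdot\tcQ_A$ if and only if $\mathbf p/(\nu(M)+1)$ lies in the relative interior of $\tcQ_A$, which happens precisely when there is a circuit-free dijoin $K$ and positive reals $\lambda_i$ with $\sum_{i\in K}\lambda_i < \nu(M)+1$ such that $\mathbf p = \sum_{i\in K}\lambda_i\mathbf a_i$. Because $K$ is circuit-free, it is independent in $M$, so Lemma \ref{lemma:interior_point_integer_coordinates_matroid} applies and says that $\mathbf p$ is a lattice point exactly when all $\lambda_i$ are integers.

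Combining these two conditions, each $\lambda_i$ is a positive integer, so $|K|\leq\sum_{i\in K}\lambda_i<\nu(M)+1$, forcing $|K|\leq\nu(M)$. By the definition of $\nu(M)$, equality must hold, and all $\lambda_i$ equal $1$. Thus $K$ is a minimum cardinality dijoin and $\mathbf p=\sum_{i\in K}\mathbf a_i$. Conversely, given any minimum cardinality dijoin $K$, Lemma \ref{lem:min_dijoin_circuitfree} guarantees it is circuit-free, so the linear combination $\sum_{i\in K}\mathbf a_i$ with all $\lambda_i=1$ and $\sum\lambda_i=\nu(M)<\nu(M)+1$ satisfies the criterion of Proposition \ref{prop:interior_point_iff_dijoin_matroid} after dividing by $\nu(M)+1$, exhibiting $\sum_{i\in K}\mathbf a_i$ as an interior lattice point of $(\nu(M)+1)\cdot\tcQ_A$.

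Since each step is a direct invocation of a statement already proved in this section, there is no real obstacle here; the proof is essentially bookkeeping, and the only point deserving slight care is making sure the independence of $K$ allows the application of Lemma \ref{lemma:interior_point_integer_coordinates_matroid} (which requires an independent set in $M$), a condition guaranteed by the circuit-freeness clause in Proposition \ref{prop:interior_point_iff_dijoin_matroid}.
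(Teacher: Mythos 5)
Your proposal is correct and is exactly the argument the paper intends: the paper's proof of this proposition simply refers back to the graph case (Proposition \ref{prop:lattice_point_char_in_dilate}), whose two-step structure — characterize interior points via Proposition \ref{prop:interior_point_iff_dijoin_matroid}, then force integrality of the coefficients via Lemma \ref{lemma:interior_point_integer_coordinates_matroid}, with Lemma \ref{lem:min_dijoin_circuitfree} supplying circuit-freeness of minimum dijoins — is precisely what you carry out. No gaps; your bookkeeping of $|K|\leq\sum_i\lambda_i<\nu(M)+1$ forcing $|K|=\nu(M)$ and all $\lambda_i=1$ matches the paper's reasoning.
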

 
	\begin{proof}
		This statement can be proved the same way as Proposition \ref{prop:lattice_point_char_in_dilate}.
	\end{proof}
	
	\begin{proof}[Proof of Theorem \ref{thm:leading_coeff_matroid}]
		The statement follows from Theorems \ref{thm:degree_of_interior_poly_matroids} and \ref{thm:leading_coeff_of_h^*}, and Proposition \ref{prop:lattice_point_char_in_dilate_matroid}.
	\end{proof}
	
        One may wonder about the relationship of different orientations, 
        in particular whether some analogue of 
	Theorem \ref{thm:orientation_minimising_degree} holds. 

        \begin{defn}
            A matroid is called \emph{bipartite} if each circuit in it has even cardinality.
        \end{defn}
 
	\begin{prob}
		Given a regular matroid, or a bipartite regular matroid, which orientation has the interior polynomial of smallest degree?
		Within the class of bipartite regular matroids, is there an orientation whose interior polynomial is coefficientwise minimal? 
	\end{prob}

\section{Parking function enumerators and greedoid polynomials}
\label{sec:greedoid_and_parking}

In \cite{Eulerian_greedoid} it is proved that the parking function enumerator of an Eulerian digraph can be expressed as the interior polynomial of the cographic matroid. (Prior to that, \cite{semibalanced} settled the planar case, 
which in turn extended \cite[Corollary 5.9]{KP_Ehrhart}.)
Hence the results of the previous section give us information on the degree of the parking function enumerator of an Eulerian digraph and, equivalently, on the 
degree of the lowest term
of the greedoid polynomial. It turns out that similar results 
hold for all directed graphs.

In this section we recall the definition of a greedoid and the relationship between parking function enumerators and interior polynomials, then show how to generalize the result on the degree of the parking function enumerator to all directed graphs.

\subsection{Preliminaries on greedoids and parking functions}

Greedoids were introduced by Korte and Lov\'asz as a 
structure in which
the greedy algorithm works. 
Matroids are a special class of greedoids, but greedoids are able to express connectivity properties that matroids cannot. 

\begin{defn}[greedoid \cite{KorteLovasz}]
	A set system $\mathcal{F}$ on a finite ground set $E$ is called a \emph{greedoid} if it satisfies the following axioms.
	\begin{itemize}
		\item[(1)] $\emptyset \in \mathcal{F}$;
		\item[(2)] for all $X\in \mathcal{F} - \{\emptyset\}$ there exists $x \in X$ such that $X-x\in \mathcal{F}$;
		\item[(3)] if $X,Y\in \mathcal{F}$ and $|X|=|Y|+1$, then there exist an $x\in X-Y$ such that $Y\cup x \in \mathcal{F}$.
	\end{itemize}
	Elements of $\mathcal{F}$ are called \emph{accessible sets}, and maximal accessible sets are called \emph{bases}.
\end{defn}

It follows from the axioms that bases have the same cardinality, 
which is called the \emph{rank} of the greedoid.

An interesting subclass of greedoids is that of directed branching greedoids: 
For a digraph $G$ and its vertex $s$, the \emph{branching greedoid of $G$ rooted at $s$} is the set system consisting of the arborescences of $G$ rooted at $s$. The bases of this greedoid are the maximal arborescences. 

\begin{figure}
	\begin{tikzpicture}[scale=.20]
	\node [] (0) at (-2,10) {$s$};
	\node [circle,fill,scale=.8,draw] (1) at (0,10) {};
	\node [circle,fill,scale=.8,draw] (2) at (4,18) {};
	\node [circle,fill,scale=.8,draw] (3) at (9,14) {};
	\node [circle,fill,scale=.8,draw] (4) at (9,6) {};
	\node [circle,fill,scale=.8,draw] (5) at (4,2) {};		
	\node [circle,fill,scale=.8,draw] (6) at (16,10) {};		
	\path [thick,->,>=stealth] (1) edge [left] node {1} (2);
	\path [thick,dashed,<-,>=stealth] (1) edge [above] node {2} (3);
	\path [thick,->,>=stealth] (1) edge [below] node {3} (4);
	\path [thick,dashed,<-,>=stealth] (1) edge [left] node {4} (5);
	\path [thick,dashed,->,>=stealth] (2) edge [above] node {6} (3);
	\path [thick,<-,>=stealth] (5) edge [below] node {9} (4);
	\path [thick,->,>=stealth] (6) edge [above] node {7} (3);
	\path [thick,<-,>=stealth] (6) edge [below] node {8} (4);
	\path [thick,dashed,<-,>=stealth] (4) edge [left] node {5} (3);
	\end{tikzpicture}
	\caption{Eulerian digraph with root $s$. The non-dashed arcs form a spanning arborescence rooted at $s$.}
 \label{fig:arborescence}
\end{figure}

The greedoid polynomial was introduced by Björner, Korte, and Lovász \cite{greedoid} in several equivalent ways.
Here we recall the definition using activities with respect to a fixed ordering of the edges.

Let $B=\{b_1, \dots, b_r\}$ be a basis of the greedoid. We can form words by concatenating the elements of $B$ in some order: $b_{i_1}b_{i_2}\dots b_{i_r}$. Such a word is called \emph{feasible} if $\{b_{i_1}, \dots, b_{i_j}\}\in \mathcal{F}$ for each $j=1, \dots, r$. Note that the axioms guarantee the existence of at least one feasible word for each basis.
Let us fix an ordering of the ground set $E$. Now to any basis $B$ of the greedoid, one can associate its lexicographically minimal feasible word.

\begin{ex}
    Consider the rooted digraph of Figure \ref{fig:arborescence}, and take the ordering of the edges indicated by the labelling. The edges $\{1,3,7,8,9\}$ form a spanning arborescence, i.e., a basis of the branching greedoid. The word $39187$ is feasible for this basis, but for example the word $89713$ is not (since $\{8\}$ is not an arborescence rooted at $s$). The lexicographically minimal feasible word is $13879$.
\end{ex}

\begin{defn}[external activity for greedoids \cite{greedoid}]
	Let $(E,\mathcal F)$ be a greedoid and fix an ordering of $E$. For a basis $B$, an element $e\notin B$ is \emph{externally active} for $B$ if for any $f\in B$ such that $B \cup e - f \in \mathcal{F}$, the lexicographically minimal feasible word for $B$ is lexicographically smaller than the lexicographically minimal feasible word for $B - f \cup e$. 
	The \emph{external activity} of a basis $B$ is the number of externally active elements for $B$, and it is denoted by $e(B)$. 
\end{defn}

\begin{defn}[greedoid polynomial, \cite{greedoid}]
Using the above, we associate 
	$$\lambda(t) = \sum_{B: \text{ basis}} t^{e(B)}$$
to an arbitrary greedoid.
\end{defn}

We note that this is indeed well-defined, that is, independent of the ordering of the edges used to define the activities.

We will especially be interested in branching greedoids of digraphs.
Swee Hong Chan proves \cite{SweeHong_parking} that the greedoid polynomial of a branching greedoid is a simple transformation of the enumerator of graph parking functions. Let us recall these notions, too.

\begin{defn}[graph parking function]
	For a directed graph $G$ and a fixed root vertex $s$, a graph \emph{parking function} rooted at $s$ is a function $p\in \mathbb{Z}_{\geq 0}^{V-s}$ such that for each $S\subseteq V-s$, there is at least one vertex $u\in S$ with $p(u) < d(V-S,u)$, where $d(V-S,u)$ denotes the number of directed edges leading from $V-S$ to $u$.
	
	We denote the set of these functions by $\Park(G,s)$. For a parking function $p\in\Park(G,s)$, we put $|p|=\sum_{v\in V-s} p(v)$.
\end{defn}

\begin{defn}[parking function enumerator]
	For a directed graph $G=(V,E)$ and a fixed root vertex $s$, the \emph{parking function enumerator} is the polynomial $$\park_{G,s}(x) = \sum_{p \in \Park(G,s)} x^{|p|}.$$
\end{defn}

\begin{figure}
	\begin{tikzpicture}[scale=.60]
	\begin{scope}[shift={(-7.5,0)}]
	\node [] (0) at (-1,0) {$s$};
	\node [] (0) at (1.5,1.2) {$0$};
	\node [] (0) at (1.5,-1.2) {$0$};
	\node [] (0) at (2.9,0) {$0$};
	\node [circle,fill,scale=.6,draw] (1) at (-0.5,0) {};
	\node [circle,fill,scale=.6,draw] (2) at (1,1) {};
	\node [circle,fill,scale=.6,draw] (3) at (1,-1) {};
	\node [circle,fill,scale=.6,draw] (4) at (2.5,0) {};
	\path [thick,->,>=stealth] (1) edge [left] node {} (2);
	\path [thick,->,>=stealth] (1) edge [above] node {} (3);
	\path [thick,->,>=stealth] (2) edge [below] node {} (3);
	\path [thick,->,>=stealth] (3) edge [left] node {} (4);
	\path [thick,->,>=stealth] (2) edge [above] node {} (4);
	\end{scope}
	\begin{scope}[shift={(-2.5,0)}]
	\node [] (0) at (-1,0) {$s$};
	\node [] (0) at (1.5,1.2) {$0$};
	\node [] (0) at (1.5,-1.2) {$1$};
	\node [] (0) at (3,0) {$0$};
	\node [circle,fill,scale=.6,draw] (1) at (-0.5,0) {};
	\node [circle,fill,scale=.6,draw] (2) at (1,1) {};
	\node [circle,fill,scale=.6,draw] (3) at (1,-1) {};
	\node [circle,fill,scale=.6,draw] (4) at (2.5,0) {};
	\path [thick,->,>=stealth] (1) edge [left] node {} (2);
	\path [thick,->,>=stealth] (1) edge [above] node {} (3);
	\path [thick,->,>=stealth] (2) edge [below] node {} (3);
	\path [thick,->,>=stealth] (3) edge [left] node {} (4);
	\path [thick,->,>=stealth] (2) edge [above] node {} (4);
	\end{scope}
	\begin{scope}[shift={(2.5,0)}]
	\node [] (0) at (-1,0) {$s$};
	\node [] (0) at (1.5,1.2) {$0$};
	\node [] (0) at (1.5,-1.2) {$0$};
	\node [] (0) at (2.9,0) {$1$};
	\node [circle,fill,scale=.6,draw] (1) at (-0.5,0) {};
	\node [circle,fill,scale=.6,draw] (2) at (1,1) {};
	\node [circle,fill,scale=.6,draw] (3) at (1,-1) {};
	\node [circle,fill,scale=.6,draw] (4) at (2.5,0) {};
	\path [thick,->,>=stealth] (1) edge [left] node {} (2);
	\path [thick,->,>=stealth] (1) edge [above] node {} (3);
	\path [thick,->,>=stealth] (2) edge [below] node {} (3);
	\path [thick,->,>=stealth] (3) edge [left] node {} (4);
	\path [thick,->,>=stealth] (2) edge [above] node {} (4);
	\end{scope}
	\begin{scope}[shift={(7.5,0)}]
	\node [] (0) at (-1,0) {$s$};
	\node [] (0) at (1.5,1.2) {$0$};
	\node [] (0) at (1.5,-1.2) {$1$};
	\node [] (0) at (2.9,0) {$1$};
	\node [circle,fill,scale=.6,draw] (1) at (-0.5,0) {};
	\node [circle,fill,scale=.6,draw] (2) at (1,1) {};
	\node [circle,fill,scale=.6,draw] (3) at (1,-1) {};
	\node [circle,fill,scale=.6,draw] (4) at (2.5,0) {};
	\path [thick,->,>=stealth] (1) edge [left] node {} (2);
	\path [thick,->,>=stealth] (1) edge [above] node {} (3);
	\path [thick,->,>=stealth] (2) edge [below] node {} (3);
	\path [thick,->,>=stealth] (3) edge [left] node {} (4);
	\path [thick,->,>=stealth] (2) edge [above] node {} (4);
	\end{scope}
	\end{tikzpicture}
\caption{A rooted digraph (with root $s$), and its parking functions.}
\label{fig:parking_function}
\end{figure}

\begin{ex}
Figure \ref{fig:parking_function} shows a rooted digraph and each one of its parking functions. Altogether, the parking funtion enumerator is $x^2+2x+1$. 
\end{ex}

The relationship of the greedoid polynomial and the parking function enumerator is the following.

\begin{thm}\cite[Theorem 1.3]{SweeHong_parking}
	$\lambda_{G,s}(x) = x^{|E|-|V|+1} \park_{G,s}(x^{-1})$
\end{thm}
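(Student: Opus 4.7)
The plan is to produce an explicit bijection
$\Phi\colon \Arb(G,s)\to \Park(G,s)$ that relates the two statistics as
\[
e(T)+|\Phi(T)| \;=\; |E|-|V|+1
\qquad\text{for every spanning arborescence }T.
\]
Summing $t^{e(T)}$ over $T\in\Arb(G,s)$ on the left and collecting by $|\Phi(T)|$ on the right then gives the identity immediately. So the substance of the proof will lie entirely in constructing $\Phi$ and verifying this single equation.

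First I would fix an ordering on $E$, and for a given arborescence $T$ analyze its lex-minimal feasible word $w(T)=e_{i_1}e_{i_2}\cdots e_{i_{|V|-1}}$. Each non-tree edge $e\in E\setminus T$ is either externally active or not. If $e=\overrightarrow{uv}$ is not externally active, then by definition some $f\in T$ points at $v$ and the word $w(T-f\cup e)$ is lex-smaller than $w(T)$; I would assign this $e$ to the vertex $v$. The candidate parking function is then
\[
\Phi(T)(v)\;=\;\#\{\,e\in E\setminus T\mid e\text{ is not externally active for }T\text{ and is assigned to }v\,\}.
\]
By construction $|\Phi(T)|=|E\setminus T|-e(T)=|E|-|V|+1-e(T)$, so the statistic matches. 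What remains is to verify (i) that $\Phi(T)$ really is a parking function and (ii) that $\Phi$ is bijective.

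For (i), I would argue by contradiction: if $S\subseteq V-s$ violated the parking inequality, then every $v\in S$ would receive strictly fewer incoming non-active edges than $d(V-S,v)$, forcing enough swaps $T-f\cup e$ (with $f$ the $T$-edge entering $v\in S$ and $e$ a suitable edge from outside $S$) to build a lex-smaller feasible word than $w(T)$, contradicting minimality. For (ii), the inverse $\Phi^{-1}$ would be constructed by a Dhar-style burning process on $(G,s)$: starting from $\{s\}$, iteratively add a vertex $v\notin$ burnt whose count of already-burnt in-edges exceeds $p(v)$, using the ordering to choose, at the moment $v$ is burnt, a specific incoming edge as its parent in $T$. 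The parking-function inequality is exactly what guarantees this algorithm can always proceed until every vertex is burnt, producing a spanning arborescence.

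The main obstacle will be the careful bookkeeping needed to show that the burning recovery and the lex-minimal-word activity coincide, i.e.\ that $\Phi^{-1}$ inverts $\Phi$ on the nose. This boils down to checking that the order in which vertices burn matches (in a suitable sense) the order in which they first appear in the lex-minimal feasible word of the reconstructed arborescence, and that an edge is declared externally active if and only if it is \emph{not} selected as a parent nor contributes to $p(v)$ during burning. Once this is established, the statistic equation is automatic and the theorem follows. An alternative, less constructive approach would be induction on $|E|$ using a deletion--contraction recursion satisfied by both sides, reducing to the empty greedoid; but a direct bijective proof is cleaner and yields combinatorial insight into how externally active edges correspond to the ``slack'' in the parking inequalities.
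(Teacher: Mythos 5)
This statement is not proved in the paper at all: it is quoted verbatim from Swee Hong Chan's work (the citation \cite{SweeHong_parking}), so there is no in-paper argument to compare yours against; what you are proposing is, in effect, a new proof of Chan's theorem. Your overall strategy --- a bijection $\Phi\colon\Arb(G,s)\to\Park(G,s)$ with $e(T)+|\Phi(T)|=|E|-|V|+1$, inverted by a Dhar-type burning process --- is the natural (and essentially the known) route, and the bookkeeping $|\Phi(T)|=|E|-|V|+1-e(T)$ is indeed automatic once each externally passive edge is charged to its head. But as written the two steps that carry all the content are missing. First, the verification that $\Phi(T)$ is a parking function is not actually argued: your contradiction sketch misstates the negation of the parking condition (a violating set $S$ is one in which every $v\in S$ receives \emph{at least} $d(V-S,v)$ passive in-edges, not ``strictly fewer''), and ``contradicting minimality of $w(T)$'' is not a contradiction of any identified statement --- producing a lex-smaller feasible word for a \emph{different} basis contradicts nothing. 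The genuine difficulty here is that $\Phi(T)(v)$ counts passive in-edges of $v$ coming from \emph{everywhere}, including from inside the putative violating set $S$, whereas the parking inequality only compares against edges entering $S$ from outside; your sketch never engages with this mismatch, and resolving it requires a local criterion for greedoid activity (Chan's semi-activity: the maximal edge of the fundamental cycle $C(T,e)$ stands parallel to $e$), which you never derive or invoke, together with care for edges whose exchange $T\cup e-f$ is never an arborescence (e.g.\ edges into $s$, or edges closing a directed cycle with the tree path), which are vacuously active.

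Second, the inverse map is not defined: you say the burning process chooses, ``using the ordering,'' an incoming edge as parent when a vertex burns, but you do not say which edge (maximal burnt in-edge? minimal? first to exceed $p(v)$?), and different rules give different arborescences; without a specified rule there is nothing to prove inverts $\Phi$, and with a specified rule the claim that burning order, parent selection, and lex-minimal-word activity match is precisely the theorem --- you yourself flag this as ``the main obstacle'' and leave it open. So the proposal is a plausible plan rather than a proof: to complete it you would need (i) a proof that greedoid external activity for branching greedoids coincides with the fundamental-cycle semi-activity criterion, (ii) a proof that counting semi-passive in-edges yields a parking function, and (iii) a concretely specified burning inverse with a proof that it undoes $\Phi$ and recovers the activity count. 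Alternatively, for the purposes of this paper you could simply cite \cite{SweeHong_parking}, as the authors do.
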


The second named author has
previously observed the following connection.

\begin{thm}\cite{Eulerian_greedoid}\label{thm:greedoid_poly_is_interior_poly_of_dual}
	Let $G$ be a connected Eulerian digraph, and let $M$ be the directed dual matroid of $G$. Then 
	\begin{equation*}
	\lambda_{G,s}(x)=
	x^{|E(G)|-|V(G)|+1} I_{M}(x^{-1}) \quad\quad \text{and}\quad\quad \park_{G,s}(x)=I_{M}(x).
	\end{equation*}
\end{thm}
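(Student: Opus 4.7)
The plan is to focus on proving the identity $\park_{G,s}(x) = I_M(x)$; the companion formula for $\lambda_{G,s}$ then follows immediately from the transformation $\lambda_{G,s}(x) = x^{|E|-|V|+1}\park_{G,s}(x^{-1})$ recalled just before the theorem. I would begin by realizing the cographic matroid $M$ by a concrete totally unimodular matrix $A$: fix a spanning tree $T$ of $G$, and for each $e \in T$ let the corresponding row of $A$ encode the signed fundamental cut $C^*(T,e)$, with signs read off from the orientations of edges crossing the cut relative to $e$. This $A$ represents $M$, and the extended root polytope $\tilde{\mathcal Q}_M = \conv(\{\mathbf 0\} \cup \{\mathbf a_e : e \in E\})$ has dimension $|E|-|V|+1 = \rank(M)$. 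A key observation is that the Eulerian hypothesis translates to $M$ being co-Eulerian in the sense of Section \ref{sec:interior_regular_matroids}: circuits of $M$ are minimal cuts of $G$, and in an Eulerian digraph every cut has equally many edges crossing in either direction, so $|C^+|=|C^-|$ for each circuit of $M$. This ensures that the root polytope and the extended root polytope of $M$ share their $h^*$-polynomial and that the triangulation below is geometrically well-behaved.

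The heart of the argument is to construct a unimodular triangulation of $\tilde{\mathcal Q}_M$ whose maximal simplices are indexed by the spanning arborescences of $G$ rooted at $s$. For each such arborescence $B$, the complement $E-B$ is a basis of $M$; the corresponding columns of $A$ together with $\mathbf 0$ form a unimodular simplex by the matroid analogue of Lemma \ref{lem:unimodular_simplex}. The natural way to see that these simplices tile $\tilde{\mathcal Q}_M$ is as a pulling or regular triangulation induced by a generic weight on edges (for instance, a BFS distance from $s$), verifying via the Eulerian / out-regular condition that the resulting top-dimensional cells are exactly those indexed by $s$-rooted arborescences, and no others.

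Finally, I would compute the $h$-vector of the triangulation by shelling: for each spanning arborescence $B$, count the number of facets of its simplex newly covered when $B$ is added in a carefully chosen shelling order. The assertion is that, under a suitable bijection between spanning arborescences of an Eulerian digraph and graph parking functions (for example a Chan--Glass--Kang or Chebikin--Pylyavskyy type bijection), this shelling statistic matches the weight $|p_B|$ of the corresponding parking function, and summing gives $h^*_{\tilde{\mathcal Q}_M}(x) = \park_{G,s}(x)$. The main obstacle is aligning the combinatorial bijection with a geometric shelling order so that the two weight statistics coincide, and this is precisely where the Eulerian hypothesis does the heavy lifting: for general digraphs the numbers of spanning arborescences and parking functions no longer agree, so neither the bijection nor the clean Ehrhart identification survives.
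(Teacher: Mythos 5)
This theorem is not proved in the present paper at all: it is quoted from \cite{Eulerian_greedoid}, and the remark that follows it records that the proof there runs exactly along the lines you propose, namely triangulating the root polytope of the cographic matroid $M$ by the simplices spanned by the complements of the $s$-rooted spanning arborescences and then reading off the $h$-vector. So your overall strategy is the intended one, but as written it has two genuine gaps.

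First, your representing matrix is the wrong one: a matrix whose rows are the signed fundamental cuts $C^*(T,e)$, $e\in T$, has row space equal to the cut space of $G$, so its columns represent the \emph{graphic} matroid, of rank $|V|-1$; to represent the cographic matroid $M$ you need rows spanning the cycle space (e.g., the signed fundamental cycles $C(T,f)$ for $f\notin T$), which is also what makes $\dim\tilde{\mathcal Q}_M=|E|-|V|+1$ as you assert. Second, the two central steps are claimed rather than argued. That the arborescence-complement simplices actually fill $\tilde{\mathcal Q}_M$ is precisely where the Eulerian hypothesis does its work and it is genuinely nontrivial: the paper's later remark in Section \ref{sec:greedoid_and_parking} shows that for non-Eulerian root-connected digraphs the very same family of simplices is pairwise disjoint but fails to cover the polytope (its union need not even be convex), so a generic pulling/regular-triangulation argument cannot give this ``for free''; one must prove covering, e.g.\ by showing every point of $\tilde{\mathcal Q}_M$ lies in the simplex of some arborescence complement. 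Likewise, the final identification of the shelling statistic with $|p_B|$ via an unspecified bijection to parking functions is not yet an argument; a cleaner finish, using only tools you already invoke, is to show that the $h$-vector of the (unimodular) triangulation equals the external semi-activity generating function $\sum_B x^{e(B)}=\lambda_{G,s}(x)$ and then apply Chan's identity $\lambda_{G,s}(x)=x^{|E|-|V|+1}\park_{G,s}(x^{-1})$ from \cite{SweeHong_parking}, which yields both displayed formulas at once.
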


Hence if $G$ is Eulerian, we can use Theorem \ref{thm:degree_of_interior_poly} to obtain a formula for the degree of the parking function enumerator, or equivalently, a formula for the 
degree of the lowest term
of the greedoid polynomial.

\begin{proof}[Proof of Theorem \ref{thm:Eulerian_degree}]
	For the (directed) cographic matroid $M$ of $G$, a dijoin of $M$ (that is, a set of edges intersecting each directed cocircuit) corresponds to an edge set of $G$ that intersects each directed cycle. Hence dijoins of $M$ correspond to feedback arc sets of $G$. Now Theorem \ref{thm:greedoid_poly_is_interior_poly_of_dual} implies the statement of the theorem.
\end{proof}

\begin{remark}
The analogue of Theorem \ref{thm:greedoid_poly_is_interior_poly_of_dual} is not true for general (non-Eulerian) digraphs. Indeed, for non-Eulerian digraphs, the parking function enumerator \emph{does} depend on the root $r$, while the interior polynomial of the dual does not.

As an example, take the digraph $G$ that has two vertices $s$ and $u$, one edge from $s$ to $u$, and two edges from $u$ to $s$.
As $G$ is planar, its dual is also a graphic matroid, associated to the digraph $G^*=(\{v_1,v_2,v_3\}, \{\overrightarrow{v_1v_2},\overrightarrow{v_2v_3},\overrightarrow{v_1v_3}\})$. 
The unique parking function of $G$ rooted at $s$ associates 0 to $u$, whereby the parking function enumerator is $\park_{G,s}(x)=1$. However, it is easy to compute that $I_{G^*}(x)=1+x$.
\end{remark}

\begin{remark}
    In \cite{Eulerian_greedoid}, 
    Theorem \ref{thm:greedoid_poly_is_interior_poly_of_dual} is proved by noting
    that the complements of the arborescences of $G$ (that are bases in the cographic matroid $M$ of $G$) induce a triangulation of the root polytope of $M$. If we consider a non-Eulerian rooted digraph, then we can still associate the extended root polytope of $M|_{E-F}$ to any arborescence $F$, and this will be a simplex in $\tcQ_M$. If we take these simplices for each arborescence rooted at $s$, then they will be mutually disjoint, but they will typically not fill $\tcQ_M$. In fact, their union is not even necessarily convex.
    However it is easy to check that the union covers a neighborhood of $\mathbf 0$ within $\tcQ_M$.

   Take for example the planar digraph $G$ on two vertices $s$ and $u$, with two edges from $s$ to $u$ and three edges from $u$ to $s$.
Then the dual can be given as $G^*=(\{v_1,v_2,v_3,v_4,v_5\}, \{\overrightarrow{v_1v_2},\overrightarrow{v_2v_3},\overrightarrow{v_3v_5},\overrightarrow{v_1v_4},\overrightarrow{v_4v_5}\})$. In $G$ there are two arborescences rooted at $s$, and their respective complements in $G^*$ are $\{\overrightarrow{v_1v_2},\overrightarrow{v_2v_3},\overrightarrow{v_3v_5},\overrightarrow{v_4v_5}\}$ and $\{\overrightarrow{v_1v_2},\overrightarrow{v_2v_3},\overrightarrow{v_3v_5},\overrightarrow{v_1v_4}\}$. The resulting two simplices are the convex hulls of $\mathbf 0$ and the vectors corresponding to these edges.
Now the points $\mathbf p_1=\mathbf{1}_{v_5}-\mathbf{1}_{v_4}$ and $\mathbf p_2=\mathbf{1}_{v_4}-\mathbf{1}_{v_1}$ are in the respective simplices, on the other hand it is easy to check that $(\mathbf p_1+\mathbf p_2)/2=(1/2)\cdot (\mathbf{1}_{v_5}-\mathbf{1}_{v_1})$ is not contained in either of the two simplices. That is, the union of these two simplices is not convex.
\end{remark}

Nevertheless, Theorem \ref{thm:Eulerian_degree} can still be generalized to any rooted digraph. For this, we first give a formula for the degree of the lowest term of the greedoid polynomial for general greedoids.

\subsection{General greedoids}

To give a formula for the degree of the lowest term
of an arbitrary greedoid polynomial, let us make two easy observations.

\begin{defn}
Let $X=(E,\mathcal{F})$ be a greedoid, and let $S\subseteq E$. We define $X|_S$ as the pair $(S, \mathcal{F}|_S)$, where $\mathcal{F}|_S=\{A\in\mathcal{F} \mid A\subseteq S\}$, and call it the \emph{restriction} of $X$ to $S$.
\end{defn}

\begin{claim} $X|_S$ is a greedoid.
\end{claim}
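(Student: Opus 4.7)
The plan is to verify the three greedoid axioms for $X|_S=(S,\mathcal{F}|_S)$ directly, using that $\mathcal{F}|_S\subseteq\mathcal{F}$ and that membership in $\mathcal{F}|_S$ differs from membership in $\mathcal{F}$ only by the extra condition of being a subset of $S$. No nontrivial idea is needed; the point is simply that each axiom is preserved because the witnessing elements produced by the axioms for $\mathcal{F}$ automatically stay inside $S$.

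Concretely, I would proceed as follows. First, axiom (1) is immediate since $\emptyset\in\mathcal{F}$ and $\emptyset\subseteq S$, so $\emptyset\in\mathcal{F}|_S$. Next, for axiom (2), take any $A\in\mathcal{F}|_S-\{\emptyset\}$; then $A\in\mathcal{F}$, so axiom (2) applied in $X$ provides some $x\in A$ with $A-x\in\mathcal{F}$, and since $A\subseteq S$ we also have $A-x\subseteq S$, hence $A-x\in\mathcal{F}|_S$. Finally, for axiom (3), take $A,B\in\mathcal{F}|_S$ with $|A|=|B|+1$; regarding them as elements of $\mathcal{F}$, axiom (3) gives some $x\in A-B$ with $B\cup x\in\mathcal{F}$, and the inclusions $B\subseteq S$ and $x\in A\subseteq S$ yield $B\cup x\subseteq S$, so $B\cup x\in\mathcal{F}|_S$.

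There is no real obstacle here: the argument is a one-line verification per axiom, and the only thing worth noting is the closure-under-subsets of the property ``is contained in $S$,'' which is what makes each witness produced by the axioms for $X$ transfer verbatim to $X|_S$.
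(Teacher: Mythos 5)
Your verification is correct and is exactly the routine check the paper leaves to the reader (its proof is just ``It is easy to check that the axioms are true''); the key observation, that every witness set produced by the axioms for $\mathcal{F}$ is a subset of a set contained in $S$ and hence lies in $\mathcal{F}|_S$, is spelled out correctly for all three axioms.
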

\begin{proof}
It is easy to check that the axioms are true.
\end{proof}

\begin{claim}
Let $X=(E,\mathcal{F})$ be a greedoid and $S\subseteq E$. Fix an ordering of the elements of $E$, and its restriction to $S$. 
Suppose that $F$ is a basis of both $X$ and $X|_S$. An element $e\in S-F$ is externally active for $F$ in $X|_S$ (with respect to the above mentioned ordering) if and only if $e$ is externally active for $F$ in $X$.
\end{claim}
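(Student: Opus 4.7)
The plan is to show that every ingredient entering the definition of external activity for $e$ and $F$ — feasibility of subsets, the lex-minimal feasible word of $F$, the set of admissible swap partners $f$, and the lex-minimal feasible word of each swapped basis $F \cup \{e\} - \{f\}$ — coincides literally between $X$ and $X|_S$. The key reason is that $F \cup \{e\} \subseteq S$, so every subset appearing in the definitions lives inside $S$, and by the very definition of $X|_S$ such a subset lies in $\mathcal F$ if and only if it lies in $\mathcal F|_S$.

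First I would record the trivial but pivotal observation: for any $A \subseteq S$ one has $A \in \mathcal F$ iff $A \in \mathcal F|_S$. This at once implies that for any ordering of $F$, the resulting word is feasible in $X$ iff feasible in $X|_S$, because every prefix is a subset of $F \subseteq S$. Since the ordering on $S$ is the restriction of the ordering on $E$, the notion of lexicographic minimum is the same. Therefore the lex-minimal feasible word for $F$ is literally the same object in $X$ and $X|_S$. The same reasoning, applied to $F \cup \{e\} - \{f\} \subseteq S$, shows that whenever this set is a basis in either greedoid, its lex-minimal feasible word is also identical in both.

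Next I would compare the set of admissible swap partners. For $f \in F$, the condition $F \cup \{e\} - \{f\} \in \mathcal F$ is equivalent to $F \cup \{e\} - \{f\} \in \mathcal F|_S$ by the subset observation above. Thus the collection of $f$ entering the external-activity test is the same in $X$ and $X|_S$. Combined with the previous paragraph, the comparison ``lex-min word for $F$ is lex-smaller than lex-min word for $F \cup \{e\} - \{f\}$'' holds in $X$ iff it holds in $X|_S$, which yields the claimed equivalence.

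The only point requiring slight care is that the external-activity test implicitly treats the swapped set $F \cup \{e\} - \{f\}$ as a basis, and one must verify that it is a basis of $X|_S$ iff it is a basis of $X$. But $F$ is assumed to be a basis of both greedoids, so they share the same rank; together with the accessibility equivalence above, a set of size $|F|$ contained in $S$ that is accessible in one is a basis in both. Thus the main obstacle is essentially just bookkeeping — there is no substantive content beyond the inclusion $F \cup \{e\} \subseteq S$.
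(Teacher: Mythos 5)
Your argument is correct and is essentially the paper's own (one-line) proof: external activity of $e$ for $F$ only involves subsets of $F\cup\{e\}\subseteq S$, and on such subsets feasibility, lex-minimal feasible words, and the set of admissible swaps agree in $X$ and $X|_S$. Your expanded bookkeeping, including the check that $F\cup\{e\}-\{f\}$ is a basis of one greedoid iff of the other, is a faithful elaboration of that same observation.
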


\begin{proof}
The definition of external activity only considers bases that are subsets of $F\cup e$, and those are the same for $X|_S$ as for $X$.
\end{proof}

\begin{customthm}{\ref{thm:greedoid_nonzero_terms}}
Let $X=\{E,\mathcal{F}\}$ be a greedoid of rank $r$. Let $k=\min \{|S|\mid S\subseteq E\text{ with } \rank(X|_{E-S})=r\text{ and } \lambda_{X|_{E-S}}(0)\neq 0\}$. Then in the greedoid polynomial of $X$, the coefficient of $x^i$ is zero for $i=0, \dots, k-1$ and the coefficient of $x^k$ is nonzero.
\end{customthm}

\begin{proof}
	Take an arbitrary basis $B$ of $X$, and let $P$ be the set of elements of $E-B$ that are externally passive for $B$. We claim that $B\cup P$ is a set such that $\rank(X|_{B\cup P})=r$ and $\lambda_{X|_{B\cup P}}(0)\neq 0$. The claim
	$\rank(X|_{B\cup P})=r$ follows immediately since $B$ is a basis of $X$. On the other hand, since elements of $P$ were all externally passive for $B$ in $X$, this remains so for $X|_{B\cup P}$, thus, in $X|_{B\cup P}$ there are no externally active elements for $B$. This shows that $\lambda_{X|_{B\cup P}}(0)\neq 0$. Hence $k\leq |E-B-P|$ for any basis $B$ of $X$. As the external activity of $B$ is $e_X(B)=|E-B-P|$, this shows that if the coefficient of $x^i$ is positive in the greedoid polynomial, then $i\geq k$.
	
	It remains to show that the coefficient of $x^k$ is positive.
	Take a set $S\subseteq E$ with $|S|=k$ such that $\rank(X|_{E-S})=r$ and $\lambda_{X|_{E-S}}(0)\neq 0$. We show that there exists a basis $B$ such that $e_X(B)\leq |S|$. Since we have already proved that we cannot have $e_X(B)<k$, this will complete the proof.
	
	As $\rank(X|_{E-S})=r$, each basis of $X|_{E-S}$ is a basis of $X$. 
    Since $\lambda_{X|_{E-S}}(0)\neq 0$, the greedoid $X|_{E-S}$ has a basis $B$ such that $e_{X|_{E-S}}(B)=0$. That is, all elements of $E-S-B$ are externally passive in $B$. Hence, in $X$, only the elements of $S$ can be externally active for $B$, thus indeed, $e_X(B)\leq |S|= k$.
\end{proof}

\subsection{Arbitrary directed graphs} \label{ssec:parking_degree_nonEulerian}

In this section we use the result of the previous section to generalize Theorem \ref{thm:Eulerian_degree} to arbitrary digraphs. Let $G$ be a digraph, and let $s$ be an arbitrary fixed vertex of $G$.

Note that when examining the greedoid polynomial,
we can suppose that $G$ is $s$-root-connected, i.e., that each vertex of $G$ is reachable on a directed path from $s$. (This property is equivalent to $G$ having a spanning arborescence rooted at $s$.) Indeed, if $G$ is not $s$-root-connected, then the bases of the branching greedoid of $G$ rooted at $s$ will be the same as the bases of the branching greedoid of $G'$ rooted at $s$, where $G'$ is the subgraph of $G$ spanned by vertices reachable on a directed path from $s$. The edges of $G-G'$ will be externally semi-active for any basis, and any edge ordering, that is in this case $\park_{G}(x)=\park_{G'}(x)$ and $\lambda_{G}(x)=x^{|E(G)|-|E(G')|}\lambda_{G'}(x)$.
Hence from now on, we may suppose that $G$ is $s$-root-connected.

Swee Hong Chan \cite{SweeHong_parking} points out that in the case of directed branching greedoids, the 
notion of
greedoid activity 
can be rephrased in a more graph theoretic way. For this we need some additional definitions.

Let $T$ be a spanning tree in the digraph $G$.
For an edge $e\notin T$, we say that $e'\in C(T,e)$
\emph{stands parallel} to $e$ in the fundamental cycle $C(T,e)$ if $e$ and $e'$  point in the same cyclic direction. 
Otherwise we say that $e'$ \emph{stands opposite} to $e$. (See Example \ref{ex:semi_passivity}.)

\begin{defn}[external semi-activity in digraphs, \cite{SweeHong_parking}] 
	Let $G$ be an $s$-root-connected digraph with a fixed ordering of the edges. Let $A$ be a spanning arborescence rooted at $s$ in $G$. An arc $e\notin A$ is \emph{externally semi-active} for $A$ if in the fundamental cycle $C(A,e)$ the maximal edge (with respect to the fixed ordering) stands parallel to $e$. If $e\notin A$ is not externally semi-active for $A$, then we call it \emph{externally semi-passive}.
\end{defn}

The name semi-activity comes from Li and Postnikov \cite{LiPostnikov}, who introduced this notion independently from the greedoid context.

\begin{ex}\label{ex:semi_passivity}
	For the rooted spanning arborescence in Figure \ref{fig:arborescence}, and the indicated edge ordering, edge number $2$ is externally semi-active. Indeed, its fundamental cycle is $\{2,3,8,7\}$, along which the maximal edge, $8$, stands parallel to $2$. On the other hand, $6$ is externally semi-passive, because its fundamental cycle is $\{6,1,3,8,7\}$, and among these, the maximal edge $8$ stands opposite to $6$.
\end{ex}

\begin{customthm}{\ref{thm:parking_enum_deg_general_digraph}}
	Let $G=(V,E)$ be an $s$-root-connected digraph.
	The degree of the parking function enumerator of $G$ rooted at $s$ is equal to $|E| - |V| +1 - \minfas(G,s)$.
	
	Equivalently, for the greedoid polynomial of the branching greedoid of $G$ rooted at $s$, the coefficients of $x^0, \dots, x^{k-1}$ are zero, and the coefficient of $x^{k}$ is nonzero for $k=\minfas(G,s)$. 
\end{customthm}

Recall that $\minfas(G,s)$ is the minimal cardinality of an edge set whose removal leaves an $s$-root-connected acyclic digraph (Definition \ref{def:minfas(G,s)}). It follows from Theorems \ref{thm:Eulerian_degree} and \ref{thm:parking_enum_deg_general_digraph} that for a connected Eulerian digraph and an arbitrary vertex $s$, $\minfas(G)=\minfas(G,s)$. It is also not very hard to prove this directly.

For general digraphs, $\minfas(G)$ and $\minfas(G,s)$ might differ. For example if $G$ has two vertices, $s$ and $v$, with one edge from $s$ to $v$, and two edges from $v$ to $s$, then  $\minfas(G)=1$ but $\minfas(G,s)=2$.

\begin{remark}
	For the interior polynomial (consequently, also for the parking function enumerator of Eulerian digraphs), we had a definition using Ehrhart theory. For the parking function enumerator of general digraphs we are unaware of such a definition. Hence in the proof of Theorem \ref{thm:parking_enum_deg_general_digraph}, we need to use a different, combinatorial argument.
\end{remark}

As we remarked in the introduction, Theorem \ref{thm:parking_enum_deg_general_digraph} strengthens a theorem of Björner, Korte, and Lovász, who  showed in \cite{greedoid} that for the branching greedoid of an $s$-root-connected digraph, the greedoid polynomial has a nonzero constant term if and only if the digraph is acyclic. They in fact prove more than this statement, also establishing the equivalence of this property to certain topological assumptions. For the sake of self-containedness, we give a short proof of the part of their statement that we will use.

\begin{lemma}\cite{greedoid}\label{lemma:constant_term_directed_graphs}
	Let $G$ be an $s$-root-connected digraph. The greedoid polynomial $\lambda_{G,s}$ has nonzero constant term if and only if $G$ is acyclic.
\end{lemma}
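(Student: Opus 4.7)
The constant term of $\lambda_{G,s}$ is the number of spanning arborescences $A \in \Arb(G,s)$ with external activity $e(A) = 0$, computed with respect to any fixed ordering of $E$ using the semi-activity description of Section \ref{ssec:parking_degree_nonEulerian} (the polynomial itself being ordering-independent). Thus $\lambda_{G,s}(0) \neq 0$ iff some such $A$ exists for some---equivalently, for every---ordering. I will prove the two directions separately.

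For the ``$G$ acyclic $\Rightarrow$ constant term nonzero'' direction, fix a topological ordering $s = v_1, \ldots, v_n$ of $V$. For each $k \geq 2$, use $s$-root-connectivity to pick an in-edge $f_k$ of $v_k$; its tail is some $v_i$ with $i < k$ by the topological order. Set $A = \{f_2, \ldots, f_n\}$, which is a spanning arborescence rooted at $s$. Order $E$ so that edges with head $v_k$ precede those with head $v_l$ whenever $k < l$, and so that among the edges sharing head $v_k$ the tree edge $f_k$ comes last. For any non-tree $e = \overrightarrow{v_iv_j}$, every edge on the tree path from $v_i$ to $v_j$ is of the form $f_k$, and---since topological indices strictly increase along each half of this path---has $k \leq j$, with $k = j$ only for $f_j$ itself. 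The tie-breaking rule makes $f_j$ the maximum of $C(A,e)$, and since $f_j$ and $e$ both point into $v_j$, $f_j$ stands opposite to $e$ in $C(A,e)$; so $e$ is semi-passive. Hence $e(A) = 0$.

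For the converse, suppose $G$ contains a directed cycle $C$; fix any $A \in \Arb(G,s)$ and any edge ordering, and let $e^* = \max C$. I will derive a contradiction by constructing an infinite strictly increasing chain of tree edges $g_0 < g_1 < \cdots$, all exceeding $e^*$ (and hence outside $C$). Two ingredients are used: the cycle-cut orthogonality (for $g \in A$ and $f \notin A$, $g \in C(A,f)$ iff $f \in C^*(A,g)$, with ``parallel'' in the cycle corresponding to ``opposite'' in the cut), and the fact that $A \cap C^*(A,g) = \{g\}$ for every $g \in A$. To start the chain: if $e^* \notin A$, semi-passivity of $e^*$ yields $g_0 := \max C(A,e^*) \in A$ with $g_0 > e^*$; if $e^* \in A$, then since the directed cycle $C$ crosses $C^*(A,e^*)$ equally in both directions, some $e' \in C \setminus A$ goes opposite to $e^*$ in the cut, which by orthogonality places $e^*$ in $C(A,e')$ parallel to $e'$, and semi-passivity of $e'$ supplies $g_0 > e^*$. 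Inductively, if $g_i \in A$ arose as the max of $C(A,e_i)$ for some $e_i \in C \setminus A$, then $e_i$ stands parallel to $g_i$ in $C^*(A,g_i)$, so the directedness of $C$ provides an edge $e_{i+1} \in C \cap C^*(A,g_i)$ going the other way; $e_{i+1}$ is non-tree by the unique-tree-edge property, orthogonality places $g_i$ in $C(A,e_{i+1})$ parallel to $e_{i+1}$, and semi-passivity of $e_{i+1}$ yields $g_{i+1} := \max C(A,e_{i+1}) > g_i$.

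The main obstacle is the orientation bookkeeping in the cycle-cut duality: verifying the parallel/opposite swap and then picking at each step the crossing $e_{i+1}$ on the correct side of $C^*(A,g_i)$, so that the prior tree edge $g_i$ lands in the ``parallel-to-$e_{i+1}$'' position of the new fundamental cycle. This is precisely what keeps $g_{i+1} \neq g_i$ and makes the chain strictly increasing.
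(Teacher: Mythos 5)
Your proof is correct, but it follows a genuinely different route from the paper in both directions. For ``acyclic $\Rightarrow$ nonzero constant term,'' the paper works with the original greedoid definition of activity and an arbitrary fixed ordering: it takes the spanning arborescence whose lexicographically minimal feasible word is lexicographically maximal and shows directly (using acyclicity to produce an exchange $A\cup e-f$ for every non-tree $e$) that it has zero external activity. You instead build a concrete witness from a topological order, together with a tailored edge ordering (head-index blocks, tree edge last in its block) under which every non-tree edge is semi-passive, and then invoke the ordering-independence of $\lambda_{G,s}$ plus the semi-activity characterization; this is more hands-on but ties you to one special ordering, whereas the paper's argument works for any ordering and does not need the semi-activity reformulation in this direction. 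For the converse, both arguments prove the same strengthening --- that for every spanning arborescence the externally semi-active edges meet every directed cycle --- but by different means: the paper decomposes the directed cycle $C$ in the real cycle space into fundamental cycles with coefficients $1$ and extracts a semi-active $e_i\in C$ by examining the overall maximal edge, while you run an escalation argument through fundamental cut/cycle orthogonality, producing a strictly increasing chain of tree edges $g_0<g_1<\cdots$ and contradicting finiteness of $E$. Your route avoids the cycle-space linear algebra at the cost of careful orientation bookkeeping (which you handle correctly: ``parallel in the cycle'' does correspond to ``opposite in the fundamental cut,'' and the balanced crossing of a directed cycle with any cut supplies the next non-tree edge at each step). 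One small presentational point: state explicitly at the outset of the converse that you assume, for contradiction, that all edges of $C\setminus A$ are semi-passive (i.e.\ that $A$ would contribute to the constant term); your chain construction uses exactly this hypothesis, and making it explicit clarifies that halting of the chain is precisely the discovery of a semi-active edge on $C$.
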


\begin{proof}
	Suppose that $G$ is acyclic and fix an arbitrary ordering of its edges. In order to show that $\lambda_{G,s}(0)\neq 0$, we need to find a spanning arborescence (rooted at $s$) with $0$ external activity. 
	
	Consider the spanning arborescence $A$ whose lexicographically minimal feasible word is lexicographically maximal. 
 For any edge $e\notin A$, if there is any arborescence of the form $A'= A \cup e - f$, then the lexicographically minimal feasible word for $A'$ can only be smaller than that of $A$. Hence it suffices to show that there is always such an arborescence $A'$. As $A$ is a spanning arborescence, it is a tree with a unique directed path from $s$ to any vertex. 
 If we add an edge $e=\overrightarrow{uv}$ to $A$, then we create a unique cycle. 
 Since we assumed that $G$ is acyclic, this cycle is not directed, in particular $v\ne s$.
 By removing the unique in-edge $f$ of $v$ that $A$ contains,
 we once again get a tree with exactly one directed path to each vertex, i.e., $A'=A \cup e - f$ is an arborescence. 
 With this, we have finished proving that $e(A)=0$.
	
	Now we show that if the $s$-root-connected graph $G$ is not acyclic, then $\lambda_{G,s}(0)=0$.
	We do this by proving that for any spanning arborescence $A$, the externally semi-active edges form a feedback arc set of $G$, whereby if $G$ is not acyclic, the set of externally semi-active edges cannot be empty for any spanning arborescence.
	
    Let $A$ be an arbitrary spanning arborescence (rooted at $s$), and let $C$ be an arbitrary directed cycle of $G$. 
    We will work in the cycle space within $\mathbb{R}^E$, which is generated by the vectors $\sum_{e\in C^+}\mathbf{1}_e -\sum_{e\in C^-} \mathbf{1}_e$ taken for all cycles $C\subseteq G$.  
    As the fundamental cycles of $A$ form a basis of the cycle space of $G$ over $\mathbb{R}$, 
    the cycle $C$ can be written (with a slight abuse of notation) as a combination of some fundamental cycles: $C=\lambda_1 \cdot C(A,e_1)+ \dots + \lambda_j \cdot C(A,e_j)$, where each coefficient is nonzero. 
    Some edges of these fundamental cycles might cancel out, but notice that $e_1, \dots, e_j$ only occur in their respective fundamental cycles, wherefore they are all part of $C$.
    We may suppose without loss of generality that each $e_i$ is positive in $C(A,e_i)$ and $C=C^+$. This implies
    $\lambda_1=\dots=\lambda_j=1$. 
    
    Let $e$ be the maximal edge in the union of the cycles
    $C(A,e_1), \dots, C(A,e_j)$. If $e\in C$, then (since $C$ is a directed cycle) $e$ is parallel to $\{e_1, \dots, e_j\}$ in $C$, and hence if $e\in C(A,e_i)$, then $e$ is also parallel to $e_i$ in $C(A,e_i)$, and it is the maximal edge in this cycle. Thus, $e_i\in C$ is externally semi-active for $A$.
    If $e\notin C$, then $e$ needs to occur in at least two fundamental cycles, once with positive, and once with negative sign. If $e$ occurs with positive sign in $C(A,e_i)$, then its maximality in this cycle ensures that $e_i \in C$ is externally semi-active for $A$. 
    In both cases we found an externally semi-active element in $C$, in particular the externally semi-active elements cover each directed cycle.
\end{proof}

\begin{proof}[Proof of Theorem \ref{thm:parking_enum_deg_general_digraph}]
	We apply Theorem \ref{thm:greedoid_nonzero_terms}. If $G$ is $s$-root-connected, then the rank of the branching greedoid rooted at $s$ is equal to $|V(G)|-1$. For an edge set $S$, the rank of the branching greedoid of $G[E-S]$ remains $|V(G)|-1$ if and only if $G[E-S]$ is $s$-root-connected. On the other hand, Lemma \ref{lemma:constant_term_directed_graphs} tells us that $\lambda_{G[E-S],s}(0)=0$ is equivalent to $G[E-S]$ being acyclic. Hence the condition of Theorem \ref{thm:greedoid_nonzero_terms} indeed gives the condition of Theorem \ref{thm:parking_enum_deg_general_digraph} for branching greedoids of root-connected digraphs.
\end{proof}

\bibliographystyle{plain}
\bibliography{Bernardi}

\end{document}